\documentclass[11pt]{amsart}
\usepackage{amssymb}
\usepackage{amscd}
\usepackage{color}
\usepackage{comment}
\usepackage[dvipsnames]{xcolor}
\usepackage{url}
\usepackage{hyperref}
\usepackage{tikz}
\usetikzlibrary{calc}
\usepackage{enumitem}

\newtheorem{theorem}{Theorem}[section]

\newtheorem{proposition}[theorem]{Proposition}
\newtheorem{corollary}[theorem]{Corollary}

\theoremstyle{definition}
\newtheorem{definition}[theorem]{Definition}
\newtheorem{example}[theorem]{Example}

\newtheorem*{definition*}{Definition}

\newenvironment{demode}
{\noindent {{\it Proof of }}}%
{\par \hfill \fbox{}}

\newtheorem*{theorem*}{Theorem}

\theoremstyle{remark}
\newtheorem{remark}[theorem]{Remark}

\numberwithin{equation}{section}

\newcommand{\norm}[1]{\left\|#1\right\|}

\DeclareMathOperator*{\Impart}{Im}

\newcommand{\ran}{\textnormal{ran}}

\newcommand{\supp}{\textnormal{supp}}
\newcommand{\pe}[1]{\langle#1\rangle}
\newcommand{\EL}{\mathcal{L}}

\newcommand{\C}{\mathbb{C}}

\newcommand{\N}{\mathbb{N}}
\newcommand{\D}{\mathbb{D}}
\newcommand{\T}{\mathbb{T}}
\newcommand{\PR}{\textnormal{Re}}
\newcommand{\F}{\mathcal{F}}

\newcommand{\al}{\alpha}

\newcommand{\Xint}{X_T(\overline{\inte(\gamma)})}
\newcommand{\Xext}{X_T(\overline{\exte(\gamma)})}
\newcommand{\Hint}{H_T(\overline{\inte(\gamma)})}
\newcommand{\Hext}{H_T(\overline{\exte(\gamma)})}
\newcommand{\sumn}{\sum_{n=1}^{\infty}}

\newcommand{\summ}{\sum_{m=1}^{\infty}}

\newcommand{\sumk}{\sum_{k=1}^{\infty}}

\newcommand{\y}{\textnormal{\textbf{y}}}
\newcommand{\ind}{\textnormal{ind}}

\newcommand{\R}{\mathbb{R}}

\newcommand{\x}{\textnormal{\textbf{x}} }
\newcommand{\sumi}{\sum_{i=1}^\infty}
\newcommand{\sumj}{\sum_{j=1}^\infty}
\newcommand {\conm}[1]{{\{#1\}'}}
\newcommand{\biconm}[1]{{\{#1\}''}}

\newcommand{\inte}{\textnormal{int}}
\newcommand{\exte}{\textnormal{ext}}

\begin{document}
	
	\title[Spectral cuts and unconventional functional calculi]{Spectral cuts and \\ unconventional functional calculi}

	\author{Eva A. Gallardo-Guti\'{e}rrez}
	\address{Eva A. Gallardo-Guti\'errez \newline
		Departamento de An\'alisis Matem\'atico y Matem\'atica Aplicada,\newline
		Facultad de Ciencias Matem\'aticas,
		\newline Universidad Complutense de
		Madrid, \newline
		Plaza de Ciencias N$^{\underbar{\Tiny o}}$ 3, 28040 Madrid,  Spain
		\newline
		and Instituto de Ciencias Matem\'aticas ICMAT (CSIC-UAM-UC3M-UCM),
		\newline Madrid,  Spain } \email{eva.gallardo@mat.ucm.es}

	\author[F. Javier González-Doña]{F. Javier González-Doña}
\address{F. Javier González-Doña
\newline  Departamento de Matemática Aplicada II and IMUS, \newline Escuela Técnica Superior de Ingeniería de Ingeniería,\newline
Universidad de Sevilla,\newline
Camino de los Descubrimientos s/n, 41092, Seville, Spain}
\email{fgonzalez13@us.es}

	\thanks{Both authors are partially supported by Plan Nacional  I+D grant no. PID2022-137294NB-I00, Spain,
		the Spanish Ministry of Science and Innovation, through the ``Severo Ochoa Programme for Centres of Excellence in R\&D'' CEX2023-
		001347 and from the Spanish National Research Council, through the ``Ayuda extraordinaria a Centros de Excelencia Severo Ochoa'' (20205CEX001).}
	
	\subjclass[2020]{Primary 47A11,47A15,47A25}
	
	\date{March 2026}
	
	\keywords{Spectral cuts, Local Spectral Theory, idempotent operators}

	\begin{abstract}
In this work, we prove that linear bounded operators $T$ on a Banach space $X$ allowing spectral cuts along rectifiable Jordan curves meeting their spectrum are related to classes of operators admitting an \emph{unconventional functional calculus}. We identify several such classes and address the consequences regarding the existence of non-trivial closed invariant subspaces, extending previous results of Chalendar \cite{Chalendar1996, Chalendar1997, Chalendar1998}. Furthermore, we establish that every operator belonging to a broad subclass of compact perturbations of diagonalizable normal operators on separable Hilbert spaces — namely, trace-class perturbations — possesses an unconventional functional calculus and is \emph{super-decomposable}, thereby extending earlier results obtained by the authors in \cite{GG3}.
\end{abstract}
	
	\maketitle

	\section{Introduction}

	Let $T$ be a bounded linear operator on an infinite-dimensional complex Banach space $X$. A classical theorem of Riesz states that, in case the spectrum of $T$ is disconnected, one can explicitly write an  invariant projection for $T$ as follows:
	$$
	P = \frac{1}{2\pi i} \int_{\gamma} (zI - T)^{-1} \, dz,
	$$
	where $I$ stands for the identity operator and $\gamma$ is a properly chosen loop. Clearly, the range of $P$ provides a non-trivial closed invariant subspace for $T$. Indeed, a hyperinvariant one (a subspace invariant under every operator in the commutant of $T$).
	
	\smallskip
	
	In the seventies, Stampfli \cite{Stampfli} obtained explicit results regarding this construction when the spectrum of $T$ is connected but the loop $\gamma $ can be chosen so that it does not intersect the spectrum too often, and $(zI - T)^{-1} $ is not too large near those intersections. In general, one does not obtain a projection, but non-zero operators $S_1, S_2$ in the double commutant of $T$ such that $S_1 S_2 = 0$, which suffices for the existence of non-trivial closed (hyper)-invariant subspaces.

	\smallskip
	
	In this respect, results on classes of operators that have a spectral decomposition of the underlying space have been closely related to the study of operators that have some sort of functional calculus, and these two aspects seem to be strongly related. The example of \emph{spectral operators} in the sense of Dunford \cite{Dunford and Schwarz} and
	their generalization, \emph{decomposable operators} introduced by Foia\c{s} \cite{FOIAS}, are probably the most illustrative cases in this sense.
	
	\smallskip
	
	The aim of this work is driven by such a relation, allowing us to identify classes of operators which admit an \emph{unconventional functional calculus}. Among those classes, we will find operators satisfying a suitable local resolvent growth condition along a curve $\gamma$  and compact perturbations of scalar-type spectral operators.
Various subclasses of such operators have been studied previously, both in classical works (\cite{LjubicMacaev1965}, \cite{Apostol1971}, \cite{Kitano1973}) and in more recent contributions (see \cite{Chalendar1996, Chalendar1997, Chalendar1998}, \cite{FX12}, \cite{AA}, \cite{PutinarYakubovich2021}, \cite{GG2, GG3}), in connection with the existence of non-trivial closed hyperinvariant subspaces and decomposability.

	\smallskip

Our starting point is the concept of a \emph{plain spectral cut} of $T$ along a rectifiable Jordan curve. In order to introduce it, recall that an operator $T$ has the single valued extension property (SVEP) if for every open set $U$ in the complex plane $\mathbb{C}$, the continuous linear mapping $T_U $ defined on the space of $X$-valued holomorphic functions $\mathcal{H}(U; X)$ by
	$$
	T_U f(z)= (zI - T) f(z) \quad  (f \in \mathcal{H}(U; X), \, z \in U),
	$$
	is injective. The \emph{local resolvent set} $\rho_T(x)$ at $x\in X$ consists  of all $z \in \mathbb{C}$ such that there exists an open neighborhood $U$ of $z$ and a function $f \in \mathcal{H}(U; X) $ satisfying
	$$
	(zI - T) f(z) \equiv x \quad \text{on } U.
	$$
	Its complement, $ \sigma_T(x) := \mathbb{C} \setminus \rho_T(x) $, is called \emph{the local spectrum of} $ T $ at $ x $. For sets $ \Omega \subseteq   \mathbb{C}$, the linear manifolds
	$$
	X_T(\Omega)= \{ x \in X : \sigma_T(x) \subseteq   \Omega \},
	$$
	are the \emph{spectral subspaces} of $T$ associated with $\Omega$. It is worth noticing that $X_T(\Omega)$ is a linear manifold that is hyperinvariant for $T$ but not necessarily closed even for closed subsets (see \cite[Chapter 2]{AIENA-book} for instance).

\medskip

Given linear manifolds $M$ and $N$ of $X$, we write $X=M\dotplus N$ to denote their algebraic direct sum, meaning that every $x\in X$ can be uniquely decomposed as $x=m+n$, with $m\in M$ and $n\in N$. Whenever $M$ and $N$ are closed subspaces, the previous sum becomes a topological direct sum due to the Open Mapping Theorem, and in such a case we denote it by $X=M\oplus N$.

\medskip

\begin{definition*} \emph{Let $T$ be a linear bounded operator on $X$ with the SVEP and $\gamma$ a rectifiable Jordan curve. The operator $T$ admits a \textbf{plain spectral cut along $\gamma$} if both spectral subspaces ${X_T(\overline{\inte(\gamma)})}$ and $X_T(\overline{\exte(\gamma)})$ are non-trivial closed subspaces and $X$ is the topological direct sum
\begin{equation}\label{direct sum}
X = X_T(\overline{\inte(\gamma)}) \oplus X_T(\overline{\exte(\gamma)}).
\end{equation}}
\end{definition*}
\medskip

\noindent Here, $\inte(\gamma)$ denotes the \emph{interior of $\gamma$}, namely, those complex numbers $z\in \mathbb{C}$ with index $\ind_\gamma(z)$ with respect to $\gamma$   equal to $\pm 1$, while $\exte(\gamma)$ stands for the \emph{exterior of $\gamma$} (zero index with respect to $\gamma$).

\medskip

Note that, if $T$ admits a plain spectral cut along a curve $\gamma$, then $X_T(\gamma) = \{0\}$. Also note that if $\tau$ is another rectifiable Jordan curve such that $\sigma(T)\cap \gamma = \sigma(T)\cap \tau\neq \varnothing$, then $\sigma(T)\cap \overline{\inte(\tau)}$ coincides either with $\sigma(T)\cap \overline{\exte(\gamma)}$ or with $\sigma(T)\cap \overline{\inte(\gamma)}$, and the same holds for $\sigma(T)\cap \overline{\exte(\tau)}$. Accordingly, $T$ also admits a plain spectral cut along $\tau$.

\smallskip

Before going further, a few words are in order. First, the assumption of non-triviality of both subspaces  $X_T(\overline{\inte(\gamma)})$ and $X_T(\overline{\exte(\gamma)})$  provides the perspective of interest since its absence may reduce the direct sum \eqref{direct sum} to known cases. Secondly,  Albrecht and Chevreau  introduced
in \cite[Definition 2.1]{AA} the concept of \emph{non-trivial clear spectral cut} along a rectifiable Jordan curve $\gamma$ for linear bounded operators $T$ on $X$ with the SVEP by imposing that $X$ is the topological direct sum
$$
X = \overline{X_T(\inte(\gamma))} \oplus \overline{X_T(\exte(\gamma))}
$$
and both  closed subspaces $\overline{X_T(\inte(\gamma))}$ and $\overline{X_T(\exte(\gamma))}$ are infinite dimensional. As we will show, the existence of plain spectral cuts for operators is further related to the existence of functional calculi.

\smallskip

Indeed, if $T$ is an operator with the SVEP, for every $x \in X$ there exists a unique holomorphic $X$-valued function $x_T : \rho_T(x) \to X$ such that
$$
	(zI - T) x_T(z) \equiv x \quad \text{on} \, \rho_T(x).
$$
	The function $x_T$ is called the \emph{local resolvent function} of $T$ at $x$.

\smallskip

Assume that $T$ admits a plain spectral cut along $\gamma$. Then, every $x$ belonging to the linear manifold $X_T(\inte(\gamma))\dotplus X_T(\exte(\gamma))$  can be expressed uniquely as $x = u + v $  with $u \in X_T(\inte(\gamma)) $ and $v \in X_T(\exte(\gamma))$. Moreover, $\sigma_T(x)\subseteq \sigma_T(u)\cup \sigma_T(v)$ \cite[Proposition 1.2.16]{LN00}. Observe that $\sigma_T(u)$ and $\sigma_T(v)$ do not intersect $\gamma$, so neither does $\sigma_T(x)$. As a consequence, the local resolvent functions $x_T(z), u_T(z)$ and $v_T(z)$ are well defined for all $z\in \gamma$, and the SVEP yields that $x_T(z) = u_T(z)+v_T(z)$. At this point, the Cauchy integral theorem along with the analytic functional calculus yield that
	\begin{eqnarray}\label{AA}
		\frac{1}{2\pi i} \int_\gamma x_T(z) \, dz &=& \frac{1}{2\pi i} \int_\gamma u_T(z) \, dz + \frac{1}{2\pi i} \int_\gamma v_T(z) \, dz \nonumber\\
		&= & \frac{1}{2\pi i} \int_{\{|z|=\|T\|+1\}} (zI- T)^{-1} u \, dz + 0 \nonumber\\
		&=& P_\gamma x
	\end{eqnarray}
where $P_\gamma$ stands for the corresponding continuous idempotent operator with range $X_T(\overline{\inte(\gamma)})$ and kernel $X_T(\overline{\exte(\gamma)})$ (see Proposition \ref{proposicion spectral idempotent}).
	
\smallskip

Equation \eqref{AA} was also observed in \cite[Remark 2.2]{AA} and will play a central role in what follows. It will allow us to introduce a consistent functional calculus for operators admitting plain spectral cuts along curves in Section~\ref{sec 2}, which we then extend to cycles in Section~\ref{sec 3}, where we also exhibit its connection with (super)decomposability.

\smallskip

In Section~\ref{sec 4} we show that operators satisfying a suitable local resolvent growth condition along a curve $\gamma$ admit plain spectral cuts, and that the associated unconventional functional calculus admits an explicit integral representation. Moreover, we prove that for a broad class of operators that may fail to admit a plain spectral cut, the same local resolvent growth condition still yields a weaker form of spectral cuts and, consequently, a corresponding unconventional functional calculus. As an application of these results, we obtain a criterion ensuring the existence of non-trivial closed hyperinvariant subspaces for the operator, extending previous results by Isabelle Chalendar \cite{Chalendar1996, Chalendar1997, Chalendar1998}.
	
\smallskip

In Section \ref{sec 5} we show that a broad subclass of compact perturbations of diagonalizable normal operators on separable Hilbert spaces—namely, trace-class perturbations—admits, for every operator in the class, an \emph{exhaustive cutting family of spectral cuts} (Definition \ref{definition exhaustive}). As a consequence, these operators possess an unconventional functional calculus and are \emph{super-decomposable}, extending earlier results obtained by the authors in \cite{GG3}.

\section{Plain spectral cuts and an unconventional functional calculus} \label{sec 2}

The main aim of this section is to introduce an unconventional functional calculus for operators admitting  plain spectral cuts along rectifiable Jordan curves inspired by the Borel functional calculus for normal operators in Hilbert spaces.

\smallskip

Throughout this section, $X$ will denote an infinite dimensional complex Banach space and $\mathcal{L}(X)$ the Banach algebra of the linear bounded operators acting on $X$. For each $T\in \mathcal{L}(X)$, $\sigma(T)$ will stand for the spectrum of $T$, namely, the set of all complex numbers $ z \in \mathbb{C} $ such that $ T -z I $ is not invertible in $X$.

\smallskip

The following  result characterizes operators $T$ admitting plain spectral cuts along $\gamma$ in terms of the existence of non-trivial idempotents in the bicommutant of $T$. Recall that the \emph{commutant} of $T$ is defined by
$\{T\}'= \{ A \in \mathcal{L}(X) : AT = TA \},$ and the \emph{double commutant} or \emph{bicommutant} of $T$ is the commutant of $\{T\}'$, namely
$$
\{T\}'' = \{ B \in \mathcal{L}(X) : BA = AB \text{ for all } A \in \{T\}' \}.
$$

\begin{proposition}\label{proposicion spectral idempotent}
		Let $T$ be a linear bounded operator on $X$ with the SVEP and  $\gamma$ a rectifiable Jordan curve. $T$ admits a plain spectral cut along $\gamma$ if and only if there exists a non-trivial idempotent $P_\gamma \in \biconm{T}$ such that
        $$\ran(P_\gamma) = X_T(\overline{\inte(\gamma)}), \qquad \ker(P_\gamma) =  X_T(\overline{\exte(\gamma)}).$$
In such a case, $\sigma(T\mid_{\ran(P_\gamma)})\subseteq   \overline{\inte(\gamma)}\cap\sigma(T)$ and $\sigma(T\mid_{\ker(P_\gamma)})\subseteq   \overline{\exte(\gamma)}\cap\sigma(T)$.
\end{proposition}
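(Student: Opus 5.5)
The plan is to prove the two implications separately and then read off the spectral inclusions from standard local spectral theory (as in \cite{LN00} or \cite[Chapter 2]{AIENA-book}).

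\emph{($\Leftarrow$).} Suppose $P_\gamma\in\biconm{T}$ is a non-trivial idempotent with the stated range and kernel. The range and kernel of a continuous idempotent are closed, and $X=\ran(P_\gamma)\oplus\ker(P_\gamma)$ is a topological direct sum. Since $P_\gamma$ is neither $0$ nor $I$, both summands are non-trivial. This is exactly the definition of a plain spectral cut along $\gamma$.

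\emph{($\Rightarrow$).} Let $P_\gamma$ be the projection onto $M:=\Xint$ along $N:=\Xext$; it is bounded because the sum is topological, and non-trivial because both summands are. The task is to show $P_\gamma\in\biconm{T}$. Take $A\in\conm{T}$. For $x\in X$ with local resolvent function $x_T$, the identity $(zI-T)Ax_T(z)=Ax$ shows that $Ax_T$ is an analytic local resolvent for $Ax$ on $\rho_T(x)$. Hence $\sigma_T(Ax)\subseteq\sigma_T(x)$, and every spectral subspace $X_T(F)$ is $A$-invariant. So $AM\subseteq M$ and $AN\subseteq N$. Writing $x=m+n$ with $m\in M$, $n\in N$, we get $P_\gamma Ax=Am=AP_\gamma x$. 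Thus $P_\gamma$ commutes with every element of $\conm{T}$.

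\emph{Spectral inclusions.} For $T_1:=T\mid_M$ I would use the standard fact that if $X_T(F)$ is closed, with $F$ closed and $T$ having the SVEP, then $\sigma(T\mid_{X_T(F)})\subseteq F\cap\sigma(T)$.
\begin{itemize}[label={}]
\item The inclusion in $\sigma(T)$ comes from the decomposition $X=M\oplus N$ into $T$-invariant closed subspaces. It gives $\sigma(T)=\sigma(T\mid_M)\cup\sigma(T\mid_N)$, since $zI-T$ is invertible iff both restrictions are; the inverse commutes with $P_\gamma$, which lies in $\biconm{T}\subseteq\conm{T}$.
\item For $\sigma(T_1)\subseteq\overline{\inte(\gamma)}$, take $\lambda\notin\overline{\inte(\gamma)}$. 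Injectivity of $\lambda I-T_1$: if $Tx=\lambda x$ with $x\in M$, then $\sigma_T(x)\subseteq\{\lambda\}$, which together with $\sigma_T(x)\subseteq\overline{\inte(\gamma)}$ forces $\sigma_T(x)=\varnothing$, so $x=0$ by the SVEP. Surjectivity: for $y\in M$, the function $y_T(\lambda)$ is defined because $\lambda\in\rho_T(y)$, and $\sigma_T(y_T(\lambda))=\sigma_T(y)$, so $y_T(\lambda)\in M$ solves $(\lambda I-T_1)u=y$.
\end{itemize}
The same argument gives $\sigma(T\mid_N)\subseteq\overline{\exte(\gamma)}\cap\sigma(T)$.

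\emph{Main obstacle.} The delicate step is the local spectral identity $\sigma_T(y_T(\lambda))=\sigma_T(y)$. I would prove it directly by considering $w(\mu):=(y_T(\lambda)-y_T(\mu))/(\mu-\lambda)$, extended analytically at $\mu=\lambda$. This $w$ solves $(\mu I-T)w(\mu)=y_T(\lambda)$ on $\rho_T(y)$, which gives $\rho_T(y)\subseteq\rho_T(y_T(\lambda))$; that inclusion is all that is needed. Once this holds, bijectivity of $\lambda I-T_1$ on the Banach space $M$ yields invertibility by the open mapping theorem.
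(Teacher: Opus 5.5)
Your proof is correct and follows the paper's argument. Both directions go exactly as in the paper: hyperinvariance of the spectral subspaces puts the projection in $\biconm{T}$, and conversely a bounded non-trivial idempotent gives the topological direct sum. The only difference is that the paper cites \cite[Proposition 1.2.20]{LN00} for the spectral inclusions, whereas you prove that fact directly via the SVEP and the inclusion $\rho_T(y)\subseteq\rho_T(y_T(\lambda))$, which is a valid self-contained argument.
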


\begin{proof}
Assume that $T$ admits a plain spectral cut along $\gamma$. Equation \eqref{direct sum} implies that the bounded operator
$$P_\gamma = (I\mid_{X_T(\overline{\inte(\gamma)})}) \oplus(0\mid_{X_T(\overline{\exte(\gamma)})}),$$
belongs to $\biconm{T}$ since both $X_T(\overline{\inte(\gamma)})$ and $X_T(\overline{\exte(\gamma)})$ are hyperinvariant subspaces for $T$.

\smallskip

On the other hand, if such a non-trivial idempotent $P_\gamma$ exists, it is clear that \eqref{direct sum} holds, so $T$ admits a plain spectral cut. Finally, the statement for the spectrum of $T\mid_{\ran(P_\gamma)}$ and $T\mid_{\ker(P_\gamma)}$ follows directly by \cite[Proposition 1.2.20]{LN00}.
\end{proof}

\begin{remark}\label{remark 1}
It turns out that $P_\gamma$ is a \textit{spectral idempotent} associated with $\overline{\inte(\gamma)}$ in the sense of \cite{FJKP11} (see also \cite[Definition 3.1]{GG2}). An equivalent property holds for $Q_\gamma= I-P_\gamma$ and $\overline{\exte(\gamma)}$.
\end{remark}

\smallskip

The following result yields explicit spectral cuts under suitable conditions on the local spectral subspaces of $T$.

\smallskip

\begin{theorem}\label{teorema plain spectral cut local resolvent}
Let $T$ be a bounded linear operator on $X$ with the SVEP, and let $\gamma$ be a rectifiable Jordan curve such that both $\inte(\gamma)$ and $\exte(\gamma)$ intersect $\sigma(T)$. Suppose that $\Xint$ and $\Xext$ are closed, that the linear manifold $X_T(\inte(\gamma)) + X_T(\exte(\gamma))$ is dense in $X$, and that there exists $C>0$ such that
\begin{equation}\label{acotacion norma x_T}
    \norm{\frac{1}{2\pi i}\int_\gamma x_T(z)\,dz} \le C\norm{x}
\end{equation}
for all $x \in X_T(\inte(\gamma)) + X_T(\exte(\gamma))$. Then $T$ admits a non-trivial plain spectral cut along $\gamma$ if and only if $X_T(\gamma)=\{0\}$.
\end{theorem}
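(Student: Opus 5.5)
The necessity is immediate. If $T$ admits a plain spectral cut along $\gamma$, take $x\in X_T(\gamma)$ and decompose $x=u+v$ with $u\in\Xint$ and $v\in\Xext$, using \eqref{direct sum}. Since $X_T(\gamma)\subseteq \Xint\cap\Xext$, uniqueness of the decomposition gives both $x=x+0$ and $x=0+x$, so $x=0$. Hence $X_T(\gamma)=\{0\}$, as already noted after the definition.

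For sufficiency, assume $X_T(\gamma)=\{0\}$. The plan is to build $P_\gamma$ first on the dense manifold $D:=X_T(\inte(\gamma))+X_T(\exte(\gamma))$ and then extend it by continuity. For $x\in D$ set $Px=\frac{1}{2\pi i}\int_\gamma x_T(z)\,dz$. This is well defined because $\sigma_T(x)\cap\gamma=\varnothing$ by \cite[Proposition 1.2.16]{LN00}, and $P$ is linear on $D$ by the SVEP, since $(x+y)_T=x_T+y_T$ on the common resolvent set. By \eqref{acotacion norma x_T}, $P$ extends to a bounded operator on $X$. The computation \eqref{AA} shows that for $x=u+v$ with $u\in X_T(\inte(\gamma))$ and $v\in X_T(\exte(\gamma))$ we have $Px=u$. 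Note that \eqref{AA} only uses the splitting of $x$ and the Cauchy theorem, not the cut itself. In particular $Pu=u$ and $Pv=0$, so $P^2=P$ on $D$ and therefore on $X$. Moreover $P$ commutes with every $A\in\{T\}'$, because such $A$ preserves the spectral manifolds and $A x_T=(Ax)_T$.

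Next comes the identification of the range and kernel. The inclusions $\ran P\subseteq \Xint$ and $\ker P\supseteq$ \ldots need care. First, $P(D)=X_T(\inte(\gamma))\subseteq\Xint$, and since $\Xint$ is closed, $\ran P=\overline{P(D)}\subseteq\Xint$. Likewise $(I-P)(D)\subseteq\Xext$, so $\ran(I-P)=\ker P\subseteq \Xext$. For the reverse inclusions, take $y\in\Xint$ and write $y=Py+(I-P)y$. Here $(I-P)y\in\Xext$ and also $(I-P)y=y-Py\in\Xint$, since both summands lie in the closed linear manifold $\Xint$. Thus $(I-P)y\in\Xint\cap\Xext=X_T(\overline{\inte(\gamma)}\cap\overline{\exte(\gamma)})=X_T(\gamma)=\{0\}$, so $y=Py\in\ran P$. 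The same argument gives $\Xext\subseteq\ker P$. Hence $X=\Xint\oplus\Xext$.

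Non-triviality follows from the hypothesis that $\inte(\gamma)$ and $\exte(\gamma)$ both meet $\sigma(T)$. If, say, $\Xext=\{0\}$, then $X=\Xint$, so by \cite[Proposition 1.2.20]{LN00} $\sigma(T)\subseteq\overline{\inte(\gamma)}$, contradicting $\exte(\gamma)\cap\sigma(T)\ne\varnothing$; the other case is symmetric. Proposition \ref{proposicion spectral idempotent} then concludes the proof.

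The main obstacle is the kernel/range identification. It relies on $\Xint\cap\Xext=X_T(\gamma)$, which holds for spectral subspaces of an operator with the SVEP because $\sigma_T(x)$ must lie in both closed sets. Within that step, the key point is that $Py$ is defined by continuity rather than by the integral formula, so closedness of $\Xint$ is essential. The non-triviality step may also need the closedness and the spectral inclusion $\sigma(T|_{X_T(F)})\subseteq F$; if that inclusion is unavailable, I would argue via the Riesz-type behaviour of the local resolvent instead.
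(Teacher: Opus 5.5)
Your proposal is correct and takes essentially the paper's route: define $\frac{1}{2\pi i}\int_\gamma x_T(z)\,dz$ on the dense manifold $X_T(\inte(\gamma))+X_T(\exte(\gamma))$, identify it via \eqref{AA} as the idempotent $u+v\mapsto u$, extend it by continuity, and then use the closedness of the two spectral subspaces together with $X_T(\overline{\inte(\gamma)})\cap X_T(\overline{\exte(\gamma)})=X_T(\gamma)=\{0\}$. Your explicit identification of the range and kernel, and your check that both subspaces are non-trivial (which the paper's proof leaves implicit, though it argues the same way in the proof of Theorem \ref{teorema punto expuesto}), fill in details rather than change the approach.
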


\smallskip

\begin{proof}
Recall that $\Xint \cap \Xext = X_T(\gamma)$. Hence, if $X_T(\gamma)\neq\{0\}$, then $T$ cannot admit a plain spectral cut along $\gamma$. Assume therefore that $X_T(\gamma)=\{0\}$ and let us show that $T$ admits the plain spectral cut. It suffices to prove that
$$
X = X_T(\overline{\inte(\gamma)}) + X_T(\overline{\exte(\gamma)}).
$$

For simplicity, set $M = X_T(\inte(\gamma)) + X_T(\exte(\gamma))$.  For each $x \in M$ there exist unique vectors $u \in X_T(\inte(\gamma))$ and $v \in X_T(\exte(\gamma))$ such that $x = u + v$. As noted in the discussion preceding \eqref{AA},
$$\sigma_T(x) \subseteq \sigma_T(u) \cup \sigma_T(v),$$
where $\sigma_T(u) \subset \inte(\gamma)$ and $\sigma_T(v) \subset \exte(\gamma)$ are compact. Hence $x_T(z)$, $u_T(z)$ and $v_T(z)$ are well defined for all $z \in \gamma$, and
$x_T = u_T + v_T.$

Define a linear operator $J_\gamma : M \to X_T(\inte(\gamma))$ by
\begin{equation}\label{operador J}
J_\gamma x = \frac{1}{2\pi i}\int_\gamma x_T(z)\,dz .
\end{equation}
By \eqref{AA}, $J_\gamma x = u$ and, consequently, $(I - J_\gamma)x = v \in X_T(\exte(\gamma)).$
It follows that
$$
J_\gamma(I - J_\gamma)x = (I - J_\gamma)J_\gamma x = 0,
$$
and therefore $J_\gamma$ is idempotent. Now, \eqref{acotacion norma x_T} and the density of $M$ imply that $J_\gamma$ extends to a bounded linear idempotent operator $J_\gamma : X \to X$. Since $X_T(\inte(\gamma)) \subseteq X_T(\overline{\inte(\gamma)})$ and the latter subspace is closed, we have
$$
\ran(J_\gamma) \subseteq \Xint,
\qquad
\ker(J_\gamma) \subseteq \Xext.
$$
Hence, for every $x \in X$,
$$
x = J_\gamma x + (I-J_\gamma)x \in \Xint + \Xext,
$$
which completes the proof.
\end{proof}

\begin{remark}A few remarks concerning Theorem \ref{teorema plain spectral cut local resolvent} are in order:
\begin{enumerate}
\item [(i)] A careful reading of the proof of Theorem \ref{teorema plain spectral cut local resolvent} shows that the idempotent operator $P_\gamma$ associated with the plain spectral cut is the operator $J_\gamma$ defined by \ref{operador J}. Consequently,
        $$
        P_\gamma x = \frac{1}{2\pi i}\int_\gamma x_T(z)dz
        $$
        for all $x\in X_T(\inte(\gamma))+X_T(\exte(\gamma))$.

\item [(ii)]The linear manifold $M = X_T(\inte(\gamma)) + X_T(\exte(\gamma))$ consists precisely of those vectors $x \in X$ for which $x_T(z)$ is well defined for all $z \in \gamma$. Indeed, this condition is equivalent to $\gamma \subseteq \rho_T(x)$, or equivalently $\gamma \cap \sigma_T(x)=\varnothing$. Hence
$$
\sigma_T(x) = K_1 \cup K_2,
$$
where $K_1 \subseteq \inte(\gamma)$ and $K_2 \subseteq \exte(\gamma)$ are disjoint compact sets. Consequently,
$$
x \in X_T(K_1 \cup K_2) = X_T(K_1)+X_T(K_2)
\subseteq X_T(\inte(\gamma))+X_T(\exte(\gamma)).
$$
The reverse inclusion was discussed prior to \eqref{AA}.

\item [(iii)] The hypotheses in Theorem \ref{teorema plain spectral cut local resolvent} require a priori information on the linear manifold $X_T(\inte(\gamma)) + X_T(\exte(\gamma))$. In Section \ref{sec 4}, we consider similar constructions of plain spectral cuts, replacing these assumptions with certain conditions on $\sigma(T)$ and the resolvent $(z-T)^{-1}$ along $\gamma$.
    \end{enumerate}
\end{remark}

\noindent The next definition introduces $f_\gamma(T)$  when $T$ admits a plain spectral cut along $\gamma$ and $f$ is holomorphic in $\overline{\inte(\gamma)}$.

\smallskip
	
\begin{definition}\label{definicion unconventional functional calculus}
		Let $T$ be a linear bounded operator on $X$ with the SVEP and $\gamma$ a rectifiable Jordan curve. Assume that $T$ admits a plain spectral cut along $\gamma$. If $U\subseteq  \C$ is an open subset containing $\overline{\inte(\gamma)}$ and $f: U \rightarrow \C$ is a holomorphic function, the operator $f_\gamma(T)$ is defined by
		\begin{equation}\label{expresion f_Gamma}
			f_\gamma(T) = f(T\mid_{\ran (P_\gamma)})P_\gamma,
		\end{equation}
		where $f(T\mid_{\ran (P_\gamma)})$ is determined by the Dunford functional calculus.
	\end{definition}

\smallskip
	
The next result shows that Definition \ref{definicion unconventional functional calculus} is consistent.

\smallskip
	
\begin{theorem}\label{teorema calculo bien definido}
		Let $T$ be a linear bounded operator on $X$ with the SVEP, $\gamma$ a rectifiable Jordan curve and assume that $T$ admits a plain spectral cut along $\gamma$. Let $U\subseteq  \C$ be an open subset containing $\overline{\inte(\gamma)}$ and $f: U \rightarrow \C$ be a holomorphic function. Then, $f_\gamma(T)$ is a linear bounded operator acting on $X$ with range $\ran(f_\gamma(T))\subseteq   {X_T(\overline{\inte(\gamma)})}$. Moreover, if $U$ contains $\sigma(T)$, then
		$$
		f_\gamma(T) = f(T)P_\gamma.
		$$
\end{theorem}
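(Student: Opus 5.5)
The plan is to first make sure every object in \eqref{expresion f_Gamma} is well defined, and then to compare the two functional calculi through their Cauchy integral representations.

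\emph{Well-definedness and range.} Write $Y=\ran(P_\gamma)=\Xint$. By Proposition \ref{proposicion spectral idempotent}, $Y$ is a closed subspace invariant under $T$. Also by Proposition \ref{proposicion spectral idempotent}, $\sigma(T\mid_Y)\subseteq \overline{\inte(\gamma)}\cap\sigma(T)\subseteq U$. Hence the Dunford calculus $f(T\mid_Y)\in\mathcal{L}(Y)$ makes sense. Concretely,
$$
f(T\mid_Y)=\frac{1}{2\pi i}\int_\Gamma f(z)(zI-T\mid_Y)^{-1}\,dz,
$$
where $\Gamma$ is a cycle in $U$ surrounding $\sigma(T\mid_Y)$. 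Then $f_\gamma(T)=f(T\mid_Y)P_\gamma$ is a composition of the bounded map $P_\gamma\colon X\to Y$ with a bounded operator on $Y$, followed by the inclusion $Y\hookrightarrow X$. So $f_\gamma(T)$ is linear and bounded on $X$, and its range lies in $Y=\Xint$. One small point to note: the Dunford calculus does not depend on the choice of $\Gamma$, so $f_\gamma(T)$ depends only on $f$ and $P_\gamma$.

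\emph{The identity $f_\gamma(T)=f(T)P_\gamma$ when $\sigma(T)\subseteq U$.} Choose one cycle $\Gamma$ in $U$ surrounding $\sigma(T)$, with index $0$ outside $U$. Since $\sigma(T\mid_Y)\subseteq\sigma(T)$, the same $\Gamma$ also serves for $T\mid_Y$. For $z\in\Gamma$, $z\in\rho(T)$, and $Y$ is invariant under $T$. The key step is to show that $(zI-T)^{-1}$ maps $Y$ into $Y$ and restricts there to $(zI-T\mid_Y)^{-1}$. This holds because $(zI-T)^{-1}\in\{T\}''$ commutes with $P_\gamma$ (alternatively, $Y$ is hyperinvariant and $(zI-T)^{-1}\in\{T\}'$). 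Thus for $y\in Y$ we have $(zI-T)^{-1}y\in Y$, and $(zI-T\mid_Y)(zI-T)^{-1}y=y$. Because $z\in\rho(T\mid_Y)$, this identifies the inverse.

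Now fix $x\in X$ and set $y=P_\gamma x\in Y$. Then
$$
f_\gamma(T)x=\frac{1}{2\pi i}\int_\Gamma f(z)(zI-T\mid_Y)^{-1}y\,dz=\frac{1}{2\pi i}\int_\Gamma f(z)(zI-T)^{-1}y\,dz=f(T)P_\gamma x.
$$
Here we use that the $Y$-valued integral coincides with the $X$-valued one, since $Y$ is closed and the inclusion is continuous.

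The only genuine point to watch is the choice of a common contour $\Gamma$ that is admissible for both $T$ and $T\mid_Y$. This is where the hypothesis $\sigma(T)\subseteq U$ enters, together with the spectral inclusion from Proposition \ref{proposicion spectral idempotent}. Everything else is routine.
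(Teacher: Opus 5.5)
Your proof is correct and follows the same route as the paper: well-definedness and the range inclusion come from the spectral inclusion $\sigma(T\mid_{\ran(P_\gamma)})\subseteq\overline{\inte(\gamma)}\cap\sigma(T)$ of Proposition \ref{proposicion spectral idempotent}, and the identity $f_\gamma(T)=f(T)P_\gamma$ comes from $f(T\mid_{\ran(P_\gamma)})$ and $f(T)$ agreeing on $\ran(P_\gamma)$. The paper simply calls that agreement clear, and your argument with a common contour and the restricted resolvent $(zI-T)^{-1}\mid_{\ran(P_\gamma)}=(zI-T\mid_{\ran(P_\gamma)})^{-1}$ supplies the justification it omits.
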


\begin{proof}
		First, let us show that  $f_\gamma(T)$ is well defined. Since $\ran(P_\gamma) = {X_T(\overline{\inte(\gamma)})}$, by Proposition \ref{proposicion spectral idempotent} $\sigma(T\mid_{\ran(P_\gamma)})$ is a compact subset contained in $\overline{\inte(\gamma)}\cap\sigma(T).$ Thus, $U$ is an open subset containing \mbox{$\sigma(T\mid_{\ran(P_\gamma)})$} and therefore, $f(T\mid_{\ran(P_\gamma)})$ is a linear bounded operator acting on $\ran(P_\gamma)$.
As a consequence,
$$f_\gamma(T)= f(T\mid_{\ran(P_\gamma)})P_\gamma$$
is a linear bounded operator on $X$ with range contained in $\ran(P_\gamma) = X_T(\overline{\inte(\gamma)})$.

\smallskip		

Finally, if $U$ contains $\sigma(T)$, clearly $f(T) \in \EL(X)$ and
		$$f(T\mid_{\ran(P_\gamma)})x = f(T)x$$
		for all $x\in \ran(P_\gamma)$, which concludes the proof.
	\end{proof}

Theorem \ref{teorema calculo bien definido} allows us to consider a \emph{functional calculus for $T$ associated with $\gamma$}, and the next result states that is related to \eqref{AA}:
	
\begin{theorem}\label{teorema expresion integral calculo funcional inconvencional}
		Let $T$ be a linear bounded operator on $X$ with the SVEP, $\gamma$ a rectifiable Jordan curve and assume that $T$ admits a plain spectral cut along $\gamma$. Let $U\subseteq  \C$ be an open subset containing $\overline{\inte(\gamma)}$ and $f: U \rightarrow \C$ be a holomorphic function. For every $x\in X_T(\inte(\gamma))\dotplus X_T(\exte(\gamma))$
		\begin{equation}\label{expresion integral calculo funcional inconvencional}
			f_\gamma(T) x = \frac{1}{2\pi i}\int_\gamma f(z) x_T(z)dz.
		\end{equation}
	\end{theorem}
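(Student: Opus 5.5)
Decompose $x\in X_T(\inte(\gamma))\dotplus X_T(\exte(\gamma))$ as $x=u+v$ with $u\in X_T(\inte(\gamma))$ and $v\in X_T(\exte(\gamma))$. As in the discussion preceding \eqref{AA}, $x_T=u_T+v_T$ on $\gamma$. Since $u\in\Xint=\ran(P_\gamma)$ and $v\in\Xext=\ker(P_\gamma)$, we have $P_\gamma x=u$, so $f_\gamma(T)x=f(T\mid_{\ran(P_\gamma)})u$. The claim therefore splits into two integral identities:
$$\frac{1}{2\pi i}\int_\gamma f(z)u_T(z)\,dz=f(T\mid_{\ran(P_\gamma)})u, \qquad \frac{1}{2\pi i}\int_\gamma f(z)v_T(z)\,dz=0.$$

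For the second identity, note that $\sigma_T(v)$ is a compact subset of $\exte(\gamma)$. Hence $v_T$ is holomorphic on $\rho_T(v)$, an open set containing $\overline{\inte(\gamma)}$, and $f$ is holomorphic on $U\supseteq\overline{\inte(\gamma)}$. Then $fv_T$ is holomorphic on the open set $U\cap\rho_T(v)$, which contains $\overline{\inte(\gamma)}$. Cauchy's theorem for a rectifiable Jordan curve whose closed interior lies in the domain of holomorphy gives the vanishing integral. One can first pass to a slightly larger smooth curve inside $U\cap\rho_T(v)$ if a more elementary version of Cauchy's theorem is preferred.

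For the first identity, let $Y=\ran(P_\gamma)=\Xint$ and $S=T\mid_Y$. Since $Y$ is a closed invariant subspace, the local resolvent of $T$ at $u$ takes values in $Y$ and coincides with $u_S$: for $\sigma_T(u)$ with $Y=X_T(F)$ closed, one has $\sigma_S(u)=\sigma_T(u)$ and $u_T=u_S$ (standard, \cite[Chapter 1]{LN00}). The key step is to replace $\gamma$ by a contour on which the global resolvent of $S$ is available. Proposition \ref{proposicion spectral idempotent} gives $\sigma(S)\subseteq\overline{\inte(\gamma)}$, so on $\rho(S)$ the function $u_S(z)$ equals $(zI-S)^{-1}u$.

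Now choose a cycle $\Gamma$ in $U$ that surrounds $\sigma(S)$ and lies in $\rho(S)$; it must therefore run partly outside $\gamma$. The function $f u_S$ is holomorphic on $U\cap\rho_S(u)$. The region between $\gamma$ and $\Gamma$ lies in $U\cap\rho_S(u)$ provided $\Gamma$ hugs $\overline{\inte(\gamma)}$ closely. This holds because $\sigma_S(u)\subset\inte(\gamma)$ is compact: the parts of $\sigma(S)$ on $\gamma$ are not in $\sigma_S(u)$. By Cauchy's theorem,
$$\int_\gamma fu_S=\int_\Gamma f(z)(zI-S)^{-1}u\,dz=2\pi i\,f(S)u.$$

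The main obstacle is this deformation step. $\gamma$ may meet $\sigma(S)$, so a homologous contour must be chosen with care. One clean way to handle it: pick a smooth Jordan curve $\tau$ in $U$ that surrounds $\overline{\inte(\gamma)}$. Then $\gamma-\tau$ is homologous to zero in the open set $\rho_S(u)\cap U$, because every point outside that set, in particular every point of $\sigma_S(u)$ and every point off $U$, has equal winding numbers with respect to $\gamma$ and $\tau$. Since $\tau\subset\rho(S)$, the Dunford integral over $\tau$ gives $f(S)u$, which finishes the proof.
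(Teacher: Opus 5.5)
Your proof is correct and follows the paper's route: split $x=P_\gamma x+Q_\gamma x$, make the $Q_\gamma x$ term vanish by Cauchy's theorem, and identify the $P_\gamma x$ term with the Riesz--Dunford integral for $T\mid_{\ran(P_\gamma)}$. Your deformation to a contour $\tau\subset\rho(T\mid_{\ran(P_\gamma)})$ is more careful than the paper's proof, which writes $(zI-T\mid_{\ran(P_\gamma)})^{-1}P_\gamma x$ on all of $\gamma$ and applies the Dunford formula directly on $\gamma$, even though $\gamma$ may meet $\sigma(T\mid_{\ran(P_\gamma)})$. The deformation is justified because $u_T$ is holomorphic near $\gamma\cup\exte(\gamma)$, since $\sigma_T(u)$ is a compact subset of $\inte(\gamma)$. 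One small fix: state that $\tau$ is chosen with $\overline{\inte(\tau)}\subseteq U$ (or use a general cycle as in the Dunford calculus), so that the winding numbers really agree at points off $U$.
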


\smallskip

	\begin{proof}
Let $x\in X_T(\inte(\gamma))\dotplus X_T(\exte(\gamma))$ and note that $x= P_\gamma x + Q_\gamma x$, with $P_\gamma x\in X_T(\inte(\gamma))$ and $Q_\gamma x = (I-P_\gamma)x \in X_T(\exte(\gamma))$. As it was discussed regarding \eqref{AA}, $\gamma\subset \rho_T(x)$, so $x_T(z)$ is well defined for all $z\in \gamma$ and $x_T(z) = (P_\gamma x)_T(z) + (Q_\gamma x)_T(z)$. Consequently,
        $$
         \frac{1}{2\pi i}\int_\gamma f(z) x_T(z)dz= \frac{1}{2\pi i} \int_\gamma f(z) (P_\gamma x)_T(z) dz + \frac{1}{2\pi i} \int_\gamma f(z) (Q_\gamma x)_T(z) dz.
        $$
Let us show that
    $$
    \frac{1}{2\pi i} \int_\gamma f(z) (P_\gamma x)_T(z) dz = f_\gamma(T)x, \qquad \frac{1}{2\pi i} \int_\gamma f(z) (Q_\gamma x)_T(z) dz =0,
    $$
    and the statement of the theorem would follow.

Now, $\gamma \subseteq  \rho_T(P_\gamma x)$  for all $z\in \gamma$, so
$$(P_\gamma x)_T(z) = (T\mid_{\ran(P_\gamma)}-zI)^{-1}P_\gamma x.$$
Consequently,
		\begin{equation*}
			\begin{split}
				\frac{1}{2\pi i}\int_\gamma f(z) (P_\gamma x)_T(z) dz = \frac{1}{2\pi i} \int_\gamma f(z) (zI-T\mid_{\ran(P_\gamma)})^{-1}P_\gamma x dz.
			\end{split}
		\end{equation*}
Since $\gamma$ is a rectifiable Jordan curve surrounding $\sigma(T\mid_{\ran(P_\gamma)})$ and $f$ is holomorphic in an open set containing $\gamma$,
$$\frac{1}{2\pi i} \int_\gamma f(z) (zI-T\mid_{\ran(P_\gamma)})^{-1}P_\gamma x dz = f(T\mid_{\ran(P_\gamma)}) P_\gamma x = f_\gamma(T)x,$$
as claimed.

\smallskip

Finally,  recall that $\sigma_T(Q_\gamma x)$ is a closed subset contained in $\exte(\gamma)$. This means that $\gamma$ does not surround or intersect $\sigma_T(Q_\gamma x)$. Likewise,
$$(Q_\gamma x)_T(z) = (zI-T\mid_{\ran(Q_\gamma)})^{-1}Q_\gamma x$$
for all $z\in \gamma,$ so by Cauchy's Theorem
$$\frac{1}{2\pi i} \int_\gamma f(z)(Q_\gamma x)_T(z)dz = \frac{1}{2\pi i}\int_\gamma f(z) (T\mid_{\ran(Q_\gamma)}-zI)^{-1}Q_\gamma x = 0, $$
which concludes the proof.
\end{proof}

\smallskip

Next, we show that the constructed functional calculus associated with a curve satisfies the standard properties of a functional calculus:

\smallskip
	
\begin{theorem}\label{Teorema CF}
		Let $T$ be a linear bounded operator on $X$ with the SVEP, $\gamma$ be a rectifiable Jordan curve and assume that $T$ admits a plain spectral cut along $\gamma$. Let $U\subseteq  \C$ be an open subset containing $\overline{\inte(\gamma)}$ and $f, g$ be two holomorphic functions on $U$. Then:
\begin{enumerate}
			\item [\rm{(i)}] $1_\gamma(T) = P_\gamma$.
			\item [\rm{(ii)}] $(f+g)_\gamma(T) = f_\gamma(T)+g_\gamma(T).$
			\item [\rm{(iii)}] $(fg)_\gamma(T) = f_\gamma(T)g_\gamma(T). $
			\item [\rm{(iv)}] $\sigma(f_\gamma (T)) = f(\sigma(T\mid_{\ran(P_\gamma)}))\cup \{0\}\subseteq   f(\overline{\inte(\gamma)})\cup \{0\}.$
			\item [\rm{(v)}] If $(f_n)_{n\in \N}$ is a sequence of holomorphic functions on $U$ converging uniformly on compact subsets to $f$, then $(f_n)_\gamma(T) \rightarrow f_\gamma(T)$ in the operator norm.
			\item [\rm{(vi)}] If $\tilde{\gamma}$ is a rectifiable Jordan curve such that $T$ admits a non-trivial closed spectral cut through $\tilde{\gamma}$, $\inte(\gamma)\cap\inte(\tilde{\gamma}) \neq \varnothing$ and $\gamma \cap \sigma(T) = \tilde{\gamma}\cap \sigma(T)$,  then $f_\gamma(T) = f_{\tilde{\gamma}}(T).$
		\end{enumerate}
	\end{theorem}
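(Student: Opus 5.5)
The plan is to reduce every item to the Riesz--Dunford functional calculus for the restriction $T_1 := T\mid_{\ran(P_\gamma)}$. Write $X_1=\ran(P_\gamma)=\Xint$ and $X_2=\ker(P_\gamma)=\Xext$, so that $X=X_1\oplus X_2$ by \eqref{direct sum}. Relative to this decomposition, Definition \ref{definicion unconventional functional calculus} reads $f_\gamma(T)=f(T_1)\oplus 0$. By Proposition \ref{proposicion spectral idempotent}, $\sigma(T_1)\subseteq \overline{\inte(\gamma)}\cap\sigma(T)\subseteq U$, so the holomorphic calculus $f\mapsto f(T_1)$ on $\mathcal{H}(U)$ is available on $X_1$. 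It is an algebra homomorphism with $1(T_1)=I_{X_1}$, it satisfies the spectral mapping theorem, and it is continuous for uniform convergence on compact subsets of $U$.

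Items (i)--(iii) then follow directly from these properties. For (i), $1_\gamma(T)=I_{X_1}P_\gamma=P_\gamma$. For (ii), additivity of the Dunford calculus gives $(f+g)_\gamma(T)=f_\gamma(T)+g_\gamma(T)$. For (iii), since $P_\gamma$ is the identity on $X_1$ and $f(T_1)$ maps $X_1$ into $X_1$, we get $f_\gamma(T)g_\gamma(T)=f(T_1)P_\gamma g(T_1)P_\gamma=f(T_1)g(T_1)P_\gamma=(fg)(T_1)P_\gamma$.

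For (iv) I will use that the spectrum of a direct sum is the union of the spectra: $\sigma(f(T_1)\oplus 0)=\sigma(f(T_1))\cup\{0\}$. Here $0$ genuinely belongs to the spectrum because $X_2\neq\{0\}$, which is the non-triviality assumption in the definition of a plain spectral cut. Combining this with the spectral mapping theorem $\sigma(f(T_1))=f(\sigma(T_1))$ and with $\sigma(T_1)\subseteq\overline{\inte(\gamma)}$ gives (iv). For (v), choose a cycle $\Gamma\subset U$ surrounding $\sigma(T_1)$, for instance a contour close to $\gamma$ lying in $U\setminus\sigma(T_1)$. Since $\overline{\inte(\gamma)}$ is compact inside the open set $U$, such a contour exists. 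Then $\|(f_n)_\gamma(T)-f_\gamma(T)\|\le \frac{\mathrm{length}(\Gamma)}{2\pi}\sup_\Gamma|f_n-f|\,\sup_\Gamma\|(z-T_1)^{-1}\|\,\|P_\gamma\|$, which tends to $0$.

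The main obstacle is (vi). The natural route is to show $P_\gamma=P_{\tilde\gamma}$, so that both calculi restrict $T$ to the same subspace. Following the remark in the introduction, since $\gamma\cap\sigma(T)=\tilde\gamma\cap\sigma(T)$, the set $\sigma(T)\cap\overline{\inte(\tilde\gamma)}$ equals either $\sigma(T)\cap\overline{\inte(\gamma)}$ or $\sigma(T)\cap\overline{\exte(\gamma)}$. The hypothesis $\inte(\gamma)\cap\inte(\tilde\gamma)\ne\varnothing$ is what should single out the first alternative; if it does not do so directly, I would argue with the connected pieces of $\sigma(T)\setminus\gamma$. Because $\sigma_T(x)\subseteq\sigma(T)$, we have $X_T(F)=X_T(F\cap\sigma(T))$ for every set $F$. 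Hence $X_T(\overline{\inte(\tilde\gamma)})=X_T(\overline{\inte(\gamma)})$, and similarly for the exteriors. The two idempotents then have the same range and kernel, so they coincide. Finally, $f$ must be holomorphic on a neighbourhood of both closed interiors, or at least of $\sigma(T_1)$, for $f_{\tilde\gamma}(T)$ to make sense; given that, $f_\gamma(T)=f(T_1)P_\gamma=f_{\tilde\gamma}(T)$.
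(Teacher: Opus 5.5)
Your treatment of (i)--(v) is correct and follows the paper's route. Everything is reduced to the Riesz--Dunford calculus of $T\mid_{\ran(P_\gamma)}$. Item (iv) uses $\sigma(A\oplus 0)=\sigma(A)\cup\{0\}$ together with the spectral mapping theorem; your remark that $0$ genuinely lies in the spectrum because $\ker(P_\gamma)\neq\{0\}$ is a welcome bit of care. Your Cauchy-integral estimate is what the paper means when it says (v) follows ``straightforwardly''.

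The gap is in (vi), at the step where you conclude $\sigma(T)\cap\overline{\inte(\tilde\gamma)}=\sigma(T)\cap\overline{\inte(\gamma)}$. Neither ingredient you invoke delivers it.

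First, $\inte(\gamma)\cap\inte(\tilde\gamma)\neq\varnothing$ says nothing about where the two interiors meet $\sigma(T)$. Take $T=M_z$ on $L^2(A\cup B)$ with area measure, where $A=\overline{D}(-2,1/2)$ and $B=\overline{D}(2,1/2)$. Let $\gamma=\{|z+1|=1.8\}$ and $\tilde\gamma=\{|z-1|=1.8\}$. Both circles miss $\sigma(T)=A\cup B$, both interiors contain $0$, and $T$ admits plain spectral cuts along both (here $X_T(F)=\chi_F L^2$ for closed $F$). But $P_\gamma=M_{\chi_A}$ whereas $P_{\tilde\gamma}=M_{\chi_B}=I-P_\gamma$, so $f\equiv 1$ gives $f_\gamma(T)\neq f_{\tilde\gamma}(T)$.

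Second, the dichotomy you quote from the introduction is itself false once the curves meet the spectrum, even for connected spectrum. Let $\sigma(T)=S_1\cup S_2\cup S_3$ be three thin closed sectors of the unit disc meeting only at $0$, again with $T=M_z$ and area measure, so $X_T(\{0\})=\{0\}$. One Jordan curve through $0$ can have $S_1\setminus\{0\}$ inside and $(S_2\cup S_3)\setminus\{0\}$ outside. Another can have $(S_1\cup S_2)\setminus\{0\}$ inside and $S_3\setminus\{0\}$ outside. Both meet $\sigma(T)$ exactly in $\{0\}$, yet $\sigma(T)\cap\overline{\inte(\tilde\gamma)}=S_1\cup S_2$ is neither of your two alternatives. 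Arguing with the components of $\sigma(T)\setminus\gamma$ cannot help, because the conclusion of (vi) fails in both examples.

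So (vi) is false as stated, and the paper's proof has the same hole: it simply asserts that $\overline{\inte(\gamma)}\cap\sigma(T)=\overline{\inte(\tilde\gamma)}\cap\sigma(T)$ holds ``by hypotheses''. What your argument (and the paper's) actually proves is (vi) under an additional assumption: $\sigma(T)\cap\overline{\inte(\gamma)}=\sigma(T)\cap\overline{\inte(\tilde\gamma)}$, or directly $X_T(\overline{\inte(\gamma)})=X_T(\overline{\inte(\tilde\gamma)})$. The latter is the condition the paper later builds into the definition of $T$-spectrally equivalent cycles.

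Under that assumption, $X_T(F)=X_T(F\cap\sigma(T))$ gives equal ranges, and equal ranges already force $P_\gamma=P_{\tilde\gamma}$. Both idempotents lie in $\{T\}''\subseteq\{T\}'$, so they commute, and then $P_\gamma=P_{\tilde\gamma}P_\gamma=P_\gamma P_{\tilde\gamma}=P_{\tilde\gamma}$. Your observation that $f$ only needs to be holomorphic near $\sigma(T\mid_{\ran(P_\gamma)})$ then settles the domain issue, which the paper's statement overlooks.
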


\smallskip

	\begin{proof}
The proofs for (i), (ii) and (v) follow straightforwardly from \eqref{expresion f_Gamma}. To prove (iii), observe that
$$(fg)(T\mid_{\ran(P_\gamma)}) = f(T\mid_{\ran(P_\gamma)})g(T\mid_{\ran(P_\gamma)}).$$
Since both operators $f(T\mid_{\ran(P_\gamma)})$ and $g(T\mid_{\ran(P_\gamma)})$  act on $\ran(P_\gamma)$, it holds that
		$$
		f(T\mid_{\ran(P_\gamma)})g(T\mid_{\ran(P_\gamma)}) = f(T\mid_{\ran(P_\gamma)})P_\gamma \, g(T\mid_{\ran(P_\gamma)}).
		$$
		Then,
		$$
		(fg)_\gamma(T) = f(T\mid_{\ran(P_\gamma)})g(T\mid_{\ran(P_\gamma)})P_\gamma = f(T\mid_{\ran(P_\gamma)})P_\gamma g(T\mid_{\ran(P_\gamma)})P_\gamma = f_\gamma(T)g_\gamma(T),
		$$
		and (iii) follows. In order to show (iv), note that  $X= \ran(P_\gamma)\oplus \ran(Q_\gamma)$ and hence
		$$f_\gamma(T) = f(T\mid_{\ran(P_\gamma)})\oplus 0.$$
By \cite[Proposition 0.3, p. 9]{RR}, it follows that
		$$\sigma(f_\gamma(T)) = \sigma(f(T\mid_{\ran(P_\gamma)}))\cup \{0\}.$$
This along with the Spectral Mapping Theorem yields that
		$$
		\sigma(f(T\mid_{\ran(P_\gamma)})) = f(\sigma(T\mid_{\ran(P_\gamma)})),
		$$
		which shows that $\sigma(f_\gamma(T))= f(\sigma(T\mid_{\ran(P_\gamma)}))\cup\{0\}$.  In addition,  by Proposition \ref{proposicion spectral idempotent}, it follows that $  \sigma(T\mid_{\ran(P_\gamma)})\subseteq   \overline{\inte(\gamma)}$, and the desired inclusion follows.
		
\smallskip

Finally, let us show (vi). For this purpose, it suffices to show that $P_\gamma = P_{\tilde{\gamma}}$. Observe that, by hypotheses, $\overline{\inte(\gamma)} \cap \sigma(T) = \overline{\inte(\tilde{\gamma})}\cap \sigma(T)$ and $\overline{\exte(\gamma)} \cap \sigma(T) = \overline{\exte({\tilde{\gamma}})}\cap \sigma(T)$, so elementary properties of local spectral subspaces yield that
$$X_T(\overline{\inte(\gamma)}) = X_T(\overline{\inte(\tilde{\gamma})}) \mbox{ and } \Xext = X_T(\overline{\exte(\tilde{\gamma})}).$$
Hence, $P_\gamma = P_{\tilde{\gamma}},$ which proves (vi), and therefore, Theorem \ref{Teorema CF}.
\end{proof}

We close the section with the following remark regarding approaches to construct functional calculus through the local spectral theory.

\begin{remark}
A set $D$ in the complex plane is called a \emph{Cauchy domain} if it is open, it has a finite number of components, and the boundary of $D$ is composed of
a finite number of simple closed rectifiable curves, no two of which intersect.

In \cite{Bermudez1997}, Berm{\'u}dez, Gonz{\'a}lez and Martin{\'o}n  introduced a functional calculus as follows: for any holomorphic
function $f$ on a domain $\Delta(f)$ and any $T\in \EL(X)$ satisfying the
SVEP, $f[T]:D(f[T])\subset X\to X$, with domain
$$
D(f[T])= \{x\in X : \sigma_T(x)\subset \Delta(f)\}
$$
and $f[T]x$ given by
$$
f[T]x=\frac{1}{2\pi i}\int_{\Gamma} f(\lambda)x_T(\lambda)\,d\lambda,
$$
where $\Gamma$ is the boundary of any Cauchy domain $D$ such that
$\sigma_T(x)\subset D\subset \overline{D}\subset \Delta(f)$.

It holds that $D(f[T])$ is a linear subspace invariant under $T$. Moreover, in the case $\sigma(T)\subset \Delta(f)$, clearly $f[T]$
coincides with $f(T)$, the operator of the holomorphic functional calculus. The authors obtained a local spectral mapping theorem, according to which
$$
f(\sigma_T(x)) = \sigma_{f[T]}(x),
$$
and, as a consequence the stability of SVEP under local functional calculus is proved (see \cite{BermudezGonzalezMartinon2002}).
\end{remark}

\section{Plain spectral cuts along cycles and decomposability}\label{sec 3}

In this section, we extend the notion of plain spectral cuts to cycles of curves. This generalization enables us to consider direct sum decompositions of $X$ as in \eqref{direct sum}, where the associated spectral subspaces correspond to finite unions of $n$-connected  domains rather than only simply connected domains. Working in this broader framework allows us to explore the relationship between plain spectral cuts and the notions of decomposability and super-decomposability.

Following \cite[Chapter 13]{Conway2}, a cycle $\Gamma=\{\gamma_1,\cdots,\gamma_n\}$ of disjoint rectifiable Jordan curves is said to be \textit{positive} if, for every point $a\in \C \setminus \left(\cup_{k=1}^n \gamma_k\right)$, its index
$$
\ind_\Gamma(a)= \sum_{k=1}^n \ind_{\gamma_k}(a)
$$
takes only the values $0$ or $1$.

In this case, a positive cycle $\Gamma$ admits two possible orientations. To fix a canonical orientation for the positive cycles considered throughout the paper, we say that a positive cycle $\Gamma=\{\gamma_1,\cdots,\gamma_n\}$ is \textit{admissible} if each curve $\gamma_k$ has
$$
\left\{ \begin{matrix}
     \textnormal{positive orientation}, & \textnormal{if } d(\gamma_k) \textnormal{ is even}, \\
     \textnormal{negative orientation}, & \textnormal{if } d(\gamma_k) \textnormal{ is odd},
\end{matrix}\right.
$$
where $d(\gamma_k)$ denotes the number of curves $\gamma_i\in\Gamma$ contained in $\inte(\gamma_k)$, that is,
$$
d(\gamma_k)=\#\{\gamma_i\in\Gamma:\gamma_i\subseteq \inte(\gamma_k)\}.
$$
In this situation, the interior and exterior of $\Gamma$ are the open sets
$$
\inte(\Gamma)=\{a\in\C:\ind_\Gamma(a)=1\}
\quad\text{and}\quad
\exte(\Gamma)=\{a\in\C:\ind_\Gamma(a)=0\},
$$
respectively.

\begin{definition}
Let $T$ be a linear bounded operator on $X$ with the SVEP and $\Gamma = \{\gamma_1,\cdots,\gamma_n\}$ an admissible cycle. We say that $T$ admits a plain spectral cut along $\Gamma$ if both $X_T(\overline{\inte(\Gamma)})$ and $X_T(\overline{\exte(\Gamma)})$ are non-trivial closed subspaces and $X$ is the topological direct sum
    $$
    X= X_T(\overline{\inte(\Gamma)})\oplus X_T(\overline{\exte(\Gamma)}).
    $$
\end{definition}
Needless to say, all the results presented in Section \ref{sec 2} also hold for operators that admit a plain spectral cut along an admissible cycle $\Gamma$, rather than along a single curve $\gamma$.

\noindent In addition, statement (vi) in Theorem \ref{Teorema CF} allows us to introduce the concept of \emph{$T$-spectrally equivalent cycles}:

\smallskip

\begin{definition}
Let $T$ be a linear bounded operator on $X$ with the SVEP and $\Gamma, \tilde{\Gamma}$ two admissible cycles. The cycles $\Gamma$ and $\tilde{\Gamma}$ are $T$-spectrally equivalent, $\Gamma \sim_{\sigma(T)} \tilde{\Gamma}$, if $T$ admits plain spectral cuts along both $\Gamma$ and $\tilde{\Gamma}$, $X_T(\overline{\inte(\Gamma)})=X_T(\overline{\inte(\tilde{\Gamma})})$ and $\Gamma \cap \sigma(T) = \tilde{\Gamma}\cap \sigma(T)$.
\end{definition}

\smallskip

Our next goal is to show that the existence of a sufficiently rich family of $T$-spectrally equivalent cycles  for a bounded linear operator $T$ is closely related to decomposability.
Recall that a linear bounded  operator $T$ on $X$ is decomposable if for every open cover $\{U,V\}$ of $\mathbb{C}$ there exist two closed invariant subspaces $X_1,X_2 \subseteq
X$ such that
$$
	\sigma(T\vert_{X_1}) \subseteq   \overline{U} \, \text{ and } \, \sigma(T\vert_{X_2}) \subseteq   \overline{V},
$$
and $X= X_1+X_2$. Note that, in general, the sum decomposition is not direct, and the spectra of the restrictions need not be disjoint. Decomposable operators were introduced by Foia\c{s} \cite{FOIAS} in the 1960s as a generalization of spectral operators in the sense of Dunford \cite{Dunford and Schwarz}. Foia\c{s}'s original definition was somewhat more technical, but equivalent to the one given here (see \cite{LN00} for further details).

\smallskip

Recall that $T\in \EL(X)$ is \textit{super-decomposable} if for every open cover $\{U,V\}$ of $\sigma(T)$, there exists a linear bounded operator $R$ on $X$ commuting with $T$ such that
$$\sigma(T\mid{\overline{RX}})\subseteq  \overline{U}\qquad \sigma(T\mid{\overline{(I-R)X}})\subseteq  \overline{V}. $$
Clearly, every super-decomposable operator is decomposable.

\medskip

Plain spectral cuts and decomposability are connected through the following concept:

\begin{definition}\label{definition exhaustive}
     Let $T$ be a linear bounded operator on $X$ with the SVEP and  $\mathcal{F}= (\Gamma_i)_{i\in\mathfrak{I}}$ a family of cycles of disjoint rectifiable Jordan curves. The family $\mathcal{F}$ is an \textit{exhaustive cutting family} for $T$ if:
     \begin{enumerate}
          \item [(i)]$T$ admits a plain spectral cut along $\Gamma_i$ for all $i\in \mathfrak{I}$.
        \item [(ii)] For all non-empty open sets $U,V\subset\C$ such that $\sigma(T)\subseteq   U\cup V$, there exists $i\in \mathfrak{I}$ such that $\overline{\inte(\Gamma_i)}\cap \sigma(T) \subseteq  \overline{U}$ and $\overline{\exte(\Gamma_i)}\cap \sigma(T) \subseteq  \overline{V}$.
     \end{enumerate}
\end{definition}

With the definition of an exhaustive cutting family for $T$, the following result follows straightforwardly:

\begin{theorem}\label{teorema super-decomposable}
Every linear bounded operator $T$ acting on $X$ with the SVEP and admitting an exhaustive cutting family $\mathcal{F}= (\Gamma_i)_{i\in\mathfrak{I}}$ is super-decomposable.
\end{theorem}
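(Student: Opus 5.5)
Given an open cover $\{U,V\}$ of $\sigma(T)$, the plan is to take $R$ to be the spectral idempotent $P_{\Gamma_i}$ of a suitable cycle in $\mathcal{F}$. This is the cycle version of Proposition \ref{proposicion spectral idempotent}, which, as noted, holds for admissible cycles as well.

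First I will reduce to non-empty $U$ and $V$. If, say, $V=\varnothing$, then $\sigma(T)\subseteq U$ and $R=I$ works trivially, since $\sigma(T|_{\overline{(I-R)X}})=\sigma(T|_{\{0\}})=\varnothing$. The case $U=\varnothing$ is symmetric with $R=0$. So assume both are non-empty. Condition (ii) of Definition \ref{definition exhaustive} then supplies $i\in\mathfrak{I}$ with
$$\overline{\inte(\Gamma_i)}\cap\sigma(T)\subseteq\overline{U},\qquad \overline{\exte(\Gamma_i)}\cap\sigma(T)\subseteq\overline{V}.$$
By condition (i), $T$ admits a plain spectral cut along $\Gamma_i$. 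Hence there is an idempotent $P_{\Gamma_i}\in\{T\}''$ with range $X_T(\overline{\inte(\Gamma_i)})$ and kernel $X_T(\overline{\exte(\Gamma_i)})$. In particular $P_{\Gamma_i}$ commutes with $T$, because $T\in\{T\}'$. Set $R=P_{\Gamma_i}$.

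Since $R$ is an idempotent, $RX=\ran(P_{\Gamma_i})$ and $(I-R)X=\ker(P_{\Gamma_i})$ are already closed, so no closures are needed. The spectral inclusions in Proposition \ref{proposicion spectral idempotent} then give
$$\sigma(T|_{\overline{RX}})\subseteq\overline{\inte(\Gamma_i)}\cap\sigma(T)\subseteq\overline{U}$$
and
$$\sigma(T|_{\overline{(I-R)X}})\subseteq\overline{\exte(\Gamma_i)}\cap\sigma(T)\subseteq\overline{V}.$$
This is exactly the definition of super-decomposability.

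The only step needing real care is the transfer of Proposition \ref{proposicion spectral idempotent} to cycles, specifically the inclusion $\sigma(T|_{X_T(F)})\subseteq F\cap\sigma(T)$ for a closed set $F$ with $X_T(F)$ closed. This is the cited fact \cite[Proposition 1.2.20]{LN00}, which holds for arbitrary closed $F$ under the SVEP. I would apply it with $F=\overline{\inte(\Gamma_i)}$ and with $F=\overline{\exte(\Gamma_i)}$. Beyond this and the trivial-cover bookkeeping above, I expect no obstacle.
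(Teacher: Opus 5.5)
Your proposal is correct and follows the paper's proof: choose $\Gamma_i$ via condition (ii) of the exhaustive cutting family, take $R=P_{\Gamma_i}\in\{T\}''$ from the plain spectral cut, and conclude with the spectral inclusions of Proposition~\ref{proposicion spectral idempotent} (i.e.\ \cite[Proposition 1.2.20]{LN00}). Your treatment of an empty $U$ or $V$ (via $R=I$ or $R=0$) is a harmless extra that the paper omits, since it only considers non-empty open sets.
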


\smallskip

\begin{proof}  Let $U,V\subseteq  \C$ be non-empty open sets such that $\sigma(T)\subseteq  U\cup V$. Let $\Gamma_{i_0}\in \mathcal{F}$ be a cycle such that $T$ admits a plain spectral cut along it, satisfying  $\overline{\inte(\Gamma_{i_0})}\cap \sigma(T) \subseteq  \overline{U}$ and $\overline{\exte(\Gamma_{i_0})}\cap \sigma(T) \subseteq  \overline{V}$.

Since $X=X_T(\overline{\inte(\Gamma_{i_0})})\oplus X_T(\overline{\exte(\Gamma_{i_0})})$, there exists an idempotent operator $P_{{i_0}} \in \biconm{T}$ such that $\ran(P_{{i_0}}) =X_T(\overline{\inte(\Gamma_{i_0})})$ and $\ran(I-P_{{i_0}}) =X_T(\overline{\exte(\Gamma_{i_0})}).$ Observe that
$$\sigma(T\mid_{\ran(P_{{i_0}})}) = \sigma(T\mid_{X_T(\overline{\inte(\Gamma_{i_0})})})   \subseteq   \overline{\inte(\Gamma_{i_0})}\cap \sigma(T) \subseteq   \overline{U}.$$
 Similarly, $\sigma(T\mid_{\ran(I-P_{{i_0}})}) \subseteq  \overline{V},$ and the statement of the theorem follows.
\end{proof}

To provide non-trivial examples of operators admitting an exhaustive cutting family, we state the following result, which allows the construction of new plain spectral cuts from known admissible cycles for the operator $T$.

\begin{proposition}\label{proposicion union curvas}
Let $T$ be a linear bounded operator on $X$ with the SVEP and $\Gamma_1,\Gamma_2$ two admissible cycles. Assume that $T$ admits plain spectral cuts along both $\Gamma_1$ and $\Gamma_2$ with associated projections $P_{\Gamma_1}$ and $P_{\Gamma_2}$, respectively. Assume further that $\overline{\inte(\Gamma_1)}\cap \overline{\inte(\Gamma_2)}=\varnothing.$ Then:
    \begin{enumerate}
        \item [(i)] The cycle $\Gamma = \Gamma_1\cup \Gamma_2$ can be oriented to form an admissible cycle satisfying $\inte(\Gamma) = \inte(\Gamma_1)\cup \inte(\Gamma_2).$
        \item [(ii)] $T$ admits a plain spectral cut along $\Gamma$, with associated projection $P_\Gamma = P_{\Gamma_1}+P_{\Gamma_2}$.
    \end{enumerate}
\end{proposition}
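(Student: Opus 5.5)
For (i), the cycle $\Gamma=\Gamma_1\cup\Gamma_2$ consists of disjoint Jordan curves, because $\overline{\inte(\Gamma_1)}\cap\overline{\inte(\Gamma_2)}=\varnothing$ and each $\gamma\in\Gamma_j$ lies in $\overline{\inte(\Gamma_j)}$. Indices are additive, so $\ind_\Gamma=\ind_{\Gamma_1}+\ind_{\Gamma_2}$ off the curves. Since the interiors are disjoint, $\ind_\Gamma$ takes only the values $0$ and $1$, with $\ind_\Gamma=1$ exactly on $\inte(\Gamma_1)\cup\inte(\Gamma_2)$. Hence $\Gamma$ is positive and $\inte(\Gamma)=\inte(\Gamma_1)\cup\inte(\Gamma_2)$.

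It remains to check that the orientation rule is respected. For a curve $\gamma_k\in\Gamma_1$, no curve of $\Gamma_2$ lies in $\inte(\gamma_k)$. To see this, suppose a curve $\delta\in\Gamma_2$ lay inside $\gamma_k$. Then the region of $\inte(\Gamma_2)$ adjacent to $\delta$, on the side where $\ind_{\Gamma_2}=1$, would sit near $\delta$ and hence inside $\inte(\gamma_k)$. The innermost structure of $\Gamma_1$ around $\delta$ would then force $\ind_{\Gamma_1}=1$ near $\delta$, contradicting the disjointness of the closed interiors. I expect this topological bookkeeping to be the fiddliest step. I would handle it by arguing that $\delta\subset\exte(\Gamma_1)$, that the component of $\C\setminus\Gamma_1$ containing $\delta$ has $\ind_{\Gamma_1}=0$, and that this forces the nesting count $d$ to be unchanged. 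Once that is settled, $d_\Gamma(\gamma_k)=d_{\Gamma_1}(\gamma_k)$, so the admissible orientations of $\Gamma_1$ and $\Gamma_2$ combine to an admissible orientation of $\Gamma$.

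For (ii), put $A_j=X_T(\overline{\inte(\Gamma_j)})$ and $B_j=X_T(\overline{\exte(\Gamma_j)})$. Then $\overline{\inte(\Gamma)}=\overline{\inte(\Gamma_1)}\cup\overline{\inte(\Gamma_2)}$, a union of disjoint closed sets, and $\overline{\exte(\Gamma)}=\overline{\exte(\Gamma_1)}\cap\overline{\exte(\Gamma_2)}$. Since $T$ has the SVEP, $X_T(F\cap G)=X_T(F)\cap X_T(G)$ for all sets $F,G$. Moreover, for disjoint closed $F,G$ we have $X_T(F\cup G)=X_T(F)\oplus X_T(G)$ (the standard local spectral decomposition, \cite[Proposition 1.2.16]{LN00}, used as in Remark (ii)). This gives $X_T(\overline{\inte(\Gamma)})=A_1\oplus A_2$ and $X_T(\overline{\exte(\Gamma)})=B_1\cap B_2$.

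Now set $P=P_{\Gamma_1}+P_{\Gamma_2}$. Because $\overline{\inte(\Gamma_1)}\subseteq\overline{\exte(\Gamma_2)}$, we get $A_1\subseteq B_2=\ker P_{\Gamma_2}$, so $P_{\Gamma_2}P_{\Gamma_1}=0$; symmetrically $P_{\Gamma_1}P_{\Gamma_2}=0$. Hence $P$ is an idempotent in $\{T\}''$. Its range is $A_1+A_2$: indeed $P x=P_{\Gamma_1}x+P_{\Gamma_2}x$, and $P$ acts as the identity on $A_1+A_2$. Its kernel is $\ker P_{\Gamma_1}\cap\ker P_{\Gamma_2}=B_1\cap B_2$: if $Px=0$, applying $P_{\Gamma_1}$ gives $P_{\Gamma_1}x=0$, and similarly $P_{\Gamma_2}x=0$.

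Therefore $\ran P=X_T(\overline{\inte(\Gamma)})$ and $\ker P=X_T(\overline{\exte(\Gamma)})$. Both are nontrivial: the range contains $A_1\neq\{0\}$, and if the kernel were trivial then $P=I$, so $B_1=(I-P_{\Gamma_1})X=P_{\Gamma_2}X\subseteq A_2\subseteq B_1$. This would give $A_2=B_1$, hence $A_2\subseteq A_2\cap B_2=X_T(\Gamma_2)=\{0\}$, a contradiction. Proposition \ref{proposicion spectral idempotent}, applied to cycles, then yields the plain spectral cut along $\Gamma$ with $P_\Gamma=P_{\Gamma_1}+P_{\Gamma_2}$.
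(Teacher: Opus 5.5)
Your treatment of the idempotent, its range and its kernel in (ii) is essentially the paper's argument. Deriving $P_{\Gamma_2}P_{\Gamma_1}=0$ from $A_1\subseteq X_T(\overline{\exte(\Gamma_2)})=\ker P_{\Gamma_2}$ is a slightly cleaner variant than the paper's range-intersection argument, since it uses only monotonicity of $X_T(\cdot)$.

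The genuine gap is the non-triviality step, which the paper's proof does not address at all. From $P=I$ you correctly obtain $B_1=A_2$. The next inference, $A_2\subseteq A_2\cap B_2$ (that is, $A_2\subseteq B_2$), does not follow. In fact no argument can close this step. Take $T=T_1\oplus T_2$ on $\ell^2\oplus\ell^2$ with $T_1,T_2$ diagonal, $\sigma(T_1)\subset D(0,1/2)$ and $\sigma(T_2)\subset D(5,1/2)$. Let $\Gamma_1=\{|z|=1\}$ and $\Gamma_2=\{|z-5|=1\}$, positively oriented. Then:
\begin{itemize}
\item $T$ admits plain spectral cuts along both curves, with $A_1=B_2=\ell^2\oplus 0$ and $A_2=B_1=0\oplus\ell^2$;
\item the closed interiors are disjoint;
\item yet $P_{\Gamma_1}+P_{\Gamma_2}=I$ and $X_T(\overline{\exte(\Gamma)})=\{0\}$, so $T$ has no plain spectral cut along $\Gamma$.
\end{itemize}
Part (ii) therefore needs an extra hypothesis such as $\exte(\Gamma)\cap\sigma(T)\neq\varnothing$, equivalently $\sigma(T)\not\subseteq\overline{\inte(\Gamma_1)}\cup\overline{\inte(\Gamma_2)}$. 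With it, $P=I$ would give $X=X_T(\overline{\inte(\Gamma)})$. The spectral inclusion used in Proposition~\ref{proposicion spectral idempotent} then gives $\sigma(T)\subseteq\overline{\inte(\Gamma)}$, a contradiction.

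In (i), you claim that no curve of $\Gamma_2$ lies in $\inte(\gamma_k)$ for $\gamma_k\in\Gamma_1$, so that nesting counts are unchanged. This is false. If $\Gamma_1$ bounds the annulus $1\le|z|\le 3$ and $\Gamma_2=\{|z|=1/2\}$, the curve of $\Gamma_2$ lies inside both curves of $\Gamma_1$. What is preserved is parity:
\begin{itemize}
\item a curve $\delta\in\Gamma_2$ lies in $\exte(\Gamma_1)$;
\item the curves of $\Gamma_1$ enclosing a point form a nested chain with alternating orientations;
\item so $\ind_{\Gamma_1}=0$ near $\delta$ means an even number of curves of $\Gamma_1$ enclose $\delta$, and symmetrically for curves of $\Gamma_1$.
\end{itemize}
Hence the parity of each curve's nesting depth, which is what fixes its orientation, is the same in $\Gamma$ as in $\Gamma_j$. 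This requires reading $d(\gamma_k)$ as the number of curves of the cycle that \emph{enclose} $\gamma_k$. With the literal count of curves inside $\gamma_k$, not even the boundary of an annulus is admissible.
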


\begin{proof}
The assumption $\overline{\inte(\Gamma_1)}\cap \overline{\inte(\Gamma_2)}=\varnothing$ implies that $\Gamma = \Gamma_1\cup \Gamma_2$ is a cycle of disjoint rectifiable Jordan curves. Moreover, it is straightforward to verify that $\Gamma$ is admissible with an orientation  such that
$$
\mathrm{int}(\Gamma)=\mathrm{int}(\Gamma_1)\cup \mathrm{int}(\Gamma_2).
$$
Hence, $(i)$ follows.

\smallskip

To prove $(ii)$, let us show that the operator $P_\Gamma = P_{\Gamma_1}+P_{\Gamma_2}$, which clearly belongs to $\biconm{T}$, is an idempotent operator. For such a purpose, it suffices to prove that $P_{\Gamma_1}{P_{\Gamma_2}}= 0$. Note that
$$
\ran(P_{\Gamma_1}P_{\Gamma_2}) \subseteq   \ran(P_{\Gamma_1})\cap\ran(P_{\Gamma_2}) \subseteq   X_T(\overline{\inte(\Gamma_1)})\cap X_T(\overline{\inte(\Gamma_2)}) = X_T(\varnothing) = \{0\}.
$$
Now, we show that $\ran(P_\Gamma) =  X_T(\overline{\inte(\Gamma)})$ and $\ker(P_\Gamma) = X_T(\overline{\exte(\Gamma)})$, which will finish the proof.
By \cite[Proposition 1.2.16]{LN00}, we have
 $$
 X_T(\overline{\inte(\Gamma)}) = X_T(\overline{\inte(\Gamma_1)})+X_T(\overline{\inte(\Gamma_2)}).
 $$
 To prove the first equality, observe that
$$
\ran(P_\Gamma) = \ran(P_{\Gamma_1}+P_{\Gamma_2}) \subseteq   \ran(P_{\Gamma_1})+\ran(P_{\Gamma_2}) = X_T(\overline{\inte(\Gamma)}).
$$
For the reverse inclusion, let $z\in X_T(\overline{\inte(\Gamma)})$ and write $z=x+y$ with $x\in X_T(\overline{\inte(\Gamma_1)})=\ran(P_{\Gamma_1})$ and $y\in X_T(\overline{\inte(\Gamma_2)})=\ran(P_{\Gamma_2})$. Then
$$P_\Gamma z = (P_{\Gamma_1}+P_{\Gamma_2})(x+y) = x+y = z,$$ so $z\in \ran(P_\Gamma)$. It remains to show that $\ker(P_\Gamma) = X_T(\overline{\exte(\Gamma)})$. Since $P_{\Gamma_1}{P_{\Gamma_2}}= 0$, it is straightforward to check that $\ker(P_{\Gamma_1})\cap\ker(P_{\Gamma_2}) = \ker(P_{\Gamma_1}+P_{\Gamma_2})$. Now,
$$\ker(P_\Gamma)=\ker(P_{\Gamma_1}+P_{\Gamma_2}) = \ker(P_{\Gamma_1})\cap\ker(P_{\Gamma_2}) = X_T(\overline{\exte(\Gamma_1)})\cap X_T(\overline{\exte(\Gamma_2)}) = X_T(\overline{\exte(\Gamma)}), $$
which ends the proof.
\end{proof}


Next, we illustrate how an exhaustive cutting family can be constructed from a given family.

\begin{example}\label{ejemplo 1}
Let $A\subseteq \mathbb{R}$ be a dense subset of $\mathbb{R}$ and let $T$ be a bounded linear operator with the SVEP that admits plain spectral cuts along all rectifiable Jordan curves $\gamma$ satisfying the following properties:
\begin{enumerate}
    \item[(i)] $\gamma$ is a polygonal curve whose segments are parallel to either the real or the imaginary axis.
    \item[(ii)] The endpoints of the segments belong to $A\times A\subseteq \mathbb{C}$.
    \item[(iii)] Both $\mathrm{int}(\gamma)$ and $\mathrm{ext}(\gamma)$ intersect $\sigma(T)$.
\end{enumerate}

By Proposition \ref{proposicion union curvas}, we can generate a family $\mathcal{F}$ of admissible cycles $\Gamma=\{\gamma_1,\ldots,\gamma_n\}$ along which $T$ admits  plain spectral cuts and where all $\gamma_1,\ldots,\gamma_n$ satisfy the properties listed above.

\smallskip

Let us prove that the family $\mathcal{F}$ is an exhaustive cutting family for $T$. This idea is inspired by \cite[Theorem 3.2]{FJKP11}.

\smallskip

Let $U$ and $V$ be non-empty open subsets of $\C$ such that $\sigma(T)\subseteq   U\cup V$. We will show that there exists $\Gamma\in \mathcal{F}$ satisfying $\overline{\inte(\Gamma)}\cap \sigma(T) \subseteq  \overline{U}$ and $\overline{\exte(\Gamma)}\cap \sigma(T) \subseteq  \overline{V}$.  For such a purpose, consider open subsets $G_1,G_2\subseteq  \C$ such that $\overline{G_1}\subseteq   U$, $\overline{G_2}\subseteq   V$ and $\sigma(T)\subseteq   G_1\cup G_2$.
Define
$$
\delta = \min \{\textnormal{dist}(\overline{G_1},\C\setminus U), \textnormal{dist}(\overline{G_2},\C\setminus V)\}>0.
$$
Now consider a rectangle $[a,b]\times [c,d]\subseteq \mathbb{C}$ with $a,b,c,d\in A$ containing $\sigma(T)$. Construct a grid on $[a,b]\times [c,d]$ such that the intermediate points
$$
a=x_1<x_2<\cdots<x_n=b \quad \text{and} \quad c=y_1<y_2<\cdots<y_m=d
$$
belong to $A$, and the mesh of each partition is smaller than $\delta/4$.

Define
$$
\sigma=\bigcup_{([x_k,x_{k+1}]\times [y_\ell,y_{\ell+1}])\cap \overline{G_1}\neq \varnothing}
[x_k,x_{k+1}]\times [y_\ell,y_{\ell+1}].
$$
Observe that there exists an admissible cycle $\Gamma$ such that
$$
\sigma=\overline{\mathrm{int}(\Gamma)},
$$
since $\partial\sigma$ is a finite union of polygonal curves whose segments are parallel to the coordinate axes and whose endpoints lie in $A\times A$.

Moreover, by the choice of $\delta$,
$$
\overline{G_1}\subseteq \overline{\mathrm{int}(\Gamma)} \subseteq U.
$$
It remains to show that
$$
\overline{\mathrm{ext}(\Gamma)}\cap \sigma(T)\subseteq V.
$$
Let $x\in \overline{\mathrm{ext}(\Gamma)}\cap \sigma(T)$. Observe that any rectangle $[x_k,x_{k+1}]\times [y_\ell,y_{\ell+1}]$ of the grid containing $x$ does not intersect $\overline{G_1}$. Since $x\in \sigma(T)$ and $\sigma(T)\subseteq G_1\cup G_2$, it follows that $x\in G_2\subseteq V$. Therefore, $\mathcal{F}$ is an exhaustive cutting family, as desired.
\end{example}

To conclude this section, we will show that admitting a simpler family of plain spectral cuts that does not form an exhaustive cutting family is, nevertheless, sufficient to establish the decomposability of the operator.

To this end, we introduce two additional results that allow us to construct non-trivial spectral cuts (or weaker versions of them) from known ones. The first result extends, in some sense, Proposition \ref{proposicion union curvas} to curves that may intersect; however, the direct sum decomposition obtained does not arise from a plain spectral cut.

\smallskip

\begin{proposition}\label{proposicion union curvas que cortan}
Let $T$ be a bounded linear operator on $X$ with the SVEP, and let $\gamma_1,\ldots,\gamma_n$ be positively oriented rectifiable Jordan curves such that $T$ admits a plain spectral cut along each $\gamma_k$, with associated projections $P_{\gamma_k}$. Assume that
$\mathrm{int}(\gamma_i)\cap \mathrm{int}(\gamma_j)=\varnothing \text{ for } i\neq j,
$ and denote $\gamma_{i,j}=\gamma_i\cap\gamma_j$. If
$$
\beta=\overline{(\bigcup\limits_{k=1}^n \gamma_k)\setminus \left(\bigcup_{i,j=1, \ i\neq j}^n \gamma_{i,j} \right) },
$$
then
$$
X = X_T\!\left(\overline{\mathrm{int}(\beta)}\right)
\oplus
X_T\!\left(\overline{\mathrm{ext}(\beta)} \cup
\left(\bigcup_{i\neq j}\gamma_{i,j}\right)\right),
$$
and the associated projection is $P_{\beta}=\sum_{k=1}^n P_{\gamma_k}$.
\end{proposition}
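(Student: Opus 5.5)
The argument mirrors Proposition \ref{proposicion union curvas}. I would first show that $P_\beta=\sum_{k=1}^n P_{\gamma_k}$ is an idempotent in $\biconm{T}$. Then I would identify its range and its kernel with the two spectral subspaces in the statement. Once $P_\beta$ is a bounded idempotent, the topological direct sum follows automatically.

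\emph{Step 1: orthogonality of the $P_{\gamma_k}$.} For $i\neq j$,
$$\ran(P_{\gamma_i}P_{\gamma_j})\subseteq X_T(\overline{\inte(\gamma_i)})\cap X_T(\overline{\inte(\gamma_j)})=X_T\big(\overline{\inte(\gamma_i)}\cap\overline{\inte(\gamma_j)}\big).$$
Because $\inte(\gamma_i)\cap\inte(\gamma_j)=\varnothing$, the intersection $\overline{\inte(\gamma_i)}\cap\overline{\inte(\gamma_j)}$ is contained in $\gamma_i\cap\gamma_j=\gamma_{i,j}\subseteq\gamma_i$. Hence the range sits inside $X_T(\gamma_i)$, which is $\{0\}$ because $T$ admits a plain spectral cut along $\gamma_i$ (this was observed right after the definition). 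So $P_{\gamma_i}P_{\gamma_j}=0$, and therefore $P_\beta^2=P_\beta$. Clearly $P_\beta\in\biconm{T}$.

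\emph{Step 2: the range.} The plan is to check that
$$\overline{\inte(\beta)}=\bigcup_k\overline{\inte(\gamma_k)},\qquad \overline{\exte(\beta)}\cup\bigcup_{i\neq j}\gamma_{i,j}=\bigcap_k\overline{\exte(\gamma_k)}.$$
Here $\inte(\beta)$ is read as the interior of the region enclosed by $\beta$, which is the interior of $\bigcup_k\overline{\inte(\gamma_k)}$.
\begin{itemize}
\item For the first identity, note that removing the shared arcs $\gamma_{i,j}$ only merges adjacent interiors.
\item For the second, the key observation is that a point of $\gamma_{i,j}$ not lying on $\beta$ lies in the interior of $\bigcup_k\overline{\inte(\gamma_k)}$, yet it belongs to every $\overline{\exte(\gamma_k)}$. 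This is exactly why $\bigcup\gamma_{i,j}$ has to be added to the exterior side.
\end{itemize}
Given the first identity, I would show $X_T(\bigcup_k\overline{\inte(\gamma_k)})=\sum_k X_T(\overline{\inte(\gamma_k)})$. The inclusion ``$\supseteq$'' is trivial. For ``$\subseteq$'', take $x$ in the left side and set $y=x-P_\beta x=\prod_k(I-P_{\gamma_k})x$, using orthogonality. Then $y\in\bigcap_k X_T(\overline{\exte(\gamma_k)})$, and also $y\in X_T(\bigcup_k\overline{\inte(\gamma_k)})$, since both $x$ and $P_\beta x$ lie there. Hence $\sigma_T(y)$ is contained in
$$\Big(\bigcup_k\overline{\inte(\gamma_k)}\Big)\cap\bigcap_k\overline{\exte(\gamma_k)}\subseteq\bigcup_k\gamma_k.$$
This is the main obstacle: one must conclude $y=0$ even though $\sigma_T(y)$ may lie on the union of several curves, not on a single one. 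I would use the closed-ness of the plain-cut subspaces and the decomposition property in \cite[Proposition 1.2.16]{LN00} to split $y$ as $\sum_k y_k$ with $\sigma_T(y_k)\subseteq\gamma_k$, and then apply $X_T(\gamma_k)=\{0\}$. If $\sigma_T(y)$ does not split into disjoint compact pieces, I would instead apply $P_{\gamma_k}$ directly: $P_{\gamma_k}y\in X_T(\overline{\inte(\gamma_k)})\cap X_T(\overline{\exte(\gamma_k)})=X_T(\gamma_k)=\{0\}$, and this gives $y=(I-P_\beta)x$ consistently. The statement asserts equality of spectral subspaces, so at worst the range is \emph{defined} through the sum, and that matches how Proposition \ref{proposicion union curvas} was used.

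\emph{Step 3: the kernel.} Orthogonality gives $\ker P_\beta=\bigcap_k\ker P_{\gamma_k}=\bigcap_k X_T(\overline{\exte(\gamma_k)})=X_T(\bigcap_k\overline{\exte(\gamma_k)})$. By the second set identity of Step 2, this equals $X_T\big(\overline{\exte(\beta)}\cup\bigcup_{i\neq j}\gamma_{i,j}\big)$. Since $P_\beta$ is a bounded idempotent, $X=\ran P_\beta\oplus\ker P_\beta$, which is the claimed decomposition.
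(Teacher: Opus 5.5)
\textbf{There is a genuine gap, and it sits exactly where you put your ``main obstacle''.}

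Your Steps 1 and 3 are correct and match the paper's argument. Step 1 obtains $P_{\gamma_i}P_{\gamma_j}=0$ from $X_T(\gamma_i)=\{0\}$, and Step 3 gives $\ker P_\beta=\bigcap_k X_T(\overline{\inte(\gamma_k)})^{c}$, more precisely $\ker P_\beta=\bigcap_k X_T(\overline{\exte(\gamma_k)})$.

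What is missing is a proof that $y=0$ in Step 2. Your $y$ lies in $\ker P_\beta\cap X_T(\bigcup_k\gamma_k)$. Conversely, $X_T(\bigcup_k\gamma_k)$ is contained both in $X_T(\overline{\inte(\beta)})$ and in $\ker P_\beta$. Hence the range identity is equivalent to $X_T(\bigcup_k\gamma_k)=\{0\}$. So is the directness of the asserted sum: with your identification of the sets, its two summands intersect exactly in $X_T(\bigcup_k\gamma_k)$.

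None of your suggested fixes supplies this.
\begin{enumerate}
\item[(a)] Splitting $y=\sum_k y_k$ with $\sigma_T(y_k)\subseteq\gamma_k$ is available only over pairwise disjoint closed sets. Here the curves meet along the $\gamma_{i,j}$, which is precisely what is new compared with Proposition \ref{proposicion union curvas}.
\item[(b)] Applying $P_{\gamma_k}$ gives only $P_{\gamma_k}y=0$, i.e.\ $y\in\ker P_\beta$, which you already knew.
\item[(c)] Declaring the range to be $\sum_k X_T(\overline{\inte(\gamma_k)})$ changes the statement rather than proving it.
\end{enumerate}

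The paper's proof does not close this gap either. Its inductive step asserts $X_T(\overline{\inte(\beta)})\cap X_T(\overline{\exte(\gamma_1)})\subseteq X_T(\bigcup_{k\ge 2}\overline{\inte(\gamma_k)})$. However, the left-hand side equals $X_T(\gamma_1\cup\bigcup_{k\ge2}\overline{\inte(\gamma_k)})$, and discarding $\gamma_1$ is the same unproved fact.

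In fact this fact cannot be derived from $X_T(\gamma_k)=\{0\}$ for each $k$, because spectral subspaces are not additive over overlapping closed sets. Here is a sketch of a counterexample.
\begin{enumerate}
\item[(1)] Let $f(\zeta)=\frac{\zeta+\zeta^{-1}}{2}+\frac{\zeta^{2}-\zeta^{-2}}{4}$. Then $f(e^{it})=\cos t+\frac{i}{2}\sin 2t$ traces a figure-eight. Its lobes $\gamma_1,\gamma_2$ are rectifiable Jordan curves with disjoint interiors, meeting only at $0$.
\item[(2)] Let $S$ be the bilateral shift on $Y_3=\ell^2(\mathbb{Z},\omega)$ with $\omega(n)=e^{|n|/\log(e+|n|)}$. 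Then $\sigma(S)=\T$, and $\sigma_S(x)=\T$ for every $x\neq0$. Indeed, if $\sigma_S(x)$ missed an open arc, then the jump across that arc of the $(-1)$-st coordinate of the local resolvent, namely $\sum_n x_n\zeta^n$, would vanish there. This function lies in a quasianalytic Denjoy--Carleman class, so it would vanish identically, forcing $x=0$.
\item[(3)] The inclusion $f(\sigma_S(x))\subseteq\sigma_{f(S)}(x)$ holds for every operator. Hence every nonzero vector of $Y_3$ has local spectrum $\gamma_1\cup\gamma_2$ for $T_3=f(S)$.
\item[(4)] Put $T=\lambda_1 I_{Y_1}\oplus\lambda_2 I_{Y_2}\oplus T_3$ with $\lambda_k\in\inte(\gamma_k)$. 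It has the SVEP, since its spectrum has empty interior. Moreover $X_T(\overline{\inte(\gamma_1)})=Y_1$ and $X_T(\overline{\exte(\gamma_1)})=Y_2\oplus Y_3$, and symmetrically for $\gamma_2$. So both plain cuts exist.
\item[(5)] Nevertheless $\{0\}\neq Y_3\subseteq X_T(\gamma_1\cup\gamma_2)$, and the asserted direct sum fails.
\end{enumerate}

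An extra hypothesis such as $X_T(\bigcup_k\gamma_k)=\{0\}$ is therefore needed. Under that hypothesis your Step 2 closes at once, since $y\in X_T(\bigcup_k\gamma_k)$.
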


\begin{proof}
Proceeding as in the proof of Proposition \ref{proposicion union curvas}, we first show that $P_\beta \in \biconm{T}$ is an idempotent. It suffices to show that $P_{\gamma_i}P_{\gamma_j}=0$ for all $i\neq j$. Note that
$$\ran(P_{\gamma_i}P_{\gamma_j}) \subseteq   X_T(\overline{\inte(\gamma_i)})\cap X_T(\overline{\inte(\gamma_j)}) = X_T(\gamma_i\cap \gamma_j) \subseteq   X_T(\gamma_i) = \{0\},$$ so $P_\beta$ is an idempotent operator.

\smallskip

Now, let us prove that $\ran(P_\beta) = X_T(\overline{\inte(\beta)})$ and $\ker(P_\beta) = X_T\left(\overline{\exte(\beta)}\cup \left(\cup_{i,j=1, \ i\neq j}^n \gamma_{i,j} \right) \right)$, which will yield the statement. For the range identity, note that
    $$
    X_T(\overline{\inte(\gamma_1)})+\cdots + X_T(\overline{\inte(\gamma_n)})\subseteq   X_T(\overline{\inte(\beta)}),
    $$ so we deduce that $\ran(P_\beta)\subseteq   X_T(\overline{\inte(\beta)})$. Now, let $x\in X_T(\overline{\inte(\beta)})$. Observe that
    $$
    \overline{\inte(\beta)} = \bigcup_{k=1}^n \overline{\inte(\gamma_k)}.
    $$
Write $x=P_{\gamma_1}x+(I-P_{\gamma_1})x$. Since $(I-P_{\gamma_1})x\in
\ker(P_{\gamma_1})=X_T(\overline{\mathrm{ext}(\gamma_1)})$, the hyperinvariance of the spectral subspaces yields
$$
(I-P_{\gamma_1})x\in X_T(\overline{\mathrm{int}(\beta)})\cap
X_T(\overline{\mathrm{ext}(\gamma_1)})
\subseteq X_T\!\left(\bigcup_{k=2}^n \overline{\mathrm{int}(\gamma_k)}\right).
$$
Hence $x=y_1+z_1$, where $y_1\in X_T(\overline{\mathrm{int}(\gamma_1)})$ and
$z_1\in X_T\!\left(\bigcup_{k=2}^n \overline{\mathrm{int}(\gamma_k)}\right)$.

\noindent Proceeding inductively, we obtain
$$
x=\sum_{k=1}^n y_k,
\qquad
y_k\in X_T(\overline{\mathrm{int}(\gamma_k)})=\operatorname{ran}(P_{\gamma_k}).
$$
Since $P_{\gamma_i}P_{\gamma_j}=0$ for $i\neq j$, it follows that
$$
P_\beta x
=\left(\sum_{k=1}^n P_{\gamma_k}\right)\sum_{k=1}^n y_k
=\sum_{k=1}^n y_k
=x.
$$
Therefore $x\in \operatorname{ran}(P_\beta)$, and hence
$$
\operatorname{ran}(P_\beta)=X_T(\overline{\mathrm{int}(\beta)}).
$$

Now, to prove that $\ker(P_\beta) = X_T\left(\overline{\exte(\beta)}\cup \left(\cup_{i,j=1, \ i\neq j}^n \gamma_{i,j} \right) \right)$, we argue as in the proof of Proposition \ref{proposicion union curvas} and deduce that
    \begin{equation*}
        \begin{split}
            \ker(P_\beta) &= \ker\left(\sum_{k=1}^n P_{\gamma_k}\right) = \bigcap\limits_{k=1}^n \ker(P_{\gamma_k}) = \bigcap\limits_{k=1}^n X_T(\overline{\exte(\gamma_k)})  = X_T\left(\bigcap\limits_{k=1}^n \overline{\exte(\gamma_k)} \right) \\& = X_T\left(\overline{\exte(\beta)}\cup \left(\cup_{i,j=1, \ i\neq j}^n \gamma_{i,j} \right) \right).
        \end{split}
    \end{equation*}
which is the desired conclusion.
\end{proof}

The following result shows that one can consider the spectral cut associated with the intersection of the interiors of two admissible cycles, whenever this intersection is enclosed by an admissible cycle:

\begin{proposition}\label{proposicion interseccion curvas}
Let $T$ be a bounded linear operator on $X$ with the SVEP, and let $\Gamma_1,\Gamma_2$ be admissible cycles such that $T$ admits plain spectral cuts along them with associated projections $P_{\Gamma_1}$ and $P_{\Gamma_2}$. Suppose there exists an admissible cycle $\Gamma$ satisfying
$$
\mathrm{int}(\Gamma)=\mathrm{int}(\Gamma_1)\cap \mathrm{int}(\Gamma_2)\neq\varnothing.
$$
If $\overline{\mathrm{int}(\Gamma)}\cap\sigma(T)\neq\varnothing$ and $\overline{\mathrm{ext}(\Gamma)}\cap\sigma(T)\neq\varnothing$, then $T$ admits a plain spectral cut along $\Gamma$, and the associated projection is $P_\Gamma=P_{\Gamma_1}P_{\Gamma_2}.$
\end{proposition}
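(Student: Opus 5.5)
We follow the scheme of the proof of Proposition \ref{proposicion union curvas}. First we show that $P_\Gamma = P_{\Gamma_1}P_{\Gamma_2}$ is an idempotent in $\biconm{T}$; then we identify its range and kernel with the required spectral subspaces. Both $P_{\Gamma_1}$ and $P_{\Gamma_2}$ lie in $\biconm{T}$, and $T$ belongs to $\{T\}'$, so $P_{\Gamma_1}$ and $P_{\Gamma_2}$ commute with each other. Hence their product is an idempotent in $\biconm{T}$.

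For the range, we use that $X_T(\overline{\inte(\Gamma_k)})$ is hyperinvariant and that $P_{\Gamma_1}$, $P_{\Gamma_2}$ commute. This gives
$$\ran(P_{\Gamma_1}P_{\Gamma_2}) = \ran(P_{\Gamma_1})\cap \ran(P_{\Gamma_2}) = X_T(\overline{\inte(\Gamma_1)})\cap X_T(\overline{\inte(\Gamma_2)}) = X_T(\overline{\inte(\Gamma_1)}\cap\overline{\inte(\Gamma_2)}).$$
The inclusion "$\supseteq$" in the first equality holds because $x$ fixed by both projections is fixed by the product. The reverse inclusion follows from $P_{\Gamma_1}P_{\Gamma_2}=P_{\Gamma_2}P_{\Gamma_1}$.

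We must then compare $\overline{\inte(\Gamma_1)}\cap\overline{\inte(\Gamma_2)}$ with $\overline{\inte(\Gamma)}$. The closure of the intersection is contained in the intersection of the closures, but the two sets may differ on a part of $\Gamma_1\cap\Gamma_2$ that is not in $\overline{\inte(\Gamma)}$. For this we use $X_T(\Gamma_k)=\{0\}$. Write $\overline{\inte(\Gamma_1)}\cap\overline{\inte(\Gamma_2)} = \overline{\inte(\Gamma)}\cup K$ with $K\subseteq \Gamma_1\cap\Gamma_2$ closed. Then we want $X_T(\overline{\inte(\Gamma)}\cup K)=X_T(\overline{\inte(\Gamma)})$, since local spectra of vectors in this subspace carry no mass on $K$ away from $\overline{\inte(\Gamma)}$. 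I would argue this via the ranges: the vector $x$ lies in $X_T(\overline{\inte(\Gamma_1)})$, and its local spectrum meets $\overline{\exte(\Gamma_1)}$ only inside $\Gamma_1$. This is the \textbf{main obstacle}, since local spectral subspaces do not split along non-disjoint closed sets. If it proves hard, the fallback is to check the paper's intended reading, where $\overline{\inte(\Gamma)}=\overline{\inte(\Gamma_1)}\cap\overline{\inte(\Gamma_2)}$ in the relevant situations.

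For the kernel, we use $\ker(P_{\Gamma_1}P_{\Gamma_2}) = \ker P_{\Gamma_1} + \ker P_{\Gamma_2}$, which holds for commuting idempotents: $x = (I-P_{\Gamma_2})x + P_{\Gamma_2}x$, with $P_{\Gamma_2}x\in\ker P_{\Gamma_1}$. This gives
$$\ker(P_\Gamma)= X_T(\overline{\exte(\Gamma_1)})+X_T(\overline{\exte(\Gamma_2)})\subseteq X_T(\overline{\exte(\Gamma_1)}\cup\overline{\exte(\Gamma_2)}) = X_T(\overline{\exte(\Gamma)}).$$
The last equality uses that the complement of $\inte(\Gamma_1)\cap\inte(\Gamma_2)$ is $\overline{\exte(\Gamma_1)}\cup\overline{\exte(\Gamma_2)}$. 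Conversely, take $x\in X_T(\overline{\exte(\Gamma)})$ and write $x=P_\Gamma x+(I-P_\Gamma)x$. Then $P_\Gamma x$ lies in $X_T(\overline{\inte(\Gamma)})\cap X_T(\overline{\exte(\Gamma)}) = X_T(\Gamma)$, and $\Gamma\subseteq \Gamma_1\cup\Gamma_2$. A vector whose local spectrum lies in $\Gamma$ and also in $\overline{\inte(\Gamma_1)}$ has local spectrum in $\Gamma_1$ on the $\Gamma_1$-part, and similarly for $\Gamma_2$. Using $X_T(\Gamma_k)=\{0\}$ and the same splitting argument as above, we get $P_\Gamma x=0$.

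Finally, non-triviality of both subspaces follows from the hypotheses that $\overline{\inte(\Gamma)}$ and $\overline{\exte(\Gamma)}$ meet $\sigma(T)$, read in the same way as in the paper's earlier setups. Together with Proposition \ref{proposicion spectral idempotent}, this yields the plain spectral cut with $P_\Gamma=P_{\Gamma_1}P_{\Gamma_2}$.
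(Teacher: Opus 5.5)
Your outline matches the paper's proof. Your justification that $P_{\Gamma_1}$ and $P_{\Gamma_2}$ commute, the identity $\ran(P_{\Gamma_1}P_{\Gamma_2})=X_T(\overline{\inte(\Gamma_1)}\cap\overline{\inte(\Gamma_2)})$, and the inclusion $\ker(P_{\Gamma_1}P_{\Gamma_2})=\ker P_{\Gamma_1}+\ker P_{\Gamma_2}\subseteq X_T(\overline{\exte(\Gamma)})$ are all correct.

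The gap is exactly the step you flagged, together with its twin in the kernel part (``the same splitting argument \ldots\ gives $P_\Gamma x=0$''). The paper does not close it either. It simply asserts $X_T(\overline{\inte(\Gamma_1)})\cap X_T(\overline{\inte(\Gamma_2)})=X_T(\overline{\inte(\Gamma)})$ and $X_T(\Gamma)\subseteq X_T(\Gamma_1)$. The latter does not follow, because $\Gamma$ is made of pieces of $\Gamma_1$ \emph{and} of $\Gamma_2$. Under the SVEP alone, $X_T(F\cup F')=X_T(F)\oplus X_T(F')$ is available only for \emph{disjoint} closed sets. Moreover, $X_T(\Gamma_1)=X_T(\Gamma_2)=\{0\}$ does not force $X_T(F)=\{0\}$ for a closed $F\subseteq\Gamma_1\cup\Gamma_2$ that passes from one curve to the other through a point of $\Gamma_1\cap\Gamma_2$. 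So the missing step cannot be supplied from the stated hypotheses; the statement itself fails.

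Here is a counterexample. Let $M_n=n!\prod_{k=1}^n\log(k+2)$ and let $\mathcal B$ be the Banach space of $f\in C^\infty[-1,1]$ with $\|f\|=\sup_n\|f^{(n)}\|_\infty/M_n<\infty$.
\begin{itemize}
\item Since $\sum M_{n-1}/M_n=\infty$, this Denjoy--Carleman class is quasianalytic.
\item Since $M_n/(nM_{n-1})\to\infty$, Leibniz' rule shows that multiplication by $t$ and by $(\lambda-t)^{-1}$, for $\lambda\notin[-1,1]$, are bounded. Hence $S=M_t$ has $\sigma(S)=[-1,1]$, and every $\lambda-S$ is injective, which gives the SVEP.
\item If $\lambda\in\rho_S(f)\cap[-1,1]$, evaluating $(\mu-t)G(\mu)=f$ at $t=\mu$ shows that $f$ vanishes on an interval, so $f=0$. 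Thus $\sigma_S(f)=[-1,1]$ for every $f\neq0$.
\end{itemize}
Take $\inte(\Gamma_1)=\D\setminus\{\PR z\le0,\ \Impart z\le0\}$ and $\inte(\Gamma_2)=\D\setminus\{\PR z\ge0,\ \Impart z\le0\}$. Then $\inte(\Gamma)=\D\cap\{\Impart z>0\}$ and $[-1,1]\subseteq\Gamma$, while $\Gamma_1\cap[-1,1]=[-1,0]\cup\{1\}$ and $\Gamma_2\cap[-1,1]=[0,1]\cup\{-1\}$. Let $T=S\oplus N$, where $N$ is multiplication by $z$ on $L^2$ of area measure on a small disc inside $\inte(\Gamma)$ together with a small disc outside $\overline{\D}$. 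Then:
\begin{itemize}
\item $X_S(\Gamma_1)=X_S(\Gamma_2)=\{0\}$, and $T$ admits plain spectral cuts along $\Gamma_1$ and $\Gamma_2$.
\item All hypotheses of the statement hold, even with $\inte(\Gamma)$ and $\exte(\Gamma)$ in place of their closures.
\item Nevertheless $\mathcal B\oplus0\subseteq X_T(\overline{\inte(\Gamma)})\cap X_T(\overline{\exte(\Gamma)})$. So there is no plain spectral cut along $\Gamma$, and $1\oplus0\in X_T(\overline{\exte(\Gamma)})$ is fixed by $P_{\Gamma_1}P_{\Gamma_2}$.
\end{itemize}
An analogous quasianalytic ``whisker'' defeats your range identification as well. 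Glue functions holomorphic on $\inte(\Gamma)$, with all derivatives, to quasianalytic functions on an arc of $\Gamma_1\cap\Gamma_2$ along which the two interiors lie on opposite sides.

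Your argument, and the paper's, does go through under the extra hypothesis $\Gamma_1\cap\Gamma_2\cap\sigma(T)=\varnothing$. In that case $\overline{\inte(\Gamma_1)}\cap\overline{\inte(\Gamma_2)}\cap\sigma(T)\subseteq\overline{\inte(\Gamma)}$. Also, $\overline{\inte(\Gamma_1)}\cap\overline{\inte(\Gamma_2)}\cap\overline{\exte(\Gamma)}\cap\sigma(T)$ is the union of the \emph{disjoint} closed sets $\Gamma_1\cap\overline{\inte(\Gamma_2)}\cap\sigma(T)$ and $\Gamma_2\cap\overline{\inte(\Gamma_1)}\cap\sigma(T)$. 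The disjoint splitting then applies, together with $X_T(\Gamma_k)=\{0\}$.

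Finally, your non-triviality step is also unjustified. The condition $\overline{\inte(\Gamma)}\cap\sigma(T)\neq\varnothing$ does not exclude $X_T(\overline{\inte(\Gamma)})=\{0\}$. For example, take a normal operator whose spectral measure vanishes on $\overline{\inte(\Gamma)}$ but whose support touches $\Gamma$. You need $\inte(\Gamma)\cap\sigma(T)\neq\varnothing$, and similarly for $\exte(\Gamma)$, combined with $\sigma(T|_{X_T(F)})\subseteq F$.
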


\begin{proof}
Let $P_\Gamma=P_{\Gamma_1}P_{\Gamma_2}\in \biconm{T}.$ We will show that $\ran(P_\Gamma) = X_T(\overline{\inte(\Gamma)})$ and $\ker(P_\Gamma) = X_T(\overline{\exte(\Gamma)})$, which will show the statement.

First, recall that $\ran(P_\Gamma)\subseteq  X_T(\overline{\inte(\Gamma_1)})\cap X_T(\overline{\inte(\Gamma_2)}) = X_T(\overline{\inte(\Gamma)}).$ Now, if $x\in X_T(\overline{\inte(\Gamma)})=X_T(\overline{\inte(\Gamma_1)})\cap X_T(\overline{\inte(\Gamma_2)}),$  there exist $y,z\in X$ such that $x=P_{\Gamma_1}y=P_{\Gamma_2}z.$ We have
$$
P_\Gamma x = P_{\Gamma_1}P_{\Gamma_2}x = P_{\Gamma_1}P_{\Gamma_2}z = P_{\Gamma_1}y = x,
$$
so $\ran(P_\Gamma) =X_T(\overline{\inte(\Gamma)}).$

Now, let $x\in X_T(\overline{\exte(\Gamma)})$. Observe that $P_\Gamma x = P_{\Gamma_1}P_{\Gamma_2}x = P_{\Gamma_2}P_{\Gamma_1}x$ belongs to
$$X_T(\overline{\exte(\Gamma)})\cap X_T(\overline{\inte(\Gamma_1)})\cap X_T(\overline{\inte(\Gamma_2)}) = X_T(\Gamma) \subseteq  X_T(\Gamma_1) = \{0\}, $$ so $x\in \ker(P_\Gamma)$.

To prove that $\ker(P_\Gamma) \subseteq   X_T(\overline{\exte(\Gamma)})$,  note that $X_T(\overline{\exte(\Gamma_1)})+X_T(\overline{\exte(\Gamma_2)})\subseteq   X_T(\overline{\exte(\Gamma)})$. Now, let $x\in \ker(P_\Gamma)$ and write $x=P_{\Gamma_1}x+(I-P_{\Gamma_1})x$. Observe that $P_{\Gamma_2}P_{\Gamma_1} x =0$, so $P_{\Gamma_1}x \in \ker(P_{\Gamma_2}) = X_T(\overline{\exte(\Gamma_2)})$. Since $(I-P_{\Gamma_1})x \in X_T(\overline{\exte(\Gamma_1)})$, it follows that $x\in X_T(\overline{\exte(\Gamma_1)})+X_T(\overline{\exte(\Gamma_2)})\subseteq   X_T(\overline{\exte(\Gamma)})$ which concludes the proof.
\end{proof}

\medskip

We conclude the section by showing that a bounded linear operator $T$ with the SVEP that admits plain spectral cuts along a sufficiently rich family of horizontal and vertical lines, not necessarily forming an exhaustive cutting family, is decomposable.

\medskip

Assume $T$ is a linear bounded operator with the SVEP such that $\sigma(T)\subseteq  \D$ and $\sigma(T)$ does not lie in a horizontal or a vertical line. Define
    $$
    a=\min_{z\in \sigma(T)}\PR(z), \quad b=\max_{z\in \sigma(T)}\PR(z), \quad c=\min_{z\in \sigma(T)}\Impart(z), \quad d=\max_{z\in \sigma(T)}\Impart(z).
    $$
Suppose that for all $\x$ in a dense subset $A\subseteq  (a,b)$,  $T$ admits a plain spectral cut along the curve $\gamma_\x$, defined as
    $$     \gamma_\x :=\ell_\x \cup A_\x,     $$ where $$\ell_\x := \{z\in\overline{\D}: \PR(z)=\x\}, \qquad A_\x := \{z\in \T: \PR(z)\geq \x\}.$$
Likewise, suppose  that for all $\y$ in a dense subset $B\subseteq  (c,d)$, $T$ admits a plain spectral cut along the curve $\gamma_\y$, defined as
    $$     \gamma_\y :=\ell_\y \cup A_\y,     $$ where $$\ell_\y := \{z\in\overline{\D}: \Impart(z)=\y\}; \qquad A_\y := \{z\in \T: \Impart(z)\geq \y\}.$$
As a consequence of Proposition \ref{proposicion interseccion curvas}, it follows that for all $\x_1<\x_2$ in $A$ and $\y_1<\y_2$ in $B$, the operator $T$ admits a plain spectral cut along the curve $\gamma$ surrounding the rectangle $R=[\x_1,\x_2]\times [\y_1,\y_2]$, whenever both $R$ and $\mathbb{C}\setminus R$ intersect $\sigma(T)$. The associated projection is denoted by $P_R$.

\smallskip

Let us consider the family
    $$
    \mathcal{R} = \{[\x_1,\x_2]\times[\y_1,\y_2]\subset \C: \x_1,\x_2 \in A, \y_1,\y_2 \in B, \x_1<\x_2, \y_1<\y_2  \}.
    $$
Clearly, $T$ admits a plain spectral cut along the boundaries of all $R\in \mathcal{R}$ (whenever $R$ and $\C\setminus R$ intersect $\sigma(T)$) as well as the boundaries of their finite unions. Now, by Propositions \ref{proposicion union curvas} and \ref{proposicion union curvas que cortan}, if $R_1,\cdots,R_n \in \mathcal{R}$, there exists a projection $P_\beta = \sum_{k=1}^n P_{R_k}\in \biconm{T}$ with $\ran(P_\beta)=X_T(\bigcup_{k=1}^n R_k)$.

\smallskip

We now prove that $T$ is decomposable. Let $U,V$ be non-empty open subsets of $\mathbb{C}$ such that $\sigma(T)\subseteq U\cup V$. Choose open sets $G_1,G_2\subseteq \mathbb{C}$ with $\overline{G_1}\subseteq U$, $\overline{G_2}\subseteq V$, and $\sigma(T)\subseteq G_1\cup G_2$. Following Example~\ref{ejemplo 1}, consider a grid on $[a,b]\times[c,d]$ determined by partitions
$$
a=x_1<\cdots<x_n=b, \qquad c=y_1<\cdots<y_m=d,
$$
where $x_i\in A$ and $y_j\in B$ for all $i,j$, and whose mesh is smaller than $\delta/4$.

\smallskip

Define
    $$
    \sigma_1 = \bigcup_{([x_k,x_{k+1}]\times [y_\ell,y_{\ell+1}])\cap\overline{G_1} \neq \varnothing } [x_k,x_{k+1}]\times [y_\ell,y_{\ell+1}];
    $$
     $$
    \sigma_2 = \bigcup_{([x_k,x_{k+1}]\times[y_\ell,y_{\ell+1}])\cap\overline{G_2} \neq \varnothing } [x_k,x_{k+1}]\times [y_\ell,y_{\ell+1}].
    $$
Then, the projections given by
$$P_1 = \sum_{([x_k,x_{k+1}]\times [y_\ell,y_{\ell+1}])\cap\overline{G_1} \neq \varnothing } P_{\partial([x_k,x_{k+1}]\times [y_\ell,y_{\ell+1}])} \in \biconm{T}$$
$$P_2 = \sum_{([x_k,x_{k+1}]\times [y_\ell,y_{\ell+1}])\cap\overline{G_2} \neq \varnothing } P_{\partial([x_k,x_{k+1}]\times [y_\ell,y_{\ell+1}])} \in \biconm{T}$$
satisfy  $\ran(P_i)  = X_T(\sigma_i)$.

It is clear that $\sigma(T\mid_{ X_T(\sigma_1)}) \subseteq \sigma_1\cap \sigma(T)\subseteq \overline{U}$ and $\sigma(T\mid_{ X_T(\sigma_2)}) \subseteq \sigma_2\cap \sigma(T)\subseteq \overline{V}$.

Hence,  $T$ will be decomposable if we show that $X= X_T(\sigma_1)+X_T(\sigma_2).$ By Proposition \ref{proposicion union curvas que cortan}
    $$\sum_{([x_k,x_{k+1}]\times[y_\ell,y_{\ell+1}])\cap\sigma(T)\neq \varnothing } P_{\partial([x_k,x_{k+1}]\times [y_\ell,y_{\ell+1}])} = I_X,$$ since $$
    \sigma(T) \subseteq \bigcup_{([x_k,x_{k+1}]\times {[y_\ell,y_{\ell+1}]})\cap\sigma(T)\neq \varnothing } [x_k,x_{k+1}]\times {[y_\ell,y_{\ell+1}]}.
    $$
   At this point, observe that every rectangle of the grid that intersects $\sigma(T)$ cuts $\overline{G_1}$ or $\overline{G_2},$ so
   $$
   X= \ran\left(\sum_{([x_k,x_{k+1}]\times {[y_\ell,y_{\ell+1}]})\cap\sigma(T)\neq \varnothing } P_{[x_k,x_{k+1}]\times [y_\ell,y_{\ell+1}]} \right) \subseteq \ran(P_1)+\ran(P_2)=X_T(\sigma_1)+X_T(\sigma_2),
   $$
   and therefore $T$ is a decomposable operator, as claimed.

\section{Plain spectral cuts and local resolvent growth}\label{sec 4}

In this section we show that operators satisfying a suitable local resolvent growth condition along a curve $\gamma$ admit plain spectral cuts. In addition, we prove that the associated unconventional functional calculus admits an explicit integral representation. We further show that, for a broad class of operators that may fail to admit a plain spectral cut, the same local resolvent growth condition still yields a weaker form of spectral cut and, consequently, a corresponding unconventional functional calculus.
As an application of these ideas, we obtain a result that produces non-trivial closed hyperinvariant subspaces for the operator, extending previous results by Chalendar in \cite{Chalendar1997}.

\medskip

Before stating the main result of this section, recall that a bounded linear operator $T$ has the \emph{Dunford property $(C)$} if for every closed subset $F\subseteq \mathbb{C}$ the subspace $X_T(F)$ is closed in $X$.

\medskip

\begin{theorem}\label{teorema punto expuesto}
Let $X$ be a reflexive Banach space and let $T$ be a bounded linear operator on $X$ with the Dunford property $(C)$. Let $\gamma$ be a positively oriented rectifiable Jordan curve such that
\begin{enumerate}
    \item[(i)] $\gamma\cap \sigma(T)$ is non-empty and has zero arc-length measure;
    \item[(ii)] $\inte(\gamma)\cap\sigma(T)\neq\varnothing$ and $\exte(\gamma)\cap\sigma(T)\neq\varnothing$.
\end{enumerate}
Assume that, for each $x\in X$ and $y\in X^*$, the map
\begin{equation}\label{condicion integral}
z\in \gamma \longmapsto \langle (z I-T)^{-1}x,y\rangle
\end{equation}
is integrable. Then $T$ admits a plain spectral cut along $\gamma$ if and only if $X_T(\gamma)=\{0\}$. In this case, the associated idempotent $P_\gamma$ is given by
$$
P_\gamma x=\frac{1}{2\pi i}\int_\gamma (z I-T)^{-1}x\,dz,
\qquad x\in X.
$$
\end{theorem}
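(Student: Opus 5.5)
The forward implication is immediate: $X_T(\overline{\inte(\gamma)})\cap X_T(\overline{\exte(\gamma)})=X_T(\gamma)$, so a plain spectral cut forces $X_T(\gamma)=\{0\}$. For the converse, assume $X_T(\gamma)=\{0\}$. We build the candidate idempotent directly as a weak integral and then check its range and kernel.

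\emph{Step 1: defining $P$.} For each $x\in X$ the map $y\mapsto \frac{1}{2\pi i}\int_\gamma \langle (zI-T)^{-1}x,y\rangle\,dz$ is a linear functional on $X^*$. Integrability of \eqref{condicion integral} for every pair $(x,y)$ gives boundedness via a closed graph argument: the map $y\mapsto \langle (\cdot\, I-T)^{-1}x,y\rangle\in L^1(\gamma)$ has closed graph, since $L^1$ convergence implies a.e.\ convergence along a subsequence, while pointwise convergence holds off $\gamma\cap\sigma(T)$, which is null by (i). Since $X$ is reflexive, this functional is given by an element $Px\in X$. The same closed graph argument, now in $x$, shows that $P$ is bounded, so $P$ is a Gelfand--Pettis integral in $\mathcal{L}(X)$. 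The operator $P$ commutes with every $A\in\{T\}'$, because $A$ commutes with each resolvent; hence $P\in\biconm{T}$.

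\emph{Step 2: $P$ agrees with the local integral \eqref{AA}.} Let $x\in M=X_T(\inte(\gamma))+X_T(\exte(\gamma))$. For $z\in\gamma\setminus\sigma(T)$ we have $(zI-T)^{-1}x=x_T(z)$, and this set has full measure. Therefore $Px=\frac{1}{2\pi i}\int_\gamma x_T(z)\,dz$, which equals the $X_T(\inte(\gamma))$-component $u$ of $x$, as in the computation preceding \eqref{AA}. This makes the bound \eqref{acotacion norma x_T} automatic, with $C=\|P\|$. By property $(C)$ both $X_T(\overline{\inte(\gamma)})$ and $X_T(\overline{\exte(\gamma)})$ are closed, and $T$ has the SVEP, since $(C)$ implies it. Theorem \ref{teorema plain spectral cut local resolvent} then yields the plain spectral cut, provided $M$ is dense.

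\emph{Step 3: density of $M$, or bypassing it.} This is the main obstacle. I would try to avoid density by showing directly that $\ran P\subseteq X_T(\overline{\inte(\gamma)})$ and $\ran(I-P)\subseteq X_T(\overline{\exte(\gamma)})$. For $\lambda\notin\overline{\inte(\gamma)}$, the resolvent identity gives a candidate local resolvent for $Px$:
$$
f(\lambda)=\frac{1}{2\pi i}\int_\gamma \frac{(zI-T)^{-1}x}{\lambda-z}\,dz,
$$
which is holomorphic off $\overline{\inte(\gamma)}$. Substituting $(zI-T)^{-1}=(\lambda I-T)^{-1}+(\lambda-z)(\lambda I-T)^{-1}(zI-T)^{-1}$ gives $(\lambda I-T)f(\lambda)=Px$ for $\lambda\in\rho(T)\cap\exte(\gamma)$, using $\int_\gamma dz/(\lambda-z)=0$. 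Because $(\lambda I-T)f(\lambda)-Px$ is holomorphic on the connected domain $\exte(\gamma)$, it vanishes there identically, and $\sigma_T(Px)\subseteq\overline{\inte(\gamma)}$. The same computation with $\lambda\in\inte(\gamma)$, where $\int_\gamma dz/(\lambda-z)=-2\pi i$, gives $\sigma_T((I-P)x)\subseteq\overline{\exte(\gamma)}$. Here the integrability hypothesis again makes the vector integrals well defined, since $1/(\lambda-z)$ is bounded on $\gamma$. This step is the delicate one: it requires a Fubini-type exchange of the scalar pairing, and the identity must extend across $\sigma(T)\cap\exte(\gamma)$ through the identity theorem, which needs $\rho(T)\cap\exte(\gamma)$ to meet each component of $\exte(\gamma)$ (true, since that set contains a neighbourhood of $\infty$ and $\exte(\gamma)$ is connected).

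\emph{Step 4: conclusion.} Steps 1--3 give $X=X_T(\overline{\inte(\gamma)})+X_T(\overline{\exte(\gamma)})$, and the sum is direct because the intersection is $X_T(\gamma)=\{0\}$. Both summands are non-trivial by (ii) together with property $(C)$, since $\sigma(T|_{X_T(F)})\subseteq F$ and the spectra of the two restrictions must together cover $\sigma(T)$. Finally, $P$ is the identity on $X_T(\overline{\inte(\gamma)})$ and vanishes on $X_T(\overline{\exte(\gamma)})$; this follows from Step 3 and the uniqueness of the decomposition. Hence $P=P_\gamma$, which gives the stated integral formula.
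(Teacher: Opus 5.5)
Your proposal is correct and follows essentially the paper's route: reflexivity makes the weak integral a vector $Px\in X$, the resolvent identity with $\int_\gamma dz/(\lambda-z)\in\{0,-2\pi i\}$ puts $Px$ in $X_T(\overline{\inte(\gamma)})$ and $(I-P)x$ (the paper's $x^-$, written there as an integral over the cycle formed by a large curve $\tau$ and $\gamma$ reversed) in $X_T(\overline{\exte(\gamma)})$, and property $(C)$, non-triviality via spectra of restrictions, and uniqueness of the decomposition finish the argument, so your Step 2 detour and the separate proof that $P$ is bounded and lies in $\{T\}''$ are not needed. For the identity-theorem step inside $\gamma$ you omitted one small check, that $\rho(T)\cap\inte(\gamma)\neq\varnothing$; it holds because otherwise $\gamma\subseteq\overline{\inte(\gamma)}\subseteq\sigma(T)$, contradicting (i).
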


\smallskip

\noindent Before proving Theorem \ref{teorema punto expuesto}, a few comments are in order:

\begin{enumerate}
\item The reflexivity of $X$ is assumed in order to ensure that the Dunford and Pettis integrals on $X$ coincide. This guarantees that condition \eqref{condicion integral} implies that the vector
$$
\frac{1}{2\pi i}\int_\gamma (z I-T)^{-1}x\,dz
$$
is well defined and belongs to $X$ for every $x\in X$. If $X$ is not reflexive, condition \eqref{condicion integral} may be replaced by the stronger requirement
\begin{equation}\label{non-reflexive}
\int_\gamma \|(z I-T)^{-1}x\|\,|dz|<\infty,
\end{equation}
for all $x\in X$.

\item The hypothesis of $T$ enjoying Dunford's property $(C)$ in Theorem \ref{teorema punto expuesto} may be relaxed to merely requiring that $\Xint$ and $\Xext$ are closed.

\item Let $T$ be a bounded linear operator on $X$, and let $\gamma$ be a positively oriented rectifiable Jordan curve that intersects $\sigma(T)$ only at finitely many points $\{z_1,\ldots,z_n\}$. Assume moreover that both $\mathrm{int}(\gamma)\cap\sigma(T)$ and $\mathrm{ext}(\gamma)\cap\sigma(T)$ are non-empty. Then $\gamma$ satisfies hypotheses (i) and (ii) of Theorem~\ref{teorema punto expuesto}. In this situation, the condition $X_T(\gamma)=\{0\}$ is equivalent to requiring that $X_T(\{z_k\})=\{0\}$ for all $k=1,\ldots,n$.

Recall also that, with respect to the integrability condition \eqref{condicion integral}, the quantity $\|(z I-T)^{-1}x\|$ remains bounded when $z$ stays away from the points $z_1,\ldots,z_n$. Consequently, it suffices to require the integrability of $\|(z I-T)^{-1}x\|$ only on the portions of $\gamma$ lying in neighborhoods of $z_1,\ldots,z_n$.

\item Under the hypotheses of Theorem~\ref{teorema punto expuesto}, note that $(z I-T)^{-1}x$ coincides with the local resolvent function $x_T(z)$ for all $z\in\gamma\setminus\sigma(T)$. Since $\gamma\cap\sigma(T)$ is assumed to have zero arc-length measure, condition \eqref{condicion integral} is equivalent to requiring that the map $z\in\gamma\mapsto \pe{x_T(z),y}$ be integrable for all $x\in X$ and $y\in X^*$.
\end{enumerate}

\medskip

\medskip

\begin{demode}\emph{ Theorem \ref{teorema punto expuesto}.}
Let us prove that $X= X_T(\overline{\inte(\gamma)})\oplus X_T(\overline{\exte(\gamma)})$ as a topological direct sum if and only if $X_T(\gamma)=\{0\}$, and the statement of the theorem will follow.

First, observe that
$$
X_T(\overline{\operatorname{int}(\gamma)}) \cap X_T(\overline{\operatorname{ext}(\gamma)})
= X_T(\gamma \cap \sigma(T)).
$$
Therefore, if $X_T(\gamma \cap \sigma(T)) \neq \{0\}$, then $T$ does not admit a plain spectral cut along $\gamma$.

\smallskip

Now assume that $X_T(\gamma \cap \sigma(T)) = \{0\}$. Since $T$ is assumed to satisfy Dunford's property $(C)$ it suffices to show that
$$
X = X_T(\overline{\operatorname{int}(\gamma)}) + X_T(\overline{\operatorname{ext}(\gamma)}).
$$

\smallskip

Let $\tau$  be a rectifiable Jordan curve with positive orientation such that $\sigma(T)\subseteq \inte(\tau)$ and $\overline{\inte(\gamma)}\subseteq \inte(\tau)$ and consider the admissible cycle $\Gamma$ formed by $\tau$ and the curve $\gamma$ with negative orientation. Now, let us consider, for each vector $x\in X$, the vectors
$$
x^+ = \frac{1}{2\pi i} \int_\gamma (z I-T)^{-1}x\ dz, \qquad x^- = \frac{1}{2\pi i} \int_\Gamma (z I-T)^{-1}x\ dz.
$$
Recall that $(z I-T)^{-1}x$ is well defined at almost every point $z$ of both $\gamma$ and $\Gamma$, so the vectors $x^+$ and $x^-$ are well defined.
Since the map $z\in \gamma \mapsto (z I-T)^{-1}x$ is Dunford integrable, the reflexivity of $X$ and \cite[Theorem 11.55]{AB} yield that both $x^+$ and $x^-$ belong to $X$.  Now, let us show that $x^+ \in X_T(\overline{\inte(\gamma)})$. For all $w\in \rho(T)\setminus \overline{\inte(\gamma)}$,
$$(w I-T)^{-1}(z I-T)^{-1} = ( (z I-T)^{-1}-(w I-T)^{-1})(w-z)^{-1},$$
so
$$
(w I-T)^{-1}x^+ = \frac{1}{2\pi i}\int_{\gamma} \frac{1}{w-z}(z I-T)^{-1}x \ dz - \frac{1}{2\pi i}\int_{\gamma} \frac{1}{w-z}(w I-T)^{-1}x \ dz.
$$
Now, the second integral vanishes by Cauchy's Theorem, while the first integral is clearly analytic on $\exte(\gamma)$. This implies that $(w I-T)^{-1}x^+$ admits an analytic extension to $\C \setminus \overline{\inte(\gamma)}$, and hence $x^+ \in X_T(\overline{\inte(\gamma)})$.

Analogously, one can show that $x^- \in X_T(\overline{\exte(\gamma)})$. Therefore,
$$
X = X_T(\overline{\inte(\gamma)}) + X_T(\overline{\exte(\gamma)}),
$$
as desired.

\smallskip

To see that the spectral cut is non-trivial, suppose that
$X_T(\overline{\inte(\gamma)}) = \{0\}$.
Then $X = X_T(\overline{\exte(\gamma)})$, which is a contradiction, since it would imply that
$\sigma(T) \subseteq \overline{\exte(\gamma)}$.
However, this is impossible because $\sigma(T) \cap \inte(\gamma) \neq \varnothing$.
An analogous argument shows that $X_T(\overline{\exte(\gamma)}) \neq \{0\}$.

\smallskip

Finally, let $P_\gamma$ be the bounded idempotent such that
$$
\ran(P_\gamma) = X_T(\overline{\inte(\gamma)})
\quad\text{and}\quad
\ker(P_\gamma) = X_T(\overline{\exte(\gamma)}).
$$
Let $x\in X$ and write $x = x^+ + x^-$ as above. Recalling that
$x^+ \in X_T(\overline{\inte(\gamma)})$ and $x^- \in X_T(\overline{\exte(\gamma)})$, we obtain
$$
P_\gamma x = P_\gamma x^+ + P_\gamma x^- = x^+
= \frac{1}{2\pi i} \int_\gamma (zI - T)^{-1}x \, dz,
$$
which completes the proof.
\end{demode}

\smallskip

\smallskip

The following result provides an explicit integral representation for the unconventional functional calculus associated with the plain spectral cuts obtained in Theorem~\ref{teorema punto expuesto} extending, in particular, Theorem~\ref{teorema expresion integral calculo funcional inconvencional}.

\begin{corollary}\label{corolario calculo funcinonal local resolvent}
Let $X$ be a reflexive Banach space and let $T\in \mathcal{L}(X)$ satisfy Dunford's property $(C)$. Let $\gamma$ be a positively oriented rectifiable Jordan curve such that
\begin{enumerate}
\item[(i)] $\gamma \cap \sigma(T)$ is non-empty and has zero arc-length measure;
\item[(ii)] $\inte(\gamma)\cap\sigma(T)\neq\varnothing$ and $\exte(\gamma)\cap\sigma(T)\neq\varnothing$.
\end{enumerate}
Assume that for every $x\in X$ the map
$$
z\in\gamma \longmapsto \|(zI-T)^{-1}x\|
$$
is integrable and that $X_T(\gamma)=\{0\}$. Let $f:G\to\mathbb{C}$ be holomorphic on an open set $G\subset\mathbb{C}$ containing $\overline{\inte(\gamma)}$. Then
    $$
f_{\gamma}(T)x=\frac{1}{2\pi i}\int_{\gamma} f(z)(zI-T)^{-1}x\,dz
    $$
for every $x\in X$.
\end{corollary}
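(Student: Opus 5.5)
By Theorem~\ref{teorema punto expuesto}, the hypotheses already give a plain spectral cut along $\gamma$. Indeed, integrability of $\|(zI-T)^{-1}x\|$ implies integrability of the scalar maps in \eqref{condicion integral}, and $X_T(\gamma)=\{0\}$ is assumed. The same theorem gives $P_\gamma x=\frac{1}{2\pi i}\int_\gamma (zI-T)^{-1}x\,dz$. So $f_\gamma(T)=f(T\mid_{\ran(P_\gamma)})P_\gamma$ is well defined, and the task is to identify it with the integral
$$
F x:=\frac{1}{2\pi i}\int_\gamma f(z)(zI-T)^{-1}x\,dz .
$$
Since $f$ is bounded on the compact set $\gamma$, this integral converges absolutely (Bochner sense) for every $x\in X$.

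\emph{Step 1: both sides are bounded.} For $F$, I would apply the uniform boundedness principle to the finite integrals. Alternatively, I would note that $x\mapsto \int_\gamma\|(zI-T)^{-1}x\|\,|dz|$ is a lower semicontinuous seminorm that is finite everywhere on the Banach space $X$, hence continuous by a Baire category argument. This gives $\|Fx\|\le C\|x\|$.

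\emph{Step 2: the two sides agree on a dense set.} By Theorem~\ref{teorema expresion integral calculo funcional inconvencional}, for $x\in M:=X_T(\inte(\gamma))\dotplus X_T(\exte(\gamma))$ we have $f_\gamma(T)x=\frac{1}{2\pi i}\int_\gamma f(z)x_T(z)\,dz$. On $\gamma\setminus\sigma(T)$, which has full arc-length measure, $x_T(z)=(zI-T)^{-1}x$, so $f_\gamma(T)x=Fx$ on $M$. It remains to show that $M$ is dense; this is the main obstacle.

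\emph{Step 3 (the hard part): bypassing density.} I would first try to avoid density altogether and work directly with the decomposition from the proof of Theorem~\ref{teorema punto expuesto}. Write $x=u+v$ with $u=P_\gamma x$ and $v=(I-P_\gamma)x$. On $\gamma\setminus\sigma(T)$ the resolvent splits along the invariant decomposition:
$$
(zI-T)^{-1}x=(zI-T\mid_{\ran P_\gamma})^{-1}u+(zI-T\mid_{\ker P_\gamma})^{-1}v .
$$
It then suffices to prove the two identities
$$
\frac{1}{2\pi i}\int_\gamma f(z)(zI-T_1)^{-1}u\,dz=f(T_1)u, \qquad \int_\gamma f(z)(zI-T_2)^{-1}v\,dz=0,
$$
where $T_1=T\mid_{\ran P_\gamma}$ and $T_2=T\mid_{\ker P_\gamma}$. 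The difficulty is that $\sigma(T_1)$ may touch $\gamma$, so the Dunford calculus cannot be applied along $\gamma$ directly. To handle this, I would approximate $\gamma$ from outside by curves $\gamma_\varepsilon\subset U\cap\exte(\gamma)$ on which $f(T_1)u$ is given by the Cauchy integral. For the $T_2$ part I would use curves inside $\inte(\gamma)$, where the integral vanishes by Cauchy's theorem. Passing $\varepsilon\to0$ requires control of the resolvent near $\gamma\cap\sigma(T)$.

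A cleaner route, which I would try first, is the resolvent-identity trick from the proof of Theorem~\ref{teorema punto expuesto}. For polynomials $f$, the identity follows from $p(z)-p(T)=q(z,T)(zI-T)$, which gives $\frac{1}{2\pi i}\int_\gamma p(z)(zI-T)^{-1}x\,dz=p(T)P_\gamma x+\frac{1}{2\pi i}\int_\gamma q(z,T)x\,dz$. The last integral is an integral of a polynomial in $z$ over a closed curve, so it vanishes, and $p(T)P_\gamma=p_\gamma(T)$ by Theorem~\ref{teorema calculo bien definido}. For rational $f$ with poles off $\overline{\inte(\gamma)}$ and off $\sigma(T)$, the same argument works with the partial-fraction resolvent identity. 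If a pole $w$ lies in $\sigma(T)\cap\exte(\gamma)$, I would replace $(wI-T)^{-1}$ by $(wI-T_1)^{-1}P_\gamma$. Finally, Runge's theorem gives rational $f_n\to f$ uniformly on a neighbourhood of $\overline{\inte(\gamma)}$. Theorem~\ref{Teorema CF}(v) gives $(f_n)_\gamma(T)\to f_\gamma(T)$, while the integrals converge by the $L^1$ bound on $\|(zI-T)^{-1}x\|$ together with uniform convergence on $\gamma$.
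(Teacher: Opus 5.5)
Your proposal is correct. The argument that actually proves the statement is the polynomial/Runge route at the end of Step~3, and it differs genuinely from the paper's. Steps~1--2 and the curve-approximation idea can be dropped: the density of $X_T(\inte(\gamma))\dotplus X_T(\exte(\gamma))$ is never established, and passing $\gamma_\varepsilon\to\gamma$ would need resolvent bounds near $\gamma\cap\sigma(T)$ that the hypotheses do not supply.

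The paper proceeds in two stages.
\begin{enumerate}
\item[(a)] It proves the formula for $x\in\ran(P_\gamma)$. It takes a Jordan curve $\tau\subseteq G$ around $\overline{\inte(\gamma)}$, splits $\int_\tau=\int_\gamma+\int_\Gamma$, and observes that the two pieces lie in $\ran(P_\gamma)$ and $\ker(P_\gamma)$, so the $\Gamma$-piece must vanish.
\item[(b)] It reaches arbitrary $x$ through $f_\gamma(T)x=f_\gamma(T)P_\gamma x$, the integral formula for $P_\gamma$ from Theorem~\ref{teorema punto expuesto}, and a Fubini interchange over $\gamma\times\gamma$.
\end{enumerate}
You instead get the identity on all of $X$ at once for polynomials. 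This uses only $p(z)-p(T)=q(z,T)(zI-T)$, the same formula for $P_\gamma$, and Cauchy's theorem for $z\mapsto q(z,T)x$. You then approximate, using Theorem~\ref{Teorema CF}(v) on the operator side and $\int_\gamma\|(zI-T)^{-1}x\|\,|dz|<\infty$ on the integral side.

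What each route buys: yours needs no double integral and uses the norm-integrability hypothesis exactly where it is needed, at the cost of Runge's theorem. The paper's argument is purely structural. Incidentally, the bound $\int_\gamma\|(\xi I-T)^{-1}y\|\,|d\xi|\le C\|y\|$ from your Step~1 is precisely what makes the paper's Fubini step legitimate, via Tonelli.

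One step needs tightening: your remedy for a pole $w\in\sigma(T)\cap\exte(\gamma)$ is not justified as written. With $x=u+v$ and $u=P_\gamma x$, the $u$-part is fine because $w\notin\sigma(T_1)$. For $v\in\ker(P_\gamma)$, however, you would still have to show $\int_\gamma (w-z)^{-1}(zI-T_2)^{-1}v\,dz=0$ with $w$ possibly in $\sigma(T_2)$. That is the same obstacle as in your first route.

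You do not need rational functions at all. Choose a Jordan curve $\tau$ with $\overline{\inte(\gamma)}\subseteq\inte(\tau)$ and $\overline{\inte(\tau)}\subseteq G$. Since $\C\setminus\overline{\inte(\tau)}$ is connected, Runge's theorem yields polynomials $p_n\to f$ uniformly on $\overline{\inte(\tau)}$, and Theorem~\ref{Teorema CF}(v) applies with $U=\inte(\tau)$.

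Alternatively, take $h$ holomorphic near $\overline{\inte(\gamma)}$. The operator $x\mapsto\int_\gamma h(z)(zI-T)^{-1}x\,dz$ commutes with $T$. By the argument of Theorem~\ref{teorema punto expuesto}, it maps $X$ into $X_T(\overline{\inte(\gamma)})$. Hence it sends $X_T(\overline{\exte(\gamma)})$ into $X_T(\gamma)=\{0\}$, which settles the $v$-part directly.
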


\smallskip

\begin{proof}
By Theorem~\ref{teorema punto expuesto}, $T$ admits a plain spectral cut along $\gamma$ and, by Theorem~\ref{teorema calculo bien definido}, the operator $f_\gamma(T)$ is well defined and
$$
f_\gamma(T)=f(T|_{\ran(P_\gamma)})P_\gamma,
$$
where $P_\gamma$ denotes the idempotent associated with this spectral cut.
Let $\tau$ be a rectifiable Jordan curve such that
$$
\overline{\inte(\gamma)}\subseteq \inte(\tau)\subseteq G,
$$
and let $\Gamma$ be the admissible cycle formed by $\tau$ and $\gamma$, with $\gamma$ taken with negative orientation.

\smallskip
Now recall that $f_\gamma(T)=f(T|_{\ran(P_\gamma)})P_\gamma$, and observe that
$\sigma(T|_{\ran(P_\gamma)})\subseteq \inte(\tau)$. Let $x\in\ran(P_\gamma)$. Then
$$
\begin{aligned}
f_\gamma(T)x
&= \frac{1}{2\pi i}\int_\tau f(z)(zI-T|_{\ran(P_\gamma)})^{-1}x\,dz  \\
&= \frac{1}{2\pi i}\int_\tau f(z)(zI-T)^{-1}x\,dz  \\
&= \frac{1}{2\pi i}\left(
\int_\gamma f(z)(zI-T)^{-1}x\,dz
+
\int_\Gamma f(z)(zI-T)^{-1}x\,dz
\right).
\end{aligned}
$$
Repeating the arguments from the proof of Theorem~\ref{teorema punto expuesto}, it follows that the two integrals in the last equality belong to $\ran(P_\gamma)$ and $\ker(P_\gamma)$, respectively. Since $f_\gamma(T)x\in\ran(P_\gamma)$, we conclude that
$$
\frac{1}{2\pi i}\int_\Gamma f(z)(zI-T)^{-1}x\,dz=0,
$$
and therefore
$$
f_\gamma(T)x=\frac{1}{2\pi i}\int_\gamma f(z)(zI-T)^{-1}x\,dz .
$$

Finally, using the expression for $P_\gamma$ given in Theorem~\ref{teorema punto expuesto} and applying Fubini's Theorem, we obtain, for every $x\in X$,
$$
f_\gamma(T)x
= \frac{1}{2\pi i}\int_\gamma f(z)(zI-T)^{-1}(P_\gamma x)\,dz
= \frac{1}{2\pi i}\int_\gamma (\xi I-T)^{-1}\!\left(
\frac{1}{2\pi i}\int_\gamma f(z)(zI-T)^{-1}x\,dz
\right)d\xi .
$$
Again, the inner integral belongs to $\ran(P_\gamma)$, and therefore
$$
f_\gamma(T)x
= \frac{1}{2\pi i}\int_\gamma f(z)(zI-T)^{-1}x\,dz,
$$
which completes the proof.
\end{proof}

\smallskip

We now consider the situation in which the integrability condition \eqref{condicion integral} holds only for vectors $x$ in a linear manifold $M\subseteq X$.

\begin{theorem}\label{teorema densidad punto expuesto}
Let $X$ be a reflexive Banach space and let $T\in \mathcal{L}(X)$ satisfy Dunford's property $(C)$. Let $\gamma$ be a positively oriented rectifiable Jordan curve such that
\begin{enumerate}
\item[(i)] $\gamma \cap \sigma(T)$ is non-empty and has zero arc-length measure;
\item[(ii)] $\inte(\gamma)\cap\sigma(T)\neq\varnothing$ and $\exte(\gamma)\cap\sigma(T)\neq\varnothing$.
\end{enumerate}
Let $M\subseteq X$ be a dense linear manifold and assume that, for each
$x\in M$ and $y\in X^*$, the map
$$
z\in \gamma \longmapsto \langle (zI-T)^{-1}x,\,y\rangle
$$
is integrable. Moreover, assume that there exists $C>0$ such that
\begin{equation}\label{acotacion norma integral}
\left\|\int_\gamma (zI-T)^{-1}x\,dz\right\|\le C\|x\|
\end{equation}
for all $x\in M$. Then $T$ admits a plain spectral cut along $\gamma$ if and only if
$X_T(\gamma)=\{0\}$. In this case, the associated idempotent $P_\gamma$
satisfies
$$
P_\gamma x=\frac{1}{2\pi i}\int_\gamma (zI-T)^{-1}x\,dz
$$
for all $x\in M$.
\end{theorem}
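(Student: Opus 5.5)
The plan is to combine the proof of Theorem \ref{teorema punto expuesto} with the density/extension argument of Theorem \ref{teorema plain spectral cut local resolvent}. The necessity direction is immediate: since $X_T(\overline{\inte(\gamma)})\cap X_T(\overline{\exte(\gamma)}) = X_T(\gamma\cap\sigma(T))=X_T(\gamma)$, a plain spectral cut (a direct sum) forces $X_T(\gamma)=\{0\}$. For sufficiency, assume $X_T(\gamma)=\{0\}$. By Dunford's property $(C)$ both spectral subspaces are closed and their intersection is trivial. It therefore suffices to show that $X = \Xint + \Xext$; the Open Mapping Theorem then makes the sum topological. Non-triviality of both summands follows exactly as in Theorem \ref{teorema punto expuesto}, from hypothesis (ii).

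First step. For $x\in M$, define $J x = \frac{1}{2\pi i}\int_\gamma (zI-T)^{-1}x\,dz$. By reflexivity and weak integrability this is a vector of $X$ (Pettis integral), as in the proof of Theorem \ref{teorema punto expuesto}. Repeating that argument verbatim, the resolvent identity and Cauchy's theorem show that $(wI-T)^{-1}Jx$ extends analytically to $\C\setminus\overline{\inte(\gamma)}$. Hence $Jx\in\Xint$. Likewise, $x-Jx=\frac{1}{2\pi i}\int_\Gamma (zI-T)^{-1}x\,dz$, with $\Gamma$ the cycle formed by a large curve $\tau$ and $-\gamma$, lies in $\Xext$. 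Only pointwise integrability for the single vector $x\in M$ was used in that argument, so this transfers directly.

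Second step. By \eqref{acotacion norma integral}, $J$ is bounded on $M$ with $\|J\|\le C/2\pi$. Since $M$ is dense, $J$ extends uniquely to $\tilde J\in\EL(X)$. Because $\Xint$ and $\Xext$ are closed, continuity gives $\tilde J x\in\Xint$ and $(I-\tilde J)x\in\Xext$ for every $x\in X$. Approximate $x$ by $x_n\in M$: then $Jx_n\to\tilde Jx$ and $x_n-Jx_n\to x-\tilde Jx$. Thus $x=\tilde Jx+(I-\tilde J)x\in\Xint+\Xext$, and $T$ admits the plain spectral cut.

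Third step. To identify $P_\gamma$ on $M$, take $x\in M$ and note $x=Jx+(x-Jx)$ with the two pieces in $\ran(P_\gamma)$ and $\ker(P_\gamma)$ respectively. Uniqueness of the decomposition gives $P_\gamma x=Jx$, which is the stated formula. In fact $\tilde J=P_\gamma$ on all of $X$, by continuity.

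The main point needing care is the first step, where the membership $Jx\in\Xint$ must be justified. The integrand $(zI-T)^{-1}x$ is defined only off the null set $\gamma\cap\sigma(T)$. One must check that $w\mapsto\frac{1}{2\pi i}\int_\gamma \frac{(zI-T)^{-1}x}{w-z}\,dz$ is analytic off $\overline{\inte(\gamma)}$ and agrees with $(wI-T)^{-1}Jx$ for $w\in\rho(T)$. This needs weak integrability of $z\mapsto\langle (zI-T)^{-1}x,y\rangle/(w-z)$, which holds since $1/(w-z)$ is bounded on $\gamma$ for $w$ off $\gamma$. The identity is then verified weakly and lifted via reflexivity, exactly as in Theorem \ref{teorema punto expuesto}.
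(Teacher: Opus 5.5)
Your proposal is correct and follows essentially the same route as the paper. You use the argument of Theorem \ref{teorema punto expuesto} to place $\frac{1}{2\pi i}\int_\gamma (zI-T)^{-1}x\,dz$ in $\Xint$ and the complementary integral over $\Gamma$ in $\Xext$ for $x\in M$; you extend by density and the bound \eqref{acotacion norma integral}; you use closedness of the spectral subspaces to get $X=\Xint+\Xext$; and you identify $P_\gamma$ on $M$ via uniqueness of the decomposition. Your explicit check that the Cauchy-type integral inverts $(wI-T)$ off $\overline{\inte(\gamma)}$, and your remark on non-triviality of both summands, supply details the paper leaves implicit.
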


\begin{proof}
Arguing as in the proof of Theorem \ref{teorema punto expuesto}, it follows that if $X_T(\gamma)\neq\{0\}$, $T$ does not admit a plain spectral cut along $\gamma$.

For the converse, assume $X_T(\gamma) = \{0\}$ and let us show that $X=\Xint\oplus\Xext$ holds as a topological sum.

Note that the integrability of the map
$$
z\in \gamma \longmapsto \pe{(z I-T)^{-1}x,y}
$$
implies, by \cite[Theorem 11.55]{AB}, that
$$
\frac{1}{2\pi i}\int_\gamma (zI-T)^{-1}x\,dz
$$
is well defined and belongs to $X$ for every $x\in M$. Since $M$ is dense and
\eqref{acotacion norma integral} holds, the map
$$
x\in M \longmapsto \frac{1}{2\pi i}\int_\gamma (zI-T)^{-1}x\,dz
$$
extends to a bounded operator $J_\gamma$ on $X$.

The arguments in the proof of Theorem \ref{teorema punto expuesto} also yield that
$J_\gamma x\in \Xint$ for every $x\in M$ and
$$
(I-J_\gamma)x=\frac{1}{2\pi i}\int_\Gamma (zI-T)^{-1}x\,dz \in \Xext,
$$
where $\Gamma$ denotes the admissible cycle used in that proof.

Since $\Xint$ and $\Xext$ are closed, we have
$\ran(J_\gamma)\subseteq \Xint$ and $\ker(J_\gamma)\subseteq \Xext$.
Moreover, for every $x\in X$,
$$
x = J_\gamma x + (I-J_\gamma)x,
$$
and hence $X=\Xint+\Xext$. Since $\Xint\cap\Xext = X_T(\gamma)=\{0\}$, $ X=\Xint\oplus \Xext $ as a topological direct sum.

It remains to show that $J_\gamma$ coincides with the associated idempotent $P_\gamma$.
This follows directly from the uniqueness of the decomposition
$x=P_\gamma x+(I-P_\gamma)x$, and the proof is complete.
\end{proof}

An equivalent result to Corollary \ref{corolario calculo funcinonal local resolvent} holds for this setting:

\begin{corollary}
Let $X$ be a reflexive Banach space and let $T\in\mathcal{L}(X)$ have the Dunford property $(C)$. Let $\gamma$ be a positively oriented rectifiable Jordan curve such that
\begin{enumerate}
\item[(i)] $\gamma\cap\sigma(T)$ is non-empty and has zero arc-length measure;
\item[(ii)] $\inte(\gamma)\cap\sigma(T)$ and $\exte(\gamma)\cap\sigma(T)$ are both non-empty.
\end{enumerate}
Let $M\subset X$ be a dense linear manifold and assume that, for every $x\in M$ and $y\in X^*$, the map
$$
z\mapsto \langle (zI-T)^{-1}x,y\rangle ,\qquad z\in\gamma,
$$
is integrable. Suppose, moreover, that there exists $C>0$ such that
\begin{equation}\label{acotacion norma integral-}
\left\|\int_\gamma (zI-T)^{-1}x\,dz\right\|\le C\|x\|
\end{equation}
for all $x\in M$, and that $X_T(\gamma)=\{0\}$.  Let $f:G\to\C$ be holomorphic on an open set $G\subset\C$ containing $\overline{\inte(\gamma)}$. Then
$$
f_\gamma(T)x=\frac{1}{2\pi i}\int_\gamma f(z)(zI-T)^{-1}x\,dz
$$
for every $x\in M$.
\end{corollary}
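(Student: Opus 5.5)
By Theorem \ref{teorema densidad punto expuesto}, $T$ admits a plain spectral cut along $\gamma$. Its idempotent $P_\gamma$ satisfies $P_\gamma x=\frac{1}{2\pi i}\int_\gamma (zI-T)^{-1}x\,dz$ for $x\in M$. By Theorem \ref{teorema calculo bien definido}, the operator $f_\gamma(T)=f(T|_{\ran(P_\gamma)})P_\gamma$ is then well defined and bounded. The plan is to repeat the argument of Corollary \ref{corolario calculo funcinonal local resolvent}, but only for vectors in $M$. There, integrability is available, and we never need to integrate anything for general $x\in X$.

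Fix $x\in M$. First we check that $y:=\frac{1}{2\pi i}\int_\gamma f(z)(zI-T)^{-1}x\,dz$ exists in $X$. Since $f$ is bounded and continuous on $\gamma$, the scalar map $z\mapsto f(z)\langle (zI-T)^{-1}x,u\rangle$ is integrable for each $u\in X^*$. Reflexivity and \cite[Theorem 11.55]{AB} then give $y\in X$.

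Next choose a rectifiable Jordan curve $\tau$ with $\overline{\inte(\gamma)}\cup\sigma(T)\subseteq\inte(\tau)$ and $\overline{\inte(\tau)}\subseteq G$. If $G$ does not contain $\sigma(T)$, this is impossible. In that case we only ask $\overline{\inte(\gamma)}\subseteq \inte(\tau)\subseteq G$ and work on $\ran(P_\gamma)$, exactly as in the corollary. Let $\Gamma$ be the admissible cycle formed by $\tau$ and $-\gamma$, and set $y^-:=\frac{1}{2\pi i}\int_\Gamma f(z)(zI-T)^{-1}x\,dz$.

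The argument then splits into the following steps.
\begin{enumerate}
\item[(a)] Using the resolvent identity as in the proof of Theorem \ref{teorema punto expuesto}, with $f(z)$ inserted in the integrand, show that $(wI-T)^{-1}y$ extends analytically to $\C\setminus\overline{\inte(\gamma)}$. The term $\int_\gamma \frac{f(z)}{w-z}dz\,(wI-T)^{-1}x$ vanishes by Cauchy for $w\in\exte(\gamma)$. Hence $y\in\Xint=\ran(P_\gamma)$.
\item[(b)] In the same way, the integral over $\Gamma$ produces a vector in $\Xext=\ker(P_\gamma)$. This holds for the part of $\tau$ where the resolvent is analytic as well as for the part of $\gamma$.
\item[(c)] Write $P_\gamma x=\frac{1}{2\pi i}\int_\gamma (zI-T)^{-1}x\,dz$, then apply $f(T|_{\ran(P_\gamma)})$ through its Dunford integral over $\tau$. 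A Fubini interchange, valid at the weak level against functionals $u\in X^*$, combined with the resolvent identity, identifies $f_\gamma(T)x$ with $P_\gamma y$. Since $y\in\ran(P_\gamma)$, we get $P_\gamma y=y$.
\end{enumerate}

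An alternative for (c) avoids Fubini. Decompose $\frac{1}{2\pi i}\int_\tau f(z)(zI-T)^{-1}x\,dz=y+y^-$, with $y\in\ran(P_\gamma)$ by (a) and $y^-\in\ker(P_\gamma)$ by (b). Applying $P_\gamma$ and using that $f(T)$ commutes with $P_\gamma\in\biconm{T}$ gives $f(T)P_\gamma x=P_\gamma(y+y^-)=y$. When $\sigma(T)\subseteq G$, the left-hand side is exactly $f_\gamma(T)x$ by Theorem \ref{teorema calculo bien definido}.

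The main obstacle is step (c) in the general case $\sigma(T)\not\subseteq G$. There $f(T)$ is unavailable, so one must either justify the Fubini interchange only weakly, with integrability known just for $x\in M$, or reduce to the previous case. For the reduction, I would try Runge's theorem: approximate $f$ uniformly near $\overline{\inte(\gamma)}$ by rational functions $f_n$ with poles off $\overline{\inte(\gamma)}\cup\sigma(T)$, so that each $f_n$ is holomorphic near $\sigma(T)$. For each $n$ the identity then holds, since $f_n$ is holomorphic on a neighborhood of $\sigma(T)$. The left-hand sides converge by Theorem \ref{Teorema CF}(v). On the right-hand side, $\int_\gamma (f_n-f)(zI-T)^{-1}x\,dz\to0$ weakly, by dominated convergence against each $u\in X^*$, using the integrability hypothesis for $x\in M$. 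Since weak limits are unique, the formula follows.
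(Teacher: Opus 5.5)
Your proposal is correct. The argument you actually complete, namely the alternative to (c) for $\sigma(T)\subseteq G$ followed by the Runge reduction, takes a genuinely different route from the paper.

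The paper omits the proof and points to Corollary~\ref{corolario calculo funcinonal local resolvent}. There the formula is first proved on $\ran(P_\gamma)$: the Dunford integral of $f(T|_{\ran(P_\gamma)})$ over $\tau$ is split into a $\gamma$-part in $\ran(P_\gamma)$ and a $\Gamma$-part in $\ker(P_\gamma)$. The result is then transferred to general $x$ through the integral formula for $P_\gamma$ and Fubini. Your primary (c), identifying $f_\gamma(T)x$ with $P_\gamma y$, is that last step.

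Transplanted to this corollary, the paper's route integrates $(zI-T)^{-1}v$ along $\gamma$ for $v\in\ran(P_\gamma)$, e.g.\ $v=P_\gamma x$. The hypotheses do not directly cover such $v$, since $M\cap\ran(P_\gamma)$ may be $\{0\}$. Your route only integrates $(zI-T)^{-1}x$ for $x\in M$. For polynomials $p$ you get $p_\gamma(T)x=P_\gamma p(T)x=P_\gamma(y+y^-)=y$. Even more simply, you could use $p(z)(zI-T)^{-1}x=p(T)(zI-T)^{-1}x+q(z,T)x$ with $q$ a polynomial, whose $\gamma$-integral vanishes. Three ingredients then finish the proof:
\begin{enumerate}
\item[(1)] polynomial Runge approximation on a compact neighbourhood of $\overline{\inte(\gamma)}$ in $G$ with connected complement;
\item[(2)] Theorem~\ref{Teorema CF}(v) for the left-hand sides;
\item[(3)] the bound $\sup_\gamma|f_n-f|\int_\gamma|\langle(zI-T)^{-1}x,u\rangle|\,|dz|$ for the right-hand sides.
\end{enumerate}
The price is an approximation step; the gain is that nothing beyond the stated hypotheses on $M$ is used.

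Incidentally, the obstacle you flag in the direct Fubini route is not a real one. By your computation (a) with $f\equiv 1$, the map $\xi\mapsto\frac{1}{2\pi i}\int_\gamma\frac{(zI-T)^{-1}x}{\xi-z}\,dz$ solves $(\xi I-T)v(\xi)=P_\gamma x$ analytically on $\exte(\gamma)$. By the SVEP it therefore equals $(\xi I-T|_{\ran(P_\gamma)})^{-1}P_\gamma x$ on $\tau$, even where $\tau$ meets $\sigma(T)$, so no resolvent identity with $(\xi I-T)^{-1}$ is needed. Since $\textnormal{dist}(\tau,\gamma)>0$, Fubini on $\tau\times\gamma$ against $u\in X^*$ needs only the integrability of $\langle(zI-T)^{-1}x,u\rangle$ on $\gamma$. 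Cauchy's formula over $\tau$ then gives $f_\gamma(T)x=y$ directly for $x\in M$, with no Runge step.
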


The proof follows the same lines as that of Corollary \ref{corolario calculo funcinonal local resolvent}, with the obvious modifications, and is therefore omitted.

\medskip

We now provide an example of an operator satisfying the hypotheses of Theorem \ref{teorema densidad punto expuesto}, following some of the ideas by Klaja  \cite{Klaja}.

\begin{example}\label{ejemplo normal}
Let $K$ be the union of the two tangent closed discs $\overline{D}(-1,1)$ and
$\overline{D}(1,1)$, and consider the multiplication operator $T=M_z$ acting on
$X=L^p(K,dm)$, where $m$ denotes the Lebesgue measure on $K$ and
$1\le p<\infty$. It is well known that $T$ is a decomposable operator and
therefore has the Dunford property.

Let $\gamma$ be a rectifiable Jordan curve containing the segment $[-i,i]$ and
such that $\gamma\cap K=\{0\}$. Let $\mathcal{L}$ denote the linear manifold
consisting of all finite linear combinations of indicator functions $\chi_B$,
where $B$ is a Borel subset of $K$ at a positive distance from $\gamma$. It is
straightforward to verify that $\mathcal{L}$ is dense in $X$.

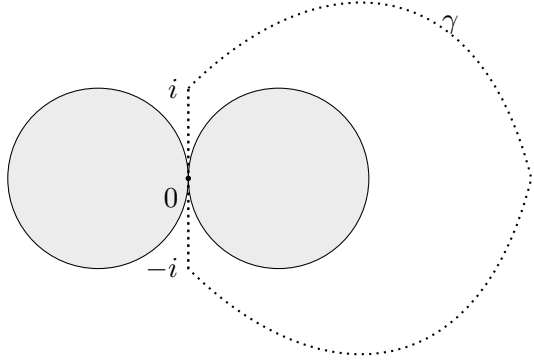
\begin{figure}[ht]
\hspace*{1,3cm}
\begin{tikzpicture}[scale=1.2]

\fill[gray!15] (-1,0) circle (1);
\fill[gray!15] (1,0) circle (1);
\draw (-1,0) circle (1);
\draw (1,0) circle (1);

\fill (0,0) circle (0.03);
\node[below left] at (0,0) {$0$};

\draw[dotted,thick] (0,-1) -- (0,1);
\node[left] at (0,1) {$i$};
\node[left] at (0,-1) {$-i$};

\draw[dotted,thick]
(0,1)
.. controls (1.4,2.4) and (3.2,2.4) .. (3.8,0)
.. controls (3.2,-2.4) and (1.4,-2.4) .. (0,-1)
-- cycle;

\node[right] at (2.7,1.7) {$\gamma$};

\end{tikzpicture}
\caption{The set $K=\overline{D}(-1,1)\cup\overline{D}(1,1)$ and a Jordan curve $\gamma$ containing $[-i,i]$ with $\gamma\cap K=\{0\}$.}
\end{figure}

Let us show that the map
$$
z\in \gamma \longmapsto \pe{(zI-T)^{-1}f,g}
$$
is integrable for every $f\in \mathcal{L}$ and $g\in X^*$. Let
$f=\sum_{k=1}^n a_k \chi_{B_k}\in \mathcal{L}$. It suffices to prove that
$\pe{(zI-T)^{-1}f,g}$ is integrable along the segment $[-i,i]$. Indeed,
\begin{equation*}
\begin{split}
\int_{-1}^1 \left|\pe{(it-T)^{-1}f,g}\right|\,dt
&\le \int_{-1}^1 \left( \sum_{k=1}^n |a_k|
\int_{B_k} \frac{|g(\xi)|}{|it-\xi|}\,dm(\xi) \right) dt .
\end{split}
\end{equation*}
Since the distance from each $B_k$ to $\gamma$ is positive, there exists a constant
$C>0$ such that
$$
\int_{-1}^1 \left( \sum_{k=1}^n |a_k|
\int_{B_k} \frac{|g(\xi)|}{|it-\xi|}\,dm(\xi) \right) dt
\le C \sum_{k=1}^n |a_k| \int_{B_k} |g(\xi)|\,dm(\xi)
\le C \|g\|_{L^{p'}(K,dm)} \sum_{k=1}^n |a_k| < \infty,
$$
where $p'$ denotes the conjugate exponent of $p$. Consequently, it follows that the function
$$
\xi \in K \longmapsto \int_\gamma (zI-T)^{-1}f(\xi)\,dz
= \int_\gamma \left(\sum_{k=1}^n a_k \frac{\chi_{B_k}(\xi)}{z-\xi}\right) dz
$$
belongs to $L^p(K,dm)$. Moreover, for almost every $\xi\in K$ we have
\begin{equation*}
\begin{split}
\int_\gamma \left(\sum_{k=1}^n a_k \frac{\chi_{B_k}}{z-\xi}\right)dz
&= \sum_{k=1}^n a_k \chi_{B_k}(\xi)\int_\gamma \frac{dz}{z-\xi} \\
&= 2\pi i\,\chi_{\inte(\gamma)}(\xi)\sum_{k=1}^n a_k \chi_{B_k}(\xi) \\
&= 2\pi i\,\chi_{\inte(\gamma)}(\xi) f(\xi).
\end{split}
\end{equation*}

\noindent Consequently,
$$
\left\|\int_\gamma (zI-T)^{-1}f\,dz\right\|
\le 2\pi\|f\|
$$
for every $f\in \EL$. Hence $T$ satisfies the hypotheses of Theorem \ref{teorema densidad punto expuesto}. Since
$$
X_T(\gamma)=X_T(\{0\})=\{0\},
$$
it follows that $T$ admits a plain spectral cut along $\gamma$. Moreover, the associated idempotent $P_\gamma$ is given by
$$
P_\gamma f=\frac{1}{2\pi i}\int_\gamma (zI-T)^{-1}f\,dz
$$
for all $f\in \EL$.
\end{example}

\medskip

\begin{remark} Recall that the operator $T$ from Example \ref{ejemplo normal} is a \textit{scalar} operator, i.e., there exists a spectral measure
$E$ defined on the Borel subsets of $\C$ such that
\begin{equation}\label{scalar operator}
T=\int_\C \lambda\, dE(\lambda).
\end{equation}
In this case (see \cite{LN00}), one has $E(\overline{\inte(\gamma)})=\Xint$.
Since this is a projection commuting with $T$, it follows that
$E(\overline{\inte(\gamma)})=P_\gamma$. Hence the plain spectral cuts yield
the same idempotents as those arising from the scalar spectral measure.

\noindent Thus our approach provides an explicit construction of the idempotent $P_\gamma$
without requiring \emph{a priori} a representation of the form \eqref{scalar operator}.
\end{remark}

\medskip

We close the section by discussing weaker forms of spectral cuts whenever $M$ is non-dense or the boundedness condition \eqref{acotacion norma integral-} does not hold.

\smallskip

\begin{theorem}\label{teorema linear manifold}
Let $X$ be a reflexive Banach space and let $T\in\mathcal \EL(X)$ have the SVEP.
Let $\gamma$ be a positively oriented rectifiable Jordan curve such that
\begin{enumerate}
\item[(i)] $\gamma\cap\sigma(T)\neq\varnothing$ and has zero arc-length measure;
\item[(ii)] $\inte(\gamma)\cap\sigma(T)\neq\varnothing$ and $\exte(\gamma)\cap\sigma(T)\neq\varnothing$.
\end{enumerate}
Let $\{0\}\subsetneq M\subseteq X$ be a linear manifold and assume that, for every
$x\in M$ and $y\in X^*$, the map
$$
z\in\gamma \mapsto \langle (zI-T)^{-1}x,y\rangle
$$
is integrable. Then for every $x\in M$ there exist $x^+\in\Xint$ and
$x^-\in\Xext$ such that $x=x^++x^-$. This decomposition is unique if and only if
$X_T(\gamma)=\{0\}$. In that case,
$$
x^+=\frac{1}{2\pi i}\int_\gamma (zI-T)^{-1}x\,dz,
\qquad
x^-=\frac{1}{2\pi i}\int_\Gamma (zI-T)^{-1}x\,dz,
$$
where $\Gamma$ denotes the admissible cycle formed by any positively oriented
Jordan curve $\tau$ surrounding $\sigma(T)$ together with $\gamma$ taken with
the clockwise orientation. Moreover, if $\Xint$ and $\Xext$ are closed, then:
\begin{enumerate}
\item[(a)] There exists a norm $\|\cdot\|_e$ on $\Xint\dotplus\Xext$ making it a Banach space.
\item[(b)] There exists an idempotent $J\in\mathcal{L}(\Xint\dotplus\Xext, \norm{\cdot}_e)$ with
$\ran(J)=\Xint$ and $\ker(J)=\Xext$.
\item[(c)] For every $x\in M$, $Jx=x^+$ and $(I-J)x=x^-$.
\end{enumerate}
\end{theorem}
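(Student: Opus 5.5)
The plan is to run the argument from the proof of Theorem \ref{teorema punto expuesto} vector by vector, using only the vectors of $M$. First, for $x\in M$ we define
$$x^+=\frac{1}{2\pi i}\int_\gamma (zI-T)^{-1}x\,dz,\qquad x^-=\frac{1}{2\pi i}\int_\Gamma (zI-T)^{-1}x\,dz.$$
The integrability hypothesis, the reflexivity of $X$ and \cite[Theorem 11.55]{AB} ensure that both vectors lie in $X$. Since $\Gamma$ is $\tau$ together with $\gamma$ reversed, $x^++x^-=\frac{1}{2\pi i}\int_\tau (zI-T)^{-1}x\,dz=x$.

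Next we show $x^+\in\Xint$, using the resolvent identity exactly as before. For $w\in\rho(T)\setminus\overline{\inte(\gamma)}$,
$$(wI-T)^{-1}x^+=\frac{1}{2\pi i}\int_\gamma\frac{(zI-T)^{-1}x}{w-z}\,dz.$$
The right-hand side extends analytically to $\exte(\gamma)$, so $\sigma_T(x^+)\subseteq\overline{\inte(\gamma)}$. The symmetric argument on $\Gamma$ gives $x^-\in\Xext$. Only the SVEP is needed here, since we only use the definition of local spectrum and no closedness.

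For uniqueness, note that two decompositions differ by a vector in $\Xint\cap\Xext=X_T(\gamma)$. Hence uniqueness holds for every $x\in M$ whenever $X_T(\gamma)=\{0\}$. Conversely, if $0\neq u\in X_T(\gamma)$, then $x=(x^++u)+(x^--u)$ is a second decomposition of any $x\in M$. This is where $M\neq\{0\}$ is used, although strictly any $x$ works, including $x=0$. In the unique case, the displayed integral formulas then identify the parts.

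For (a)–(c), assume that $\Xint$ and $\Xext$ are closed and that $X_T(\gamma)=\{0\}$, which is implicit for the sum to be direct. Define on $Y=\Xint\dotplus\Xext$ the norm $\|u+v\|_e=\|u\|+\|v\|$, where $u\in\Xint$ and $v\in\Xext$. This is well defined by uniqueness of the decomposition. With this norm, $Y$ is isometric to the product $\Xint\times\Xext$ with the sum norm. That product is Banach because both factors are closed in $X$, which gives (a). Let $J(u+v)=u$. Then $J$ is idempotent, $\|Ju\|_e\le\|u+v\|_e$, and its range and kernel are as required, which gives (b). For (c), if $x\in M$ then $x=x^++x^-$ is the unique decomposition, so $Jx=x^+$ and $(I-J)x=x^-$.

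The main obstacle is the first step. We must justify that the Pettis/Dunford integral commutes with $(wI-T)^{-1}$, and that the resulting function of $w$ is holomorphic off $\overline{\inte(\gamma)}$ even though $(zI-T)^{-1}x$ is defined only a.e. on $\gamma$. We handle this by pairing with $y\in X^*$ and applying Morera's theorem to the scalar functions. Since $\gamma\cap\sigma(T)$ has measure zero, the a.e. definition causes no issue. We then conclude holomorphy of the vector-valued function by weak holomorphy.
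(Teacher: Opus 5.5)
Your proposal is correct and follows the paper's proof essentially step for step. The vectors $x^{\pm}$ are the same integrals over $\gamma$ and $\Gamma$, their membership in $\Xint$ and $\Xext$ comes from the resolvent-identity argument of Theorem~\ref{teorema punto expuesto}, uniqueness is read off from $\Xint\cap\Xext=X_T(\gamma)$, and (a)--(c) use the same sum norm $\|u\|+\|v\|$. The differences are cosmetic: you define $J(u+v)=u$ and bound it directly rather than passing through the topological direct sum, and you spell out the non-uniqueness direction and the weak-holomorphy justification, which the paper leaves implicit.
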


\begin{proof}
Fix $x\in M$ and set
$$
x^+ = \frac{1}{2\pi i}\int_\gamma (zI-T)^{-1}x\,dz,
\qquad
x^- = \frac{1}{2\pi i}\int_\Gamma (zI-T)^{-1}x\,dz.
$$
As in the proof of Theorem \ref{teorema punto expuesto}, $x^+\in\Xint$, $x^-\in\Xext$, and
$$
x^++x^-=\frac{1}{2\pi i}\int_\tau (zI-T)^{-1}x\,dz=x.
$$
Since $X_T(\gamma)=\Xint\cap\Xext$, the decomposition $x=x^++x^-$ is unique if and only if $X_T(\gamma)=\{0\}$.

\smallskip

Now assume that $\Xint$ and $\Xext$ are closed. Since $X_T(\gamma)=\{0\}$, we have
$\Xint\dotplus\Xext$ as an algebraic direct sum. For $x\in \Xint\dotplus\Xext$ write
$x=y+z$ with $y\in\Xint$ and $z\in\Xext$, and define
$$
\|x\|_e=\|y\|_X+\|z\|_X .
$$
It is straightforward to verify that $\|\cdot\|_e$ is a norm on $\Xint\dotplus\Xext$
under which this space becomes a Banach space. Moreover, $\Xint$ and $\Xext$
are closed with respect to $\|\cdot\|_e$. Hence, $\Xint\oplus\Xext$ is a topological
direct sum with respect to $\norm{\cdot}_e$. Consequently, there exists an idempotent operator
$J:\Xint\oplus\Xext\to\Xint\oplus \Xext$, bounded with respect to $\|\cdot\|_e$, such that
$Jx\in\Xint$ and $(I-J)x\in\Xext$ for all $x$.

Finally, if $x\in M$, the uniqueness of the decomposition $x=x^++x^-$ together
with the fact that $J$ is idempotent implies that $Jx=x^+$ and $(I-J)x=x^-$.
This completes the proof.
\end{proof}

The following example illustrates situations in which Theorem \ref{teorema linear manifold} applies.

\begin{example}
    Let $T$ be a linear bounded operator on $X$ with the SVEP and assume that there exists a positively-oriented rectifiable Jordan curve $\gamma$ with $\gamma\cap\sigma(T) = \{z_0\}$. Assume that there exists a holomorphic function $\phi : U\rightarrow \C$, where $U$ is an open set containing $\sigma(T)$, such that the map
    $$z\in \gamma\setminus\{z_0\}\mapsto  \norm{\phi(z)(z I-T)^{-1}}$$
    is bounded. Now, let $x \in X$ and consider $h=\phi(T)x$. It follows that
    $$(z I-T)^{-1}h = (z I-T)^{-1}\phi(T)x = \frac{1}{2\pi i}\int_\gamma \phi(z)(z I-T)^{-1} x\ dz $$ for all $z\in \gamma\setminus\{z_0\}$. Thus
    $$
    \norm{(z I-T)^{-1}h} \leq \frac{\norm{x}}{2\pi} \int_\gamma \norm{\phi(z)(z I-T)^{-1}}\ dz < \infty.
    $$
  Accordingly, setting $M=\ran(\phi(T))$, for every $h\in M$ and $y\in X^*$ the map
$$
z\mapsto \langle (zI-T)^{-1}h,y\rangle, \qquad z\in\gamma,
$$
is integrable.
\end{example}
The ideas behind the previous construction go back to \cite{Stampfli}, where a boundedness condition on the map
$$
z\in \gamma\setminus\{z_0\}\longmapsto \|\phi(z)(zI-T)^{-1}\|
$$
is used to obtain non-trivial closed hyperinvariant subspaces for $T$.
Theorem \ref{teorema linear manifold} shows that related conditions on the growth of the resolvent of $T$ also yield the decomposition properties described above.
Thus, the following result may be regarded as a refinement of the conditions in \cite{Stampfli} and extends results in \cite{Chalendar1997, Chalendar1998}.

\begin{theorem}
Let $X$ be a reflexive Banach space and let $T\in\mathcal L(X)$ have the SVEP. Let $\gamma_1,\gamma_2$ be positively oriented rectifiable Jordan curves such that
\begin{enumerate}
\item[(i)] $\gamma_i\cap\sigma(T)\neq\varnothing$ and has zero arc-length measure for $i=1,2$;
\item[(ii)] $\inte(\gamma_i)\cap\sigma(T)\neq\varnothing$ and $\exte(\gamma_i)\cap\sigma(T)\neq\varnothing$ for $i=1,2$;
\item[(iii)] $\overline{\inte(\gamma_1)}\cap\overline{\inte(\gamma_2)}=\varnothing$.
\end{enumerate}
Assume further that:
\begin{enumerate}
\item[(a)] There exists $x\in X$ such that the map
\begin{equation}\label{int1}
z\mapsto \langle (zI-T)^{-1}x,y\rangle,\qquad z\in\gamma_1,
\end{equation}
is integrable for every $y\in X^*$, and
$$
\int_{\gamma_1}\langle (zI-T)^{-1}x,y_0\rangle\,dz\neq0
$$
for some $y_0\in X^*$;

\item[(b)] There exists $x^*\in X^*$ such that the map
\begin{equation}\label{int2}
z\mapsto \langle (zI-T^*)^{-1}x^*,h\rangle,\qquad z\in\gamma_2,
\end{equation}
is integrable for every $h\in X$, and
$$
\int_{\gamma_2}\langle (zI-T^*)^{-1}x^*,h_0\rangle\,dz\neq0
$$
for some $h_0\in X$.
\end{enumerate}
Then $T$ admits a non-trivial closed hyperinvariant subspace.
\end{theorem}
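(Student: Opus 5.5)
We construct two candidate subspaces, one from the vector $x$ of (a) and one from the functional $x^*$ of (b). We then argue that one of them must be a non-trivial closed hyperinvariant subspace.

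\emph{Using (a).} Set
$$
x^+=\frac{1}{2\pi i}\int_{\gamma_1}(zI-T)^{-1}x\,dz .
$$
This vector lies in $X$ by reflexivity and \cite[Theorem 11.55]{AB}. The argument in the proof of Theorem \ref{teorema punto expuesto} shows that $(wI-T)^{-1}x^+$ extends analytically to $\C\setminus\overline{\inte(\gamma_1)}$. This step uses only the resolvent identity and Cauchy's theorem, not property $(C)$. Hence $x^+\in X_T(\overline{\inte(\gamma_1)})$, and $x^+\neq 0$ because $\langle x^+,y_0\rangle\neq0$.

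\emph{Using (b).} Apply the same argument to $T^*$ on the reflexive space $X^*$. The vector
$$
x^{*+}=\frac{1}{2\pi i}\int_{\gamma_2}(zI-T^*)^{-1}x^*\,dz
$$
is nonzero, since it pairs nontrivially with $h_0$. The map $w\mapsto (wI-T^*)^{-1}x^{*+}$ extends analytically to $\C\setminus\overline{\inte(\gamma_2)}$. We may not need SVEP for $T^*$; we only need this analytic extension as a concrete function.

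Next, let
$$
\mathcal M=\overline{\{Ax^+ : A\in\{T\}'\}} .
$$
This is a closed hyperinvariant subspace and it is nonzero because it contains $x^+$. It remains to show $\mathcal M\neq X$, and for that we show that $x^{*+}$ annihilates $\mathcal M$. Fix $A\in\{T\}'$. Since $A$ commutes with $T$, the vector $Ax^+$ has the analytic extension $w\mapsto A(wI-T)^{-1}x^+$ off $\overline{\inte(\gamma_1)}$. Compute
$$
\langle Ax^+,x^{*+}\rangle=\frac{1}{2\pi i}\int_{\gamma_2}\langle (zI-T)^{-1}Ax^+,x^*\rangle\,dz .
$$
The interchange of the pairing with the integral is justified by the Pettis/Dunford definition. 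For $z\in\gamma_2\setminus\sigma(T)$ we have $(zI-T)^{-1}Ax^+=A\,(x^+)_T(z)$. Because $\gamma_2$ and $\overline{\inte(\gamma_2)}$ lie in $\exte(\gamma_1)$ by (iii), the function $g(z)=\langle A(x^+)_T(z),x^*\rangle$ is holomorphic on a neighbourhood of $\overline{\inte(\gamma_2)}$. Cauchy's theorem then gives that the integral over $\gamma_2$ vanishes. Therefore $x^{*+}\perp\mathcal M$, so $\mathcal M\neq X$.

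The main obstacle is justifying this last step. The integral defining $x^{*+}$ is taken along $\gamma_2$, which meets $\sigma(T)$. The interchange
$$
\Big\langle Ax^+,\int_{\gamma_2}(zI-T^*)^{-1}x^*\,dz\Big\rangle=\int_{\gamma_2}\langle (zI-T)^{-1}Ax^+,x^*\rangle\,dz
$$
holds by the weak definition of the Dunford integral, because the integrand is defined almost everywhere on $\gamma_2$ by (i). The replacement of the integrand by the holomorphic $g$ holds off the null set $\gamma_2\cap\sigma(T)$, and since $g$ is continuous on $\gamma_2$, Cauchy's theorem applies. The point to check carefully is that $(x^+)_T$ is holomorphic on a neighbourhood of $\overline{\inte(\gamma_2)}$. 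This follows from the explicit formula
$$
(x^+)_T(w)=\frac{1}{2\pi i}\int_{\gamma_1}\frac{(zI-T)^{-1}x}{w-z}\,dz ,
$$
whose right-hand side is analytic wherever $w\notin\overline{\inte(\gamma_1)}$. This formula agrees with $(wI-T)^{-1}x^+$ on $\rho(T)\setminus\overline{\inte(\gamma_1)}$, so no SVEP argument is needed for the extension.
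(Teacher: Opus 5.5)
Your proof is correct. It shares the paper's starting point: the Cauchy integrals $x^+$ along $\gamma_1$ and $x^{*+}$ along $\gamma_2$, both nonzero because of $y_0$ and $h_0$. It finishes differently, though.

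The paper works with the hyperinvariant linear manifold $X_T(\overline{\inte(\gamma_1)})$, which contains $x^+\neq 0$. It places $x^{*+}$ in $X^*_{T^*}(\overline{\inte(\gamma_2)})$ and then cites the orthogonality $X_T(\overline{\inte(\gamma_1)})\subseteq X^*_{T^*}(\overline{\inte(\gamma_2)})^{\perp}$ from \cite[Proposition 2.5.1]{LN00}. The closure of that spectral manifold is the required subspace.

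You instead use the smaller subspace $\mathcal M=\overline{\{Ax^+ : A\in\{T\}'\}}$, which lies inside the paper's, and prove $x^{*+}\perp\mathcal M$ directly. Pairing $Ax^+$ with the weak integral that defines $x^{*+}$ turns $\langle Ax^+,x^{*+}\rangle$ into the integral over $\gamma_2$ of $\langle AF(z),x^*\rangle$. Here $F(w)=\frac{1}{2\pi i}\int_{\gamma_1}(w-z)^{-1}(zI-T)^{-1}x\,dz$ is holomorphic on $\exte(\gamma_1)\supseteq\overline{\inte(\gamma_2)}$, and Cauchy's theorem gives zero.

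What your route buys: the argument is self-contained, replacing the cited orthogonality result by one explicit computation. It also shows that only the integral representation of $x^{*+}$ matters. The localization of $x^{*+}$ in $X^*_{T^*}(\overline{\inte(\gamma_2)})$, which you state, is never used, and the SVEP plays no role. The paper's route is shorter given the citation, and it keeps the invariant subspace inside the spectral-subspace framework of the rest of the paper.

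One wording fix: Cauchy's theorem applies because $g$ is holomorphic on a neighbourhood of $\overline{\inte(\gamma_2)}$, which you do establish, not merely because $g$ is continuous on $\gamma_2$.
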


\begin{proof}
Arguing as in the proof of Theorem \ref{teorema punto expuesto}, it follows that the vectors
$$x_1 = \frac{1}{2\pi i}\int_{\gamma_1} (z I-T)^{-1}x dz, \qquad x^*_2 = \frac{1}{2\pi i} \int_{\gamma_2} (z I-T^*)^{-1}x^*dz $$
are well defined and belong to $X_T(\overline{\inte(\gamma_1)})$ and $X^*_{T^*}(\overline{\inte(\gamma_2)}),$ respectively. Moreover, the existence of $y_0$ and $h_0$ yields that such vectors are non-zero, so the previous spectral subspaces are both non-zero. Now, recall that $X_T(\overline{\inte(\gamma_1)})$ is hyperinvariant under $T$, so it suffices to show that it is non-dense to deduce that its closure is a non-trivial closed hyperinvariant subspace for $T$. By \cite[Proposition 2.5.1]{LN00}
    $$X_T(\overline{\inte(\gamma_1)}) \subset X^*_{T^*}(\overline{\inte(\gamma_2)})^\perp,
    $$
    and since $X_{T^*}(\overline{\inte(\gamma_2)})$ is non-zero, the statement follows.
\end{proof}

\begin{remark} If $X$ is not reflexive, the integrability conditions required in \eqref{int1} and \eqref{int2} have to be replaced by the requirements
$$
\int_{\gamma_1} \|(z I-T)^{-1}x\|\,|dz|<\infty \text{ and } \int_{\gamma_2} \|(z I-T^*)^{-1}x^*\|\,|dz|<\infty,
$$
respectively.
\end{remark}

\section{Plain spectral cuts for compact perturbations of normal operators}\label{sec 5}

Let $H$ be a complex infinite-dimensional separable Hilbert space. In this section we prove that a large class of trace-class perturbations of diagonalizable normal operators on $H$ admits an exhaustive cutting family. As a consequence, these operators admit an unconventional functional calculus and are super-decomposable, which extends previous results by the authors in \cite{GG3}.

In order to illustrate some of the core ideas, we first consider the case of cyclic normal operators.

\medskip

\subsection{A first example: plain spectral cuts for normal operators}
Let $\mu$ be a compactly supported positive finite Borel measure and consider the multiplication operator $T=M_z$ acting on $L^2(\mu)$.
It is well known that every cyclic normal operator on $H$ is unitarily equivalent to an operator of this form.

\smallskip

Let $\gamma$ be a rectifiable Jordan curve such that both $\inte(\gamma)$ and $\exte(\gamma)$ intersect $\sigma(M_z) = \supp(\mu)$ and $\mu(\gamma) = 0$. Then, the multiplication operator $M_{\chi_{\overline{\inte(\gamma)}}}$ is an idempotent in $\biconm{M_z}$ such that
\begin{enumerate}
    \item [(i)] $\ran(M_{\chi_{\overline{\inte(\gamma)}}}) = \chi_{\overline{\inte(\gamma)}}L^2(\mu) = X_T(\overline{\inte(\gamma)}).$
    \item [(ii)] $I-M_{\chi_{\overline{\inte(\gamma)}}} = M_{\chi_{\overline{\exte(\gamma)}}}$ and $\ker(M_{\chi_{\overline{\inte(\gamma)}}}) = X_T(\overline{\exte(\gamma)}).$
\end{enumerate}
Thus, $M_z$ admits a plain spectral cut along $\gamma$, with associated idempotent $P_\gamma = M_{\chi_{\overline{\inte(\gamma)}}}.$

\smallskip

We show that the unconventional functional calculus associated with $\gamma$ admits an explicit integral representation on a dense subspace $\EL\subset L^2(\mu)$.
Let $\EL$ be the linear manifold of finite linear combinations of indicator functions $\chi_B$, where $B\subset\supp(\mu)$ is a Borel set with positive distance from $\gamma$.
As in Example \ref{ejemplo normal}, $\EL$ is dense in $L^2(\mu)$.

\smallskip

Take $f= \sum_{k=1}^n a_k \chi_{B_k}\in \EL$, and observe that for all $\zeta\in \gamma$, the function
$$(\zeta I-M_z)^{-1} f = \sum_{k=1}^n a_k \frac{\chi_{B_k}}{\zeta-z}$$
is well defined and lies in $L^2(\mu)$. Moreover, it follows that the map
$$\zeta \in \gamma \mapsto \pe{(\zeta I-M_z)^{-1} f,g}$$
is integrable for every  $f\in\EL$ and $g\in L^2(\mu)$. In addition,
$$
\frac{1}{2\pi i}\int_\gamma (\zeta I-M_z)^{-1} f(z) d\zeta = \chi_{\overline{\inte(\gamma)}}(z)f(z)
$$
for almost every $z\in \supp(\mu)$, so, there exists a constant $C>0$ such that
$$
\norm{\int_\gamma (\zeta I-M_z)^{-1}f d\zeta}_{L^2(\mu)} \leq C \norm{f}_{L^2(\mu)}
$$
for all $f\in\EL$. Thus, the map
$$f\in \EL \longmapsto \frac{1}{2\pi i}\int_\gamma (\zeta I-M_z)^{-1}f d\zeta$$
extends to a linear bounded operator acting on $L^2(\mu)$ that coincides with $P_\gamma$.

\medskip

Finally, let $G\supset\overline{\inte(\gamma)}$ be open and let $F:G\to\C$ be holomorphic.
For $f=\sum_{k=1}^n a_k\chi_{B_k}\in\EL$ and almost every $z\in\supp(\mu)$,
$$
\frac{1}{2\pi i}\int_\gamma F(\zeta)(\zeta I-M_z)^{-1}f(z)\,d\zeta
= \chi_{\overline{\inte(\gamma)}}(z)F(z)f(z)
= F(M_z|_{\ran(P_\gamma)})P_\gamma f(z)
= F_\gamma(M_z)f(z).
$$
Thus the unconventional functional calculus admits the claimed integral representation.

\medskip

\begin{remark}
The unconventional functional calculus for $M_z$ may be viewed as intermediate between the Dunford and the Borel functional calculi: it is richer than the former, but admits fewer functions than the latter.
\end{remark}

\subsection{Trace class perturbations of diagonalizable normal operators}
In this subsection, we deal with  operators that are unitarily equivalent to
$$T=D_\Lambda + K\in \EL(H),$$
where $D_\Lambda \in \EL(H)$ is a diagonal operator with respect to an orthonormal basis $(e_n)_{n\in \N}\subset H$ of eigenvectors, $\Lambda = (\lambda_n)_{n\in \N}$ is the set of associated eigenvalues, and
$$ K =\sumk u_k\otimes v_k,$$ where $u_k = \sumn \al_n^{(k)}e_n$ and $v_k = \sumn \beta_n^{(k)}$ are non-zero vectors in $H$ satisfying
$$\sumk \norm{u_k}\norm{v_k} < \infty.$$
We consider the subclass of these operators consisting of $T=D_\Lambda + \sumk u_k\otimes v_k$ such that
\begin{equation}\label{summability condition}
			 \sum_{(n,k) \in \mathcal{N}_u} |\al_n^{(k)}|^2\log \left(1+\frac{1}{|\al_n^{(k)}|}\right) + \sum_{(n,k) \in \mathcal{N}_v}|\beta_n^{(k)}|^2\log\left(1+ \frac{1}{|\beta_n^{(k)}|}\right) < \infty,
			 \end{equation}
			 where $\mathcal{N}_u := \{(n,k) \in \N\times \N : \al_n^{(k)} \neq 0 \}$ and $\mathcal{N}_v := \{(n,k) \in \N\times \N : \beta_n^{(k)} \neq 0 \}$. Moreover, we will also assume that $\sigma_p(T)\cup \sigma_p(T^*)$ is at most countable and $\Lambda'$ is not reduced to a singleton. Observe that  condition \eqref{summability condition} implies that
\begin{equation}\label{consecuencia sumabilidad 1}
\sumn \sumk \left(|\al_n^{(k)}|^2 + |\beta_n^{(k)}|^2 \right) < \infty
\end{equation}
and
	\begin{equation}\label{consecuencia sumabilidad 2}
		\sum_{(n,k) \in \mathcal{N}_u} |\al_n^{(k)}|^2\log \left(\frac{1}{|\al_n^{(k)}|}\right) + \sum_{(n,k) \in \mathcal{N}_v}|\beta_n^{(k)}|^2\log\left( \frac{1}{|\beta_n^{(k)}|}\right) < \infty.
	\end{equation}
In particular, \eqref{consecuencia sumabilidad 1} yields that $\sumk (\norm{u_k}^2 + \norm{v_k}^2) < \infty,$ so $K$ is  trace-class.

\medskip

In what follows, our main objective is to construct an exhaustive cutting family for the operators described above. To this end, we introduce the \emph{decomposability set} of $T$, which is a slight modification of the notion presented in \cite{GG3}.

	\begin{definition}\label{definition decomposability set}
		Let $T=D_\Lambda + \sumk u_k\otimes v_k \in \EL(H)$. Assume that $\sigma_p(T)\cup \sigma_p(T^*)$ is at most countable, $\Lambda'$ is not reduced to a singleton and that $\Lambda$ does not lie in a vertical or a horizontal line. Then, the \textit{decomposablity set} $\Delta(T)$ of $T$ consists of all real numbers $\x \in \R \setminus ( \PR(\sigma_p(T)\cup\sigma_p(T^*))\cup \Impart(\sigma_p(T)\cup\sigma_p(T^*)))$ such that
		
		$$ \sumn \sumk \frac{|\al_n^{(k)}|^2}{|\PR(\lambda_n)-\x|} +\frac{|\beta_n^{(k)}|^2}{|\PR(\lambda_n)-\x|} + \frac{|\al_n^{(k)}|^2}{|\Impart(\lambda_n)-\x|} +\frac{|\beta_n^{(k)}|^2}{|\Impart(\lambda_n)-\x|} < \infty. $$
	\end{definition}

Let us clarify several aspects of this definition. First, observe that $\Delta(T)$ contains no eigenvalues of $T$ or $T^*$. Moreover, if $T$ satisfies \eqref{summability condition}, then \cite[Lemma 2.3]{GG3} (whose proof works identically for the case of the imaginary part) ensures that $\Delta(T)$ contains almost every point of $\mathbb{R}$. In particular, $\Delta(T)\cap \PR(\Lambda')$ and $\Delta(T)\cap \Impart(\Lambda')$ contain almost every point of $\PR(\Lambda')$ and $\Impart(\Lambda')$, respectively.

\smallskip

We also assume that $\Lambda$ is not contained in any vertical or horizontal line in order to guarantee that $\Delta(T)$ is well defined. This is a harmless assumption, since it can always be achieved by multiplying $T$ by an appropriate $e^{i\theta}\in \mathbb{T}$.	
	
\medskip

Now, suppose $T=D_\Lambda + \sumk u_k\otimes v_k \in \EL(H)$ satisfies that $\sigma_p(T)\cup \sigma_p(T^*)$ is at most countable, $\Lambda'$ is not reduced to a singleton and that $\Lambda$ does not lie in a vertical or a horizontal line and set
$$
a= \min \PR(\sigma(T))-1, \quad b= \max \PR(\sigma(T))+1, \quad
c= \min \Impart(\sigma(T))-1, \quad d= \max \Impart(\sigma(T))+1.
$$
Clearly,  $\sigma(T)$ is contained in the rectangle $(a,b)\times (c,d)$. We will say that $G:=G_1\times G_2$ is an \textit{appropriate grid}  for $T$ if there exists a finite collection of points
$$
a= x_1 < x_2 < \cdots < x_n = b, \qquad
c= y_1 < y_2 < \cdots < y_m = d
$$
such that $G_1=\{x_i\}_{i=1}^n$, $G_2=\{y_j\}_{j=1}^m$ and $(x_i,y_j) \in \Delta(T)\times\Delta(T)$ for all
$i=1,\ldots,n$ and $j=1,\ldots,m$.

\begin{figure}[h!]
    \centering
 \hspace*{1cm}   \begin{tikzpicture}[font=\large]

    \def\valA{1} 
    \def\valB{6} 
    \def\valC{1} 
    \def\valD{5} 
    \def\stepGrid{0.5} 

    \draw[->, thick] (-0.5,0) -- (\valB+1,0) node[right] {$\mathrm{Re}(z)$};
    \draw[->, thick] (0,-0.5) -- (0,\valD+1) node[above] {$\mathrm{Im}(z)$};

    \draw[dotted] (\valA,\valC) -- (\valA,0) node[below] {$a$};
    \draw[dotted] (\valB,\valC) -- (\valB,0) node[below] {$b$};
    \draw[dotted] (\valA,\valC) -- (0,\valC) node[left] {$c$};
    \draw[dotted] (\valA,\valD) -- (0,\valD) node[left] {$d$};

    \begin{scope}
        \clip (\valA,\valC) rectangle (\valB,\valD);

        \draw[step=\stepGrid, black!30, thin] (\valA,\valC) grid (\valB,\valD);

        \pgfmathsetmacro{\secondX}{\valA+\stepGrid}
        \pgfmathsetmacro{\secondY}{\valC+\stepGrid}

        \foreach \x in {\valA, \secondX, ..., \valB} {
            \foreach \y in {\valC, \secondY, ..., \valD} {
                \fill[black] (\x,\y) circle (1.5pt);
            }
        }
    \end{scope}

    \draw[thick, black] (\valA,\valC) rectangle (\valB,\valD);
    \node[black, anchor=south east] at (\valB, \valD) {$[a,b] \times [c,d]$};

    \draw[fill=black!20, draw=black, thick, fill opacity=0.8, even odd rule]
        plot [smooth cycle, tension=0.7] coordinates {
            (2, 2) (2.5, 4) (4.5, 3.5) (5, 2.5) (3.5, 1.5)
        }
        (3, 3) circle (0.3cm)
        (4.2, 2.3) ellipse (0.4cm and 0.2cm);

    \node
        at (4.1, 3.3) {$\sigma(T)$};

\end{tikzpicture}
\hspace{-1cm}\caption{An example of an admissible grid for $T$} \label{grid}
\end{figure}
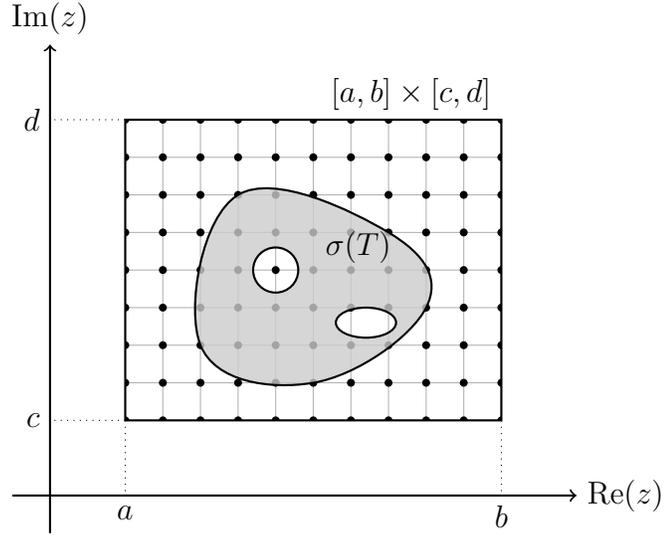

\begin{definition}
Let $T$ be an operator as in Definition \ref{definition decomposability set}. A Jordan curve $\gamma$ is said to be \textit{appropriate} for $T$ if:
\begin{enumerate}
    \item[(i)] $\gamma$ is a polygonal curve whose segments are parallel to either the real or the imaginary axis.
    \item[(ii)] There exists an admissible grid $G$ for $T$ such that the endpoints of the segments forming $\gamma$ lie on $G$.
    \item[(iii)] Both $\inte(\gamma)$ and $\exte(\gamma)$ intersect $\sigma(T)$.
\end{enumerate}
A cycle $\Gamma$ is \textit{appropriate} for $T$ if it is an admissible cycle for $T$ formed by a finite union of curves, each of which is either an appropriate curve for $T$ or $\partial([a,b]\times[c,d])$. We denote by $\mathcal{F}_T$ the family of all appropriate cycles for $T$.\end{definition}
\begin{figure}[h!]
    \centering
 \hspace*{1cm}   \begin{tikzpicture}[font=\large]

    \def\valA{1} 
    \def\valB{6} 
    \def\valC{1} 
    \def\valD{5} 
    \def\stepGrid{0.5} 

    \draw[->, thick] (-0.5,0) -- (\valB+1,0) node[right] {$\mathrm{Re}(z)$};
    \draw[->, thick] (0,-0.5) -- (0,\valD+1) node[above] {$\mathrm{Im}(z)$};

    \draw[dotted] (\valA,\valC) -- (\valA,0) node[below] {$a$};
    \draw[dotted] (\valB,\valC) -- (\valB,0) node[below] {$b$};
    \draw[dotted] (\valA,\valC) -- (0,\valC) node[left] {$c$};
    \draw[dotted] (\valA,\valD) -- (0,\valD) node[left] {$d$};

    \begin{scope}
        \clip (\valA,\valC) rectangle (\valB,\valD);

        \draw[step=\stepGrid, black!30, thin] (\valA,\valC) grid (\valB,\valD);

    \end{scope}

    \draw[thick, black] (\valA,\valC) rectangle (\valB,\valD);
    \node[black, anchor=south east] at (\valB, \valD) {$[a,b] \times [c,d]$};

    \draw[fill=black!20, draw=black, thick, fill opacity=0.8, even odd rule]
        plot [smooth cycle, tension=0.7] coordinates {
            (2, 2) (2.5, 4) (4.5, 3.5) (5, 2.5) (3.5, 1.5)
        }
        (3, 3) circle (0.3cm)
        (4.2, 2.3) ellipse (0.4cm and 0.2cm);

        \draw[black, ultra thick, line join=round]
            (2.0, 1.5) -- 
            (3.5, 1.5) -- 
            (3.5, 2.0) -- 
            (5.0, 2.0) -- 
            (5.0, 3.0) -- 
            (4.5, 3.0) -- 
            (4.5, 4.0) -- 
            (5.5, 4.0) -- 
            (5.5, 4.5) -- 
            (3.5, 4.5) -- 
            (3.5, 4.0) -- 
            (2.5, 4.0) -- 
            (2.5, 4.5) -- 
            (1.5, 4.5) -- 
            (1.5, 2.5) -- 
            (2.0, 2.5) -- 
            cycle;        

        \draw[<-, black, thick] (5.5, 4.25) -- ++(0.7,0) node[right, font=\bfseries] {$\gamma$};

    \node
        at (3.9, 3.4) {$\sigma(T)$};

\end{tikzpicture}
\hspace{-1cm}\caption{An appropriate curve for $T$} \label{fig:grid_gamma_snake}
\end{figure}
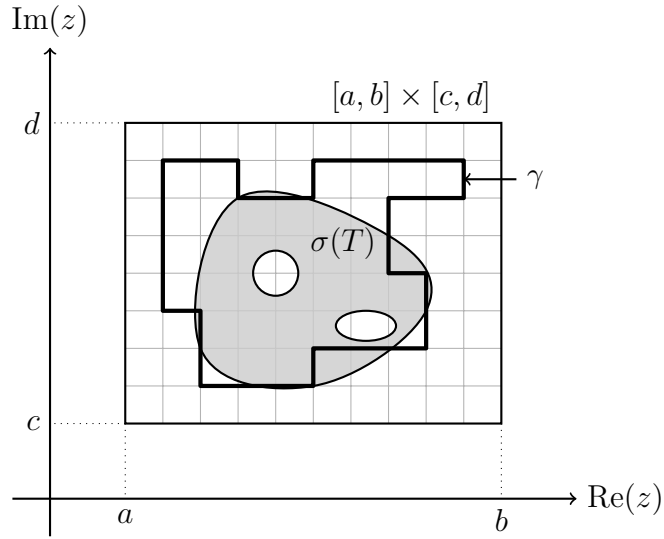

Assume $T$ is an operator as in Definition \ref{definition decomposability set}. Our goal is to exhibit a concrete subfamily $\F'_T$ of $\mathcal{F}_T$ that is an exhaustive cutting family for $T$. To establish this, it will suffice to show that $T$ admits  plain spectral cuts along every appropriate cycle in $\F'_T$. By Proposition \ref{proposicion union curvas}, this reduces to showing that $T$ admits a plain spectral cut along each of the appropriate curves that generate these cycles.

\medskip

Following the lines of \cite[Section 3]{GG3}, for each appropriate curve $\gamma$ we define
$$
X_\gamma(z) = \sumk (D_\Lambda-zI)^{-1/2}u_k\otimes e_k, \qquad z \in \gamma,
$$
$$
Y_\gamma(z) = \sumk e_k \otimes (D_\Lambda^*-\overline{z}I)^{-1/2}v_k, \qquad z \in \gamma.
$$
These are well-defined bounded operators on $H$. The proof of this fact is essentially the same as that of \cite[Proposition 3.3]{GG3}, taking into account that the curve considered there consists of a vertical segment attached to an arc of the unit circle $\T$. In our case, $\gamma$ is a finite union of horizontal and vertical segments, so the argument proceeds in the same way.	
	
\medskip
	
Again following \cite[Section 3]{GG3}, one can show that the operators $(I+Y_\gamma(z)X_\gamma(z))$ are invertible for each $z \in \gamma$, which leads to the representation
$$
(I+Y_\gamma(z)X_\gamma(z))^{-1} = \sumi \sumj a_{\gamma,i,j}(z)\, e_i\otimes e_j,
$$
where
$$
a_{\gamma,i,j}(z) = \pe{(I+Y_\gamma(z)X_\gamma(z))^{-1}e_j,e_i}.
$$

Moreover, defining
$$
f_T^{(i,j)}(z) = \sumn \frac{\alpha_n^{(i)}\overline{\beta_n^{(j)}}}{\lambda_n-z},
\qquad (i,j)\in \mathbb{N}\times\mathbb{N},
$$
the following result holds:	

\begin{theorem}
Let $T$ be an operator as in Definition \ref{definition decomposability set} and let $\gamma$ be an appropriate curve for $T$. Then there exists a constant $C_\gamma > 0$ such that, for every $x=\sumn x_n e_n \in H$,
$$
\sumi \left| \sumj x_j a_{\gamma,i,j}(z) \right|^2 \le C_\gamma \|x\|^2 .
$$
Moreover,
$$
\sumk \sumj x_j a_{\gamma,k,j}(z)\bigl(\delta_{k,n}+f_T^{(k,n)}(z)\bigr) = x_n,
$$
for each $n\in \mathbb{N}$.	
\end{theorem}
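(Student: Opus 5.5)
The first inequality is a uniform bound on the operator norm of $(I+Y_\gamma(z)X_\gamma(z))^{-1}$ along $\gamma$, and the second identity is the matrix form of $(I+Y_\gamma X_\gamma)^{-1}(I+Y_\gamma X_\gamma)=I$. I will treat them in that order.

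\emph{The first inequality.} For fixed $z\in\gamma$ and $x=\sumn x_n e_n$, the vector $(I+Y_\gamma(z)X_\gamma(z))^{-1}x$ has $i$-th coordinate $\sumj x_j a_{\gamma,i,j}(z)$. Hence the left-hand side equals $\norm{(I+Y_\gamma(z)X_\gamma(z))^{-1}x}^2$, and the inequality is equivalent to
$$\sup_{z\in\gamma}\norm{(I+Y_\gamma(z)X_\gamma(z))^{-1}}<\infty .$$
Pointwise invertibility is already available, following \cite[Section 3]{GG3}. Since $\gamma$ is compact and inversion is norm-continuous on the group of invertible operators, it suffices to prove that $z\mapsto Y_\gamma(z)X_\gamma(z)$ is norm-continuous on $\gamma$.

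\emph{Continuity of $Y_\gamma X_\gamma$.} This is the step I expect to be the main obstacle. I would work with the matrix of $Y_\gamma(z)X_\gamma(z)$ in the basis $(e_k)$ and compute, up to the convention for $u\otimes v$,
$$\pe{Y_\gamma(z)X_\gamma(z)e_j,e_k}=\sumn\frac{\alpha_n^{(j)}\overline{\beta_n^{(k)}}}{\lambda_n-z}=f_T^{(j,k)}(z).$$
I would bound the Hilbert--Schmidt norm of the difference at $z,w\in\gamma$ by
$$\Big(\sum_{j,k}\big|f_T^{(j,k)}(z)-f_T^{(j,k)}(w)\big|^2\Big)^{1/2}.$$
Applying Cauchy--Schwarz in $n$, it is enough to show that the maps
$$z\mapsto\sum_{n,k}\frac{|\al_n^{(k)}|^2}{|\lambda_n-z|}\quad\text{and}\quad z\mapsto\sum_{n,k}\frac{|\beta_n^{(k)}|^2}{|\lambda_n-z|}$$
are continuous on $\gamma$, together with the analogous continuity for the difference quotients.

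Away from $\overline{\Lambda}$ this is clear. On a vertical segment $\PR z=x_i$, one has $|\lambda_n-z|\ge|\PR\lambda_n-x_i|$; on a horizontal segment one uses $|\Impart\lambda_n-y_j|$ instead. Since the grid points lie in $\Delta(T)$, the corresponding dominating series converge, and dominated convergence yields continuity. This domination is exactly why appropriate curves are built from segments on lines $\PR z\in\Delta(T)$ and $\Impart z\in\Delta(T)$. The same estimate is what was used in \cite[Proposition 3.3]{GG3} to show that $X_\gamma$ and $Y_\gamma$ are bounded. At corner points both bounds apply, so no extra care is needed there. Taking $C_\gamma=\sup_{z\in\gamma}\norm{(I+Y_\gamma(z)X_\gamma(z))^{-1}}^2$ then gives the first claim.

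\emph{The identity.} With the matrix computed above, $I+Y_\gamma(z)X_\gamma(z)$ has entries $\delta_{k,n}+f_T^{(k,n)}(z)$, with the indices ordered so as to match the statement. I would expand
$$x=(I+Y_\gamma X_\gamma)^{-1}(I+Y_\gamma X_\gamma)x$$
coordinatewise, or equivalently pair $(I+Y_\gamma X_\gamma)^{-1}x$ against the rows of $I+Y_\gamma X_\gamma$, and read off the $n$-th coordinate. The interchange of the sums over $k$ and $j$ is justified because both $(\sumj x_ja_{\gamma,k,j}(z))_k$ and $(\delta_{k,n}+f_T^{(k,n)}(z))_k$ lie in $\ell^2$: the first by the inequality just proved, the second because $Y_\gamma X_\gamma$ is Hilbert--Schmidt. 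Matching the exact transpose or adjoint convention in $a_{\gamma,i,j}$ against that of $f_T^{(k,n)}$ is routine bookkeeping.
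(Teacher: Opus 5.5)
Your proposal is correct and is essentially the argument the paper defers to \cite[Theorem 3.6]{GG3}: pointwise invertibility of $I+Y_\gamma(z)X_\gamma(z)$, together with norm-continuity of $z\mapsto Y_\gamma(z)X_\gamma(z)$ along each axis-parallel segment (by dominated convergence against the $\Delta(T)$-series, including at the corners), gives a uniform bound on the compact curve $\gamma$, and the identity is the $n$-th coordinate of $(I+Y_\gamma X_\gamma)(I+Y_\gamma X_\gamma)^{-1}x=x$. One small slip: that is the order you need (your ``pair against the rows'' description), not the coordinatewise expansion of $(I+Y_\gamma X_\gamma)^{-1}(I+Y_\gamma X_\gamma)x$, which gives a different identity, with the uninverted matrix applied to $x$ first.
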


The proof runs parallel to that of \cite[Theorem 3.6]{GG3}, and we omit it.

\medskip

We are now in a position to introduce the operators $P_\gamma$, which will be shown below to be the idempotents associated with the spectral cuts along $\gamma$. For this purpose, given $\Lambda=(\lambda_n)_{n\in\mathbb{N}}$ and $A\subset\mathbb{C}$, denote
$$
N_A=\{n\in\mathbb{N}:\lambda_n\in A\}.
$$

\begin{theorem}\label{teorema definición f(T)}
Let $T$ be an operator as in Definition \ref{definition decomposability set}, and let $\gamma$ be an appropriate curve for $T$. If  $F=\overline{\inte(\gamma)}$ and $x=\sumn x_n e_n \in H$, the operator $P_\gamma$ given by
\begin{equation}
P_\gamma x = \sum_{n\in N_F} x_n e_n
+\sumn \sum_{k=1}^{\infty}
\left(
\frac{1}{2\pi i}\int_{\gamma}
\frac{\displaystyle \sumj \frac{x_j}{\lambda_j-\xi}
\left(\summ \overline{\beta_j^{(m)}}\, a_{\gamma,k,m}(\xi)\right)}
{\lambda_n-\xi}\, d\xi
\right)
\alpha_n^{(k)} e_n,\label{operador P}
\end{equation}
is well defined and bounded on $H$.	Moreover,
$$\ran(P_\gamma)\subseteq H_T(\overline{\inte(\gamma)}), \qquad \ker(P_\gamma)\subseteq \Hext.$$
\end{theorem}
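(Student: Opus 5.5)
Our strategy is to read \eqref{operador P} as the Riesz-type integral $\frac{1}{2\pi i}\int_\gamma (\xi I-T)^{-1}x\,d\xi$ written in coordinates, and to justify it through the factorization used in \cite{GG3}. For $\xi\in\gamma\setminus\Lambda'$ with $\xi\notin\sigma_p(T)$, the Sherman--Morrison--Woodbury identity gives
$$
(\xi I-T)^{-1}=(\xi I-D_\Lambda)^{-1}+(\xi I-D_\Lambda)^{-1}\Big(\sum_{k}u_k\otimes e_k\Big)(I+Y_\gamma X_\gamma)^{-1}\Big(\sum_m e_m\otimes v_m\Big)(\xi I-D_\Lambda)^{-1},
$$
with signs and half-powers arranged as in \cite[Section 3]{GG3}. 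The $(n)$-th coordinate of the second term is exactly the integrand in \eqref{operador P}. The first term integrates to $\sum_{n\in N_F}x_ne_n$: for $\lambda_n\notin\gamma$ this follows from Cauchy's formula, and $\lambda_n\in\gamma$ can only occur on a null set, which is harmless because the grid lines lie in $\Delta(T)$. The plan is therefore: (1) prove boundedness of the correction term; (2) verify that $P_\gamma x$ is the resolvent integral, at least on a dense set; (3) deduce the inclusions for the range and kernel as in the proof of Theorem \ref{teorema punto expuesto}.

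For (1), fix $x$. Put $c_m(\xi)=\sum_j x_j\overline{\beta_j^{(m)}}/(\lambda_j-\xi)$ and $g_k(\xi)=\sum_m c_m(\xi)a_{\gamma,k,m}(\xi)$. By the preceding theorem, $\sum_k|g_k(\xi)|^2\le C_\gamma\sum_m|c_m(\xi)|^2$. The $n$-th coordinate of the correction is then $\frac{1}{2\pi i}\int_\gamma \sum_k\alpha_n^{(k)}g_k(\xi)(\lambda_n-\xi)^{-1}d\xi$. On each horizontal or vertical segment of $\gamma$, say the segment $\mathrm{Re}\,\xi=\mathbf{x}$ with $\mathbf{x}\in\Delta(T)$, we apply Cauchy--Schwarz in $k$ and in $\xi$. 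Then
$$
\sum_n\Big|\int \frac{\sum_k\alpha_n^{(k)}g_k}{\lambda_n-\xi}\Big|^2\le \sum_{n,k}\int\frac{|\alpha_n^{(k)}|^2}{|\lambda_n-\xi|^{2}}|d\xi|\cdot\Big(\ldots\Big),
$$
and the line integral $\int |\lambda_n-\xi|^{-2}|d\xi|\lesssim |\mathrm{Re}\lambda_n-\mathbf{x}|^{-1}$ is summable against $|\alpha_n^{(k)}|^2$ precisely by the definition of $\Delta(T)$. Similarly, $\int\sum_m|c_m|^2|d\xi|\lesssim \|x\|^2\sum_{j,m}|\beta_j^{(m)}|^2/|\mathrm{Re}\lambda_j-\mathbf{x}|$. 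This Hilbert-transform-type estimate, which bounds the segment integral of $|\sum_j x_j\overline{\beta_j^{(m)}}/(\lambda_j-\xi)|^2$, is the delicate point. We would handle it by treating $x\mapsto \big(\overline{\beta_j^{(m)}}(\lambda_j-\xi)^{-1/2}x_j\big)$ as in \cite[Proposition 3.3]{GG3}, splitting the factor $(\lambda_j-\xi)^{-1}$ into two half-powers. One half-power is absorbed into the $\Delta(T)$ sums, and the other is handled through the boundedness of $X_\gamma,Y_\gamma$. We expect this step to be the main obstacle; the paper's assertion that the argument "proceeds in the same way" for polygonal curves suggests that it reduces segment by segment to the estimate in \cite{GG3}.

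For (2) and (3), we first take $x$ in the dense manifold of finitely supported vectors in $\{e_n\}$ with $\lambda_n\notin\gamma$. For such $x$ the integrand $\xi\mapsto(\xi I-T)^{-1}x$ is weakly integrable on $\gamma$ by (1), and $P_\gamma x=\frac{1}{2\pi i}\int_\gamma(\xi I-T)^{-1}x\,d\xi$. The resolvent-identity computation in the proof of Theorem \ref{teorema punto expuesto} then shows that $(wI-T)^{-1}P_\gamma x$ extends analytically to $\C\setminus\overline{\inte(\gamma)}$, so $P_\gamma x\in H_T(\overline{\inte(\gamma)})$. In the same way, $x-P_\gamma x=\frac{1}{2\pi i}\int_\Gamma(\xi I-T)^{-1}x\,d\xi\in\Hext$. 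The operator $T$ here is decomposable-type with countable point spectrum, and the spectral subspaces should be closed by \cite{GG3}. Granting this, boundedness of $P_\gamma$ together with density extends both inclusions to all of $H$, which gives $\ran(P_\gamma)\subseteq\Hint$ and $\ker(P_\gamma)\subseteq\Hext$.
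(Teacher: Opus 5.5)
Your step (1) follows the paper's route: a segment-by-segment reduction to the estimates of \cite{GG3}. It is also easier than you expect. By Cauchy--Schwarz in $j$, $|c_m(\xi)|^2\le\|x\|^2\sum_j|\beta_j^{(m)}|^2|\lambda_j-\xi|^{-2}$. Integrating along a segment of the line $\mathrm{Re}\,\xi=\mathbf{x}$ then gives at most $\pi\|x\|^2\sum_{j,m}|\beta_j^{(m)}|^2/|\mathrm{Re}\lambda_j-\mathbf{x}|$, so no half-power splitting is needed.

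The genuine gap is in steps (2)--(3). Writing $P_\gamma x=\frac{1}{2\pi i}\int_\gamma(\xi I-T)^{-1}x\,d\xi$ and rerunning the resolvent-identity argument of Theorem \ref{teorema punto expuesto} requires $(\xi I-T)^{-1}$ to exist for almost every $\xi\in\gamma$. That means $\gamma\cap\sigma(T)$ must have zero arc length, which is exactly hypothesis (i) of that theorem, and it is not available here.

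Your Woodbury identity holds only on $\gamma\setminus\Lambda'$. Your remark about $\lambda_n\in\gamma$ concerns $\Lambda$, not $\Lambda'$; in fact no $\lambda_n$ lies on a grid line at all. But $\Lambda'\subseteq\sigma(T)$ may have interior, for example when $\Lambda$ is dense in a disc, which is the model situation of \cite{GG3}. Since $\Delta(T)$ has full measure, the grid lines run straight through $\Lambda'$, so $\gamma\cap\sigma(T)$ typically has positive length. This is precisely why the construction uses $(I+Y_\gamma(\xi)X_\gamma(\xi))^{-1}$, which exists at every $\xi\in\gamma$, including points of $\sigma(T)$.

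At such points the integrand of \eqref{operador P} is not a resolvent applied to $x$. It is only a coordinatewise expression: the sequence $(\sum_k g_k(\xi)\alpha_n^{(k)}/(\lambda_n-\xi))_n$ need not be square summable when $\xi\in\Lambda'$, and it becomes a vector of $H$ only after integrating in $\xi$. So you cannot apply $(wI-T)^{-1}$ under the integral sign and use $(wI-T)^{-1}(\xi I-T)^{-1}=((\xi I-T)^{-1}-(wI-T)^{-1})/(w-\xi)$. Restricting to finitely supported $x$ does not help, because the singular factor $(\lambda_n-\xi)^{-1}$ acts on $\sum_k g_k(\xi)u_k$, not on $x$. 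The same objection applies to $x-P_\gamma x=\frac{1}{2\pi i}\int_\Gamma(\xi I-T)^{-1}x\,d\xi$.

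What is missing is a criterion for membership in $H_T(F)$ that never evaluates the resolvent on $\gamma$. The paper uses the coordinatewise characterization of spectral subspaces in \cite[Theorem 2.2]{GG3}. For $y=P_\gamma x$ it introduces the Cauchy integrals $g_k(z)=\frac{1}{2\pi i}\int_\gamma \frac{\sum_j \frac{y_j}{\lambda_j-\xi}(\sum_m\overline{\beta_j^{(m)}}a_{\gamma,k,m}(\xi))}{z-\xi}\,d\xi$, which are holomorphic on $\C\setminus\gamma$. It then reduces the key condition to $\sum_{n\in N_F}\frac{x_n\overline{\beta_n^{(k)}}}{\lambda_n-z}=\sum_j x_j\overline{\beta_j^{(k)}}\,\frac{1}{2\pi i}\int_\gamma\frac{d\xi}{(\lambda_j-\xi)(z-\xi)}$ for $z\in\C\setminus F$, which is just Cauchy's formula. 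For the kernel, it rewrites $(I-P_\gamma)x$ with the integral taken over the cycle $\tau$ (as in \cite[Lemma 4.3]{GG3}) and argues the same way. Because this works for every $x\in H$ at once, it also removes the need for your density step and for closedness of the spectral subspaces.
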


\begin{proof}
In order to prove the boundedness of $P_\gamma$, it suffices to show that
		\begin{equation}\label{acotacion f(T)}
			\sumn \left| \sum_{k=1}^\infty \left( \frac{1}{2\pi i} \int_{\gamma}  \frac{ \sumj \frac{x_j}{\lambda_j-\xi}\left(\summ \overline{\beta_j^{(m)}}a_{\gamma,k,m}(\xi)\right)}{\lambda_n-\xi}        d\xi    \right)\al_n^{(k)}\right|^2 \leq C \norm{x}^2
		\end{equation}
		for some $C>0.$ The left-hand side of \eqref{acotacion f(T)} is smaller or equal than
$$ \sumn \left(\sum_{k=1}^\infty  \sumj |x_j| \int_{\gamma} \frac{ \left|\summ \overline{\beta_j^{(m)}}a_{\gamma,k,m}(\xi)\right|}{|\lambda_j-\xi||\lambda_n-\xi|}|d\xi|     \left|\al_n^{(k)}\right|\right)^2.$$
Recalling that $\gamma$ is a finite union of horizontal and vertical segments $s_\ell$, $\ell=1,\ldots,n$, whose endpoints have real and imaginary parts lying in $\Delta(T)$, and arguing as in the proof of \cite[Theorem 4.1]{GG3}, it follows that there exist constants $C_\ell>0$ such that
$$ \sumn \left(\sum_{k=1}^\infty  \sumj |x_j| \int_{s_\ell} \frac{ \left|\summ \overline{\beta_j^{(m)}}a_{\gamma,k,m}(\xi)\right|}{|\lambda_j-\xi||\lambda_n-\xi|}|d\xi|     \left|\al_n^{(k)}\right|\right)^2 \leq C_\ell\norm{x}^2$$ for $\ell=1,\cdots,n.$
Indeed, the proof in \cite{GG3} is carried out for vertical lines with endpoints outside $\sigma(T)$, but the same argument applies to shorter segments as well as to horizontal ones.

\noindent Finally, taking $C=\max\{C_\ell:\ell=1,\ldots,n\}$, we obtain \eqref{acotacion f(T)}, which yields the boundedness of $P_\gamma$.

\smallskip

Let us now prove that
$$\ran(P_\gamma) \subseteq H_T(\overline{\inte(\gamma)})
\quad \text{and} \quad
\ker(P_\gamma)\subseteq H_T(\overline{\exte(\gamma)}).
$$
For the containment $\ran(P_\gamma)\subseteq \Hint$, we apply the characterization of the spectral subspaces given in \cite[Theorem 2.2]{GG3}. Let $x=\sumn x_ne_n \in H$ and consider
$$y= P_\gamma x =\sum_{n\in N_F} x_ne_n+\sumn \sum_{k=1}^\infty \left( \frac{1}{2\pi i} \int_{\gamma} \frac{ \sumj \frac{x_j}{\lambda_j-\xi}\left(\summ \overline{\beta_j^{(m)}}a_{\gamma,k,m}(\xi)\right)}{\lambda_n-\xi}        d\xi    \right)\al_n^{(k)}e_n. $$
To prove that $y\in H_T(F)$, let us define for each $k\in \N$
		$$
        g_k(z) = \frac{1}{2\pi i} \int_{\gamma} \frac{ \sumj \frac{y_j}{\lambda_j-\xi}\left(\summ \overline{\beta_j^{(m)}}a_{\gamma, k,m}(\xi)\right)}{z-\xi} d\xi \qquad (z\in \C\setminus \gamma).
        $$
Arguing as in the proof of \cite[Theorem 4.2]{GG3}, one deduces  that each $g_k$ is a holomorphic function on $\C\setminus \gamma.$ Note that conditions $(i)$ and $(ii)$ in \cite[Theorem 2.2]{GG3} are also satisfied and to check $(iii)$, reasoning as in the proof of \cite[Theorem 4.2]{GG3}, it should be shown that
		$$ \sum_{n\in N_F} \frac{x_n\overline{\beta_n^{(k)}}}{\lambda_n-z} = \sumj x_j\overline{\beta_j^{(k)}} \frac{1}{2\pi i}\int_{\gamma}
		\frac{d\xi}{(\lambda_j-\xi)(z-\xi)}$$ for every $z\in \C\setminus F.$ But, at this point, it suffices to apply  Cauchy's integral formula to check that the  equality holds and hence, the containment $\ran(P_\gamma)\subseteq \Hint$ follows.

\medskip

Finally, to prove $\ker(P_\gamma)\subseteq H_T(\overline{\exte(\gamma)})$, we argue following \cite[Lemma 4.3]{GG3} and deduce that
        $$(I-P_\gamma)x = \sum_{n\in N_{F^c}} x_ne_n+\sumn \sum_{k=1}^\infty \left( \frac{1}{2\pi i} \int_{\tau} \frac{ \sumj \frac{x_j}{\lambda_j-\xi}\left(\summ \overline{\beta_j^{(m)}}a_{\gamma,k,m}(\xi)\right)}{\lambda_n-\xi}        d\xi    \right)\al_n^{(k)}e_n,$$
        where $\tau$ is the admissible cycle composed of $\partial ([a,b]\times [c,d])$ positively oriented and $\gamma$ taken with negative orientation. From here, it follows that
        $$
        \ran(I-P_\gamma) = \ker(P_\gamma) \subseteq \Hext,
        $$
        which completes the proof.
\end{proof}

	\begin{remark}\label{remark 5.6}
	    Note that if $x=\sumn x_ne_n$, the arguments in the proof of Theorem \ref{teorema definición f(T)} yield
        $$(I-P_\gamma)x = \sum_{n\in N_{F^c}} x_ne_n+\sumn \sum_{k=1}^\infty \left( \frac{1}{2\pi i} \int_{\tau} \frac{ \sumj \frac{x_j}{\lambda_j-\xi}\left(\summ \overline{\beta_j^{(m)}}a_{\gamma,k,m}(\xi)\right)}{\lambda_n-\xi}        d\xi    \right)\al_n^{(k)}e_n.$$
	\end{remark}

	\begin{proposition}\label{proposition idempotent producto}
Let $T$ be an operator as in Definition \ref{definition decomposability set}. There exists a measurable set $\tilde{\Delta}(T)\subseteq \Delta(T)$ with $m(\Delta(T)\setminus \tilde{\Delta}(T))=0$ such that, for every admissible grid $G\subseteq \Delta'(T)\times \Delta'(T)$ and every appropriate curve $\gamma$ whose forming segments have endpoints in $G$, the associated operator $P_\gamma$, defined in \ref{operador P}, satisfies
$$
P_{\gamma}(H_T(\overline{\exte(\gamma)}))=(I-P_\gamma)(H_T(\overline{\inte(\gamma)}))=\{0\}.
$$
Consequently, $P_\gamma$ is an idempotent.	
\end{proposition}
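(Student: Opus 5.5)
The plan is to prove the two vanishing statements separately and then obtain idempotency from Theorem \ref{teorema definición f(T)}. For the latter: for every $x\in H$ we have $P_\gamma x\in \Hint$, so the second identity gives $(I-P_\gamma)P_\gamma x=0$, that is, $P_\gamma^2=P_\gamma$.

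\textbf{Step 1: an integral formula for $P_\gamma$.} The main idea is to identify $P_\gamma x$, for $x$ in a spectral subspace, with a contour integral of the local resolvent. Suppose $x\in \Hext$. Then $x_T$ is holomorphic on $\inte(\gamma)$. The coefficient functions $g_k$ that appear in \eqref{operador P} are, via the characterization \cite[Theorem 2.2]{GG3} and the inversion formula $\sum_k\sum_j x_j a_{\gamma,k,j}(\delta_{k,n}+f_T^{(k,n)})=x_n$ of the preceding theorem, exactly the components of $x_T$ in the finite-rank-perturbation resolvent identity
\[
(\xi I-T)^{-1}=(\xi I-D_\Lambda)^{-1}+(\xi I-D_\Lambda)^{-1}U(\xi)(\xi I-D_\Lambda)^{-1},
\]
where $U(\xi)$ is built from $(I+Y_\gamma X_\gamma)^{-1}$. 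So I would first check that, off $\sigma(T)$,
\[
P_\gamma x=\frac{1}{2\pi i}\int_\gamma x_T(\xi)\,d\xi,
\]
interpreted coefficientwise with respect to $(e_n)$. Here the term $\sum_{n\in N_F}x_ne_n$ comes from the residues of $(\xi I-D_\Lambda)^{-1}x$ enclosed by $\gamma$.

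\textbf{Step 2: shrink the contour and take a limit.} Since $\gamma$ meets $\sigma(T)$, Cauchy's theorem cannot be applied directly on $\gamma$. Instead, I would replace $\gamma$ by nearby appropriate curves $\gamma_t\subset \inte(\gamma)$, obtained by moving each segment of $\gamma$ by $t$ towards the interior; their endpoints have coordinates $x_i\pm t$ and $y_j\pm t$.
\begin{itemize}
\item For $x\in\Hext$ and every $t>0$ with $\gamma_t\cap\sigma(T)$ avoiding $\sigma_T(x)$, Cauchy's theorem gives $\frac{1}{2\pi i}\int_{\gamma_t}x_T=0$.
\item The task is then to show $\int_{\gamma_t}x_T\to\int_\gamma x_T$ coefficientwise, and hence $P_\gamma x=0$.
\item Symmetrically, for $x\in\Hint$ I would use Remark \ref{remark 5.6}, which writes $(I-P_\gamma)x$ as the integral over the cycle $\tau$. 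Pushing $\gamma$ outward, $x_T$ is holomorphic between $\gamma$ and $\partial([a,b]\times[c,d])$, so the cycle integral over the shifted contour vanishes.
\end{itemize}

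\textbf{Step 3 (main obstacle): justifying the limit, which is where $\tilde\Delta(T)$ comes from.} The integrands involve $|\lambda_j-\xi|^{-1}|\lambda_n-\xi|^{-1}$ weighted by $|\al_n^{(k)}|$ and $|\beta_j^{(m)}|$. The finiteness condition in Definition \ref{definition decomposability set} controls these on the segment at abscissa (or ordinate) $\x$, but not uniformly in nearby parameters. My plan is:
\begin{enumerate}
\item Regard the segment integrals from the proof of Theorem \ref{teorema definición f(T)}, whose $\ell^2$ bounds come from \cite[Theorem 4.1]{GG3}, as functions of the real parameter $\x$.
\item Use the summability condition \eqref{summability condition} through \cite[Lemma 2.3]{GG3} to show that these functions are locally integrable in $\x$, by a Fubini/Tonelli argument on $\sum_{n,k}|\al_n^{(k)}|^2\,|\PR(\lambda_n)-\x|^{-1}$.
\item Define $\tilde\Delta(T)$ as the set of $\x\in\Delta(T)$ that are Lebesgue points of all these countably many functions. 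Equivalently, these are the points where the corresponding maximal functions are finite and the segment integrals depend continuously on $\x$ in the $L^1$-averaged sense. By the Lebesgue differentiation theorem this set has full measure.
\item At such $\x$, choose shifts $t_r\to0$ along which the $\gamma_{t_r}$-integrals converge to the $\gamma$-integral; it suffices to have a sequence, not all $t$. Dominated convergence in the $n$- and $k$-sums, using the uniform $\ell^2$ bound $C_\gamma$ from the preceding theorem, then gives convergence in $H$ (or at least weakly).
\end{enumerate}
This yields both $P_\gamma(\Hext)=\{0\}$ and $(I-P_\gamma)(\Hint)=\{0\}$, and idempotency follows as described at the start.
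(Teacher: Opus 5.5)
Your overall architecture matches the paper's. You approximate $\gamma$ from inside by curves whose associated integrals annihilate $\Hext$, pass to the limit, and let the exceptional null set arise from the limit. Your derivation of idempotency from $\ran(P_\gamma)\subseteq\Hint$ is also fine. However, the limiting step fails as you set it up.

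\textbf{The limiting step.} The function $s\mapsto\sum_{n,k}|\al_n^{(k)}|^2|\PR(\lambda_n)-s|^{-1}$ is \emph{not} locally integrable.
\begin{itemize}
\item By Tonelli, its integral over any interval containing some $\PR(\lambda_n)$ with $\al_n^{(k)}\neq0$ is $+\infty$, since $|\PR(\lambda_n)-s|^{-1}$ is not integrable near $\PR(\lambda_n)$. Your segment integrals carry the same singularities, for instance through their diagonal terms $j=n$.
\item These functions are only finite almost everywhere. That is exactly what \cite[Lemma 2.3]{GG3} extracts from the logarithmic condition \eqref{summability condition}, via a weak-$L^1$ argument.
\item So the Lebesgue differentiation theorem is unavailable. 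Every average over an interval containing such a $\PR(\lambda_n)$ is infinite, so the maximal functions are $+\infty$ wherever these points are dense. Your $\tilde\Delta(T)$ can then miss entire intervals and need not have full measure.
\item Even along a good sequence $t_r\to0$, dominated convergence needs bounds uniform in $t$, in particular for $(I+Y_{\gamma_t}X_{\gamma_t})^{-1}$. The constant $C_\gamma$ only concerns the fixed curve $\gamma$.
\end{itemize}
The paper uses Egorov's theorem instead. The a.e.\ convergent series converges uniformly on sets $\mathcal{X}_p\subseteq\Delta(T)$ with $m(\Delta(T)\setminus\mathcal{X}_p)<1/p$. It then sets $\tilde\Delta(T)=\bigcup_p\mathcal{X}_p'$, taking derived sets so that approximating curves can also have their vertices in $\mathcal{X}_{p_0}'$. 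This uniformity is what lets the argument of \cite[Lemma 4.4]{GG3} give $P_{\gamma_n}\to P_\gamma$ in the weak operator topology.

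\textbf{Steps 1--2.} Here $\sigma(T)\supseteq\Lambda'$ may have interior, so both $\gamma$ and $\gamma_t$ can meet $\sigma(T)$ in sets of positive length.
\begin{itemize}
\item For $x\in\Hext$, $\int_\gamma x_T$ need not be defined.
\item An identification of the integrand with $x_T$ valid only off $\sigma(T)$ does not link the vanishing integral $\int_{\gamma_t}x_T$ to the formula-based integrals over $\gamma_t$ that your limit is meant to control.
\item Making that link at points of $\gamma_t\cap\sigma(T)$ requires solving for $g_k=\langle x_T,v_k\rangle$ there. That needs invertibility of $I+Y_{\gamma_t}X_{\gamma_t}$, hence the coordinates of the vertices of $\gamma_t$ must lie in $\Delta(T)$.
\end{itemize}
The paper avoids local resolvents entirely. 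It takes the approximants $\gamma_n$ to be appropriate curves, so the theorem introducing \eqref{operador P} gives $\ran(P_{\gamma_n})\subseteq H_T(\overline{\inte(\gamma_n)})$. Since $P_{\gamma_n}\in\conm{T}$ and $\Hext$ is hyperinvariant, it gets $P_{\gamma_n}(\Hext)\subseteq\Hext\cap H_T(\overline{\inte(\gamma_n)})=\{0\}$.

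If you want to keep your Cauchy-theorem mechanism, you must first prove $\frac{1}{2\pi i}\int_{\gamma_n}x_T\,d\xi=P_{\gamma_n}x$ for $x\in\Hext$. At that point your argument becomes the paper's, with Cauchy's theorem replacing the commutant argument. You would still need the Egorov-based convergence.
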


\begin{proof}
	We will show that $P_\gamma(H_T(\overline{\exte(\gamma)}))=\{0\}$; the proof for the identity $(I-P_\gamma)(H_T(\overline{\inte(\gamma)}))=\{0\}$ is similar. First, following \cite[Lemma 2.14]{GG2} and \cite[Lemma 4.4]{GG3}, one may check that $P_{\gamma}(T) \in \conm{T}.$

\medskip

Now, let $(\gamma_n)_{n\in \N}$ be a sequence of appropriate curves for $T$ such that for each $n\in \N$
    \begin{equation}\label{propiedad curvas 1}
        \overline{\inte(\gamma_{n})}\cap\sigma(T) \subset \inte(\gamma_{n+1})\cap\sigma(T), \qquad \overline{\inte(\gamma_n)}\cap\sigma(T) \subsetneq \inte(\gamma)\cap\sigma(T)
    \end{equation}
    and
    \begin{equation}\label{propiedad curvas 2}
        \inte(\gamma)=\bigcup_{n=1}^\infty \inte(\gamma_n).
    \end{equation}
		
\medskip
		
We claim that $P_{\gamma_n}(T)(H_T( \overline{\exte(\gamma)}) = 0$ for every $n \in \N.$  Indeed, by  Theorem \ref{teorema definición f(T)}, $\ran(P_{\gamma_n})\subseteq H_T(\overline{\inte(\gamma_n)})$. Since $P_{\gamma_n}$ commutes with $T$, \cite[Proposition 1.2.16]{LN00} yields that $\Hext$ is invariant under $P_{\gamma_n}$, so $P_{\gamma_n}(\Hext)\subseteq \Hext.$ Thus, $P_{\gamma_n}(\Hext) \subseteq \Hext\cap H_T(\overline{\inte(\gamma_n)})=\{0\}$, as claimed.

\medskip

Therefore, to prove that $P_\gamma(\Hext)=\{0\}$, we construct a set $\tilde{\Delta}(T)$ such that for every appropriate curve $\gamma$ with endpoints in a grid $G \subset \Delta'(T)\times\Delta'(T)$, there exists a sequence of appropriate curves $(\gamma_n)_{n\in\mathbb{N}}$ satisfying \eqref{propiedad curvas 1} and \eqref{propiedad curvas 2}, and such that
$$
P_{\gamma_n} \to P_\gamma \quad \text{as } n\to\infty
$$
in the weak operator topology. This fact will imply that $P_\gamma(\Hext) = \{0\}$, as desired.

\medskip

For this purpose, let $p\in \mathbb{N}$. By Egorov's theorem, there exists a measurable subset $\mathcal{X}_p\subset \Delta(T)$ such that $m(\Delta(T)\setminus \mathcal{X}_p) < 1/p$ and
$$
\sum_n \sum_k
\left(|\alpha_n^{(k)}|^2+|\beta_n^{(k)}|^2\right)
\left(
\frac{1}{|\PR(\lambda_n)-\xi|}
+\frac{1}{|\Impart(\lambda_n)-\xi|}
\right)
$$
converges uniformly on $\mathcal{X}_p$. Observe that the sets $\mathcal{X}_p$ contains, at most, a countable number of isolated points. If we denote by $\mathcal{X}_p'$ the associated derived set, let us define
$$
\tilde{\Delta}(T) = \bigcup_{p\in \N} \mathcal{X}'_p,
$$
and note that $m(\Delta(T)\setminus \tilde{\Delta}(T)) = 0$.

Let $p_0 \in \N$ and consider $\gamma$ to be an appropriate curve with the end-points of its forming segments lying in $\mathcal{X}'_{p_0}\times \mathcal{X}'_{p_0}$. It is clear that there exists a sequence of appropriate curves $(\gamma_n)_{n\in \N}$ with end-points of its forming segments in $\mathcal{X}'_{p_0}\times \mathcal{X}'_{p_0}$ satisfying \eqref{propiedad curvas 1} and \eqref{propiedad curvas 2}. At this point, to prove that $P_{\gamma_n}$ converges to $P_\gamma$ in the weak operator topology, it is enough to follow the same arguments exposed in the proof of \cite[Lemma 4.4]{GG3}, with the obvious modifications.

\medskip

Finally, by Theorem \ref{teorema definición f(T)},
$$\ran(P_\gamma(I-P_\gamma)) \subseteq P_\gamma(\Hext) = \{0\},$$
so $P_\gamma$ is an idempotent, which yields Proposition \ref{proposition idempotent producto}.
\end{proof}

With Proposition \ref{proposition idempotent producto} at hand, denote by $\F'_T$ the subfamily of $\F_T$ consisting of appropriate cycles for $T$ given by finite unions of curves of the form $\partial([a,b]\times[c,d])$ together with appropriate curves $\gamma$ whose segments have endpoints in $\tilde{\Delta}(T)\times \tilde{\Delta}(T)$.

\medskip

The following theorem is the main result of this section; it builds on the previous results and extends the decomposability established in \cite{GG3} to super-decomposability:

\begin{theorem}
Let $T$ be an operator as in Definition \ref{definition decomposability set}, and let $G\subset \tilde{\Delta}(T)\times \tilde{\Delta}(T)$ be an admissible grid. For every appropriate curve $\gamma$ whose segments have endpoints in $G$,
\begin{equation}\label{identidad final}
    \ran(P_\gamma) = \Hint, \qquad \ker(P_\gamma) = \Hext.
\end{equation}
Consequently, $\F_T'$ is an exhaustive cutting family for $T$, and hence $T$ is super-decomposable.
\end{theorem}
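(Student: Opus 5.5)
We argue in three steps: first the identities \eqref{identidad final} for a single appropriate curve, then the passage to cycles in $\F'_T$, and finally the exhaustive cutting property, which gives super-decomposability through Theorem \ref{teorema super-decomposable}.

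\textbf{Step 1: the identities \eqref{identidad final}.} Fix an appropriate curve $\gamma$ with endpoints in $G$. By Theorem \ref{teorema definición f(T)}, $P_\gamma$ is bounded with
$$\ran(P_\gamma)\subseteq \Hint,\qquad \ker(P_\gamma)\subseteq\Hext.$$
By Proposition \ref{proposition idempotent producto}, $P_\gamma$ is an idempotent annihilating $\Hext$, and $I-P_\gamma$ annihilates $\Hint$.

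For the reverse inclusions, take $x\in\Hint$. Then $(I-P_\gamma)x=0$, so $x=P_\gamma x\in\ran(P_\gamma)$. Likewise, if $x\in\Hext$ then $P_\gamma x=0$, so $x\in\ker(P_\gamma)$. Hence \eqref{identidad final} holds.

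Since $P_\gamma$ is bounded, both $\Hint$ and $\Hext$ are closed, and $H=\Hint\oplus\Hext$. Their intersection is $H_T(\gamma)$, which is $\{0\}$ because it lies in $\ran(P_\gamma)\cap\ker(P_\gamma)$. Both summands are non-trivial: $\inte(\gamma)$ and $\exte(\gamma)$ meet $\sigma(T)$, and by Proposition \ref{proposicion spectral idempotent} the restrictions of $T$ have spectra in the respective closures. If, say, $\Hint=\{0\}$, then $\sigma(T)\subseteq\overline{\exte(\gamma)}$, a contradiction. Proposition \ref{proposicion spectral idempotent} then shows that $T$ admits a plain spectral cut along $\gamma$. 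The outer curve $\partial([a,b]\times[c,d])$ only plays the role of the identity, since it surrounds $\sigma(T)$.

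\textbf{Step 2: cycles in $\F'_T$.} Appropriate cycles in $\F'_T$ are admissible cycles built from curves with disjoint closed interiors, or complements of such curves inside the big rectangle. Proposition \ref{proposicion union curvas} handles the former, so $T$ admits plain spectral cuts along all of them. For nested configurations, such as an outer curve with holes, use Proposition \ref{proposicion interseccion curvas}. Here the interior is an intersection of the interior of one appropriate curve with the exteriors of others. Each exterior region corresponds to the complementary idempotent $I-P_{\gamma'}$, which plays the role of the cut along the reversed curve together with the big rectangle. This gives the projection as a product.

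\textbf{Step 3: exhaustiveness.} Given open $U,V$ with $\sigma(T)\subseteq U\cup V$, follow Example \ref{ejemplo 1}. Since $\tilde\Delta(T)$ has full measure, it is dense in $\R$, so we can choose a grid with nodes in $\tilde\Delta(T)\times\tilde\Delta(T)$ and mesh below $\delta/4$. Take $\sigma$ to be the union of the grid rectangles meeting $\overline{G_1}$. Its boundary is a finite union of axis-parallel polygonal curves with endpoints on the grid, which yields a cycle $\Gamma\in\F'_T$ with $\overline{\inte(\Gamma)}\cap\sigma(T)\subseteq\overline U$ and $\overline{\exte(\Gamma)}\cap\sigma(T)\subseteq \overline V$. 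If $\inte(\Gamma)$ or $\exte(\Gamma)$ misses $\sigma(T)$, the cover is trivially handled by $R=I$ or $R=0$ in the definition of super-decomposability. Theorem \ref{teorema super-decomposable} concludes.

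The main obstacle is Step 2: verifying that every boundary component produced by the grid construction is appropriate, and that the orientation and nesting bookkeeping reduces to Propositions \ref{proposicion union curvas} and \ref{proposicion interseccion curvas}. A grid square boundary component may touch another at a vertex, and there I would first try to perturb $\sigma$ by adding or removing single cells, which keeps the components Jordan.
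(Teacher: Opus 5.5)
Your Step 1 is exactly the paper's argument: the inclusions $\ran(P_\gamma)\subseteq\Hint$ and $\ker(P_\gamma)\subseteq\Hext$ from the construction of $P_\gamma$, combined with $(I-P_\gamma)(\Hint)=P_\gamma(\Hext)=\{0\}$ from Proposition~\ref{proposition idempotent producto}, give the reverse inclusions, and exhaustiveness and super-decomposability then follow via Proposition~\ref{proposicion union curvas}, the grid construction of Example~\ref{ejemplo 1} (using density of $\tilde{\Delta}(T)$) and Theorem~\ref{teorema super-decomposable}, which the paper invokes in one line. Your additional bookkeeping is more careful than the paper's own proof but does not change the route; this includes closedness and non-triviality of the summands, holes via the complementary idempotents $I-P_{\gamma'}$ and Proposition~\ref{proposicion interseccion curvas}, grid cells touching only at a vertex, and covers for which one side misses $\sigma(T)$.
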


\begin{proof}
Note that $\F_T'$ satisfies condition $(ii)$ of Definition \ref{definition exhaustive}. To verify condition $(i)$, it suffices to prove \eqref{identidad final}. We establish the first identity; the proof of the second is analogous.

By Theorem \ref{teorema definición f(T)}, $\ran(P_\gamma)\subseteq \Hint$. Let $x\in \Hint$ and write $x=P_\gamma x+(I-P_\gamma)x$. Since $\Hint$ is invariant under $P_\gamma$, we have $(I-P_\gamma)x\in (I-P_\gamma)\Hext=\{0\}$ by Proposition \ref{proposition idempotent producto}. Hence \eqref{identidad final} holds, and $\ran(P_\gamma)=\Hint$.

Therefore, $\F_T'$ is an exhaustive cutting family for $T$. By Theorem \ref{teorema super-decomposable}, $T$ is super-decomposable.
\end{proof}

Finally, we derive an explicit integral representation for the unconventional functional calculus for $T$ associated with $\mathcal{F}'_T$:

\begin{theorem}
Let $T$ be an operator as in Definition \ref{definition decomposability set},  $G\subset \tilde{\Delta}(T)\times \tilde{\Delta}(T)$ an admissible grid and $\gamma$ an appropriate curve for $T$ whose segments have endpoints in $G$. Let $F$ denote $\overline{\inte(\gamma)}$ and let $U$ be an open set containing $F$. If $f: U \rightarrow \mathbb{C}$ is a holomorphic function, for every $x=\sumn x_ne_n \in H$,
     $$
     f_\gamma(T)x = \sum_{n\in N_F} f(\lambda_n)x_ne_n + \sumn \sum_{k=1}^\infty \left( \frac{1}{2\pi i} \int_{\gamma} f(\xi) \frac{ \sumj \frac{x_j}{\lambda_j-\xi}\left(\summ \overline{\beta_j^{(m)}}a_{\gamma,k,m}(\xi)\right)}{\lambda_n-\xi}        d\xi    \right)\al_n^{(k)}e_n.
	$$
\end{theorem}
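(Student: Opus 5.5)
The approach mirrors the proof of Corollary \ref{corolario calculo funcinonal local resolvent}. We combine the definition $f_\gamma(T)=f(T\mid_{\ran(P_\gamma)})P_\gamma$ with the explicit formula \eqref{operador P} for $P_\gamma$. Denote by $R_f x$ the right-hand side of the claimed identity; $R_1=P_\gamma$ by \eqref{operador P}. The argument has three steps.

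\emph{Step 1: $R_f$ is bounded and maps into $\Hint$.} Since $f$ is holomorphic near $F$, it is bounded on $\gamma$ and on $\Lambda\cap F$. The estimate \eqref{acotacion f(T)} therefore carries over verbatim with an extra factor $\sup_\gamma|f|$, segment by segment as in the proof of Theorem \ref{teorema definición f(T)}. Hence $R_f$ is bounded. The proof that $\ran(P_\gamma)\subseteq\Hint$ uses only Cauchy's integral formula for $\frac{1}{2\pi i}\int_\gamma \frac{d\xi}{(\lambda_j-\xi)(z-\xi)}$. Inserting $f(\xi)$ there gives $\frac{1}{2\pi i}\int_\gamma \frac{f(\xi)\,d\xi}{(\lambda_j-\xi)(z-\xi)}$, which equals $\frac{f(\lambda_j)}{z-\lambda_j}$ for $\lambda_j\in\inte(\gamma)$ (and $0$ otherwise) when $z\notin F$. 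The characterization in \cite[Theorem 2.2]{GG3} then yields $\ran(R_f)\subseteq\Hint$.

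\emph{Step 2: identify $R_f$ on a dense set.} Take first $f$ a polynomial, or more generally $f$ holomorphic on a neighbourhood of $\overline{\inte(\tau)}$ with $\tau=\partial([a,b]\times[c,d])$. Then $f(T)$ exists and $f_\gamma(T)=f(T)P_\gamma$ by Theorem \ref{teorema calculo bien definido}. I would compute $f(T)P_\gamma$ using the resolvent formula for $T=D_\Lambda+K$ behind \cite[Section 3]{GG3}: for $\xi\notin\sigma(T)$,
$$
(\xi-T)^{-1}x=\sum_n \frac{x_n}{\xi-\lambda_n}e_n+\sum_n\sum_k\Big(\sum_j\frac{x_j}{\lambda_j-\xi}\sum_m\overline{\beta_j^{(m)}}a_{k,m}(\xi)\Big)\frac{\alpha_n^{(k)}}{\lambda_n-\xi}e_n ,
$$
with signs fixed as in \cite{GG3}. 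We have $f(T)=\frac{1}{2\pi i}\int_\tau f(\xi)(\xi-T)^{-1}d\xi$. Decomposing $\tau$ into $\gamma$ together with the cycle $\Gamma$ formed by $\tau$ and $-\gamma$, exactly as in Remark \ref{remark 5.6}, splits $f(T)$ into the $\gamma$-part $R_f$ and a $\Gamma$-part. The $\Gamma$-part maps into $\Hext$ by the argument for $\ker(P_\gamma)$ in Theorem \ref{teorema definición f(T)}. Since $f(T)=R_f+(f(T)-R_f)$, with the first summand in $\Hint$ and the second in $\Hext$, applying $P_\gamma$ and using Proposition \ref{proposition idempotent producto} gives $f(T)P_\gamma=P_\gamma f(T)=R_f$. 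The diagonal term $\sum_{n\in N_F}f(\lambda_n)x_ne_n$ arises from the residues of $f(\xi)/(\xi-\lambda_n)$ inside $\gamma$.

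\emph{Step 3: general $f$.} For general $f$ holomorphic on $U\supseteq F$, use Runge's theorem. This gives rational functions $f_j$ with poles off a neighbourhood $V\subset U$ of $F$ (which can be taken to contain $\sigma(T\mid_{\ran P_\gamma})$ and $\gamma$) such that $f_j\to f$ uniformly on $\overline V$. Each $f_j$ is holomorphic on a neighbourhood of $\sigma(T)$ once we first handle rational functions with poles off $\sigma(T)$; for poles inside $\inte(\tau)\setminus F$, Step 2 applies with $f(T)$ replaced by $f_j(T\mid_{\ran P_\gamma})$ directly. Theorem \ref{Teorema CF}(v) gives $(f_j)_\gamma(T)\to f_\gamma(T)$ in norm. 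The bound from Step 1 gives $\|R_{f_j}-R_f\|\le C\sup_{\gamma\cup F}|f_j-f|\to0$. Hence $f_\gamma(T)=R_f$.

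The main obstacle is Step 2: justifying the interchange of the infinite sums with the contour integral, and the resolvent expansion on $\tau$ and $\gamma$ where $\gamma$ meets $\sigma(T)$. I would handle it through the same absolute-convergence estimates of \cite[Theorem 4.1]{GG3} used for \eqref{acotacion f(T)}. The Runge step in Step 3 also needs care with poles lying between $\gamma$ and $\tau$.
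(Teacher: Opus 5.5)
Your proof is correct, and Step 2 is the paper's argument. The paper's operator $A$ is your $R_f$ and its operator $B$ is your $\Gamma$-part. It obtains $A+B$ by splitting the Dunford integral over $\partial([a,b]\times[c,d])$ using the resolvent expression from \cite[Lemma 4.3]{GG3}, and concludes $f_\gamma(T)=AP_\gamma+BP_\gamma=A$ from $\ran(A)\subseteq\Hint$ and $\ran(B)\subseteq\Hext$.

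You differ from the paper in two ways. First, the paper writes $AP_\gamma=P_\gamma A$ and $BP_\gamma=P_\gamma B$, which requires showing that $A$ and $B$ commute with $T$. You instead compute $P_\gamma f(T)$ and use $f(T)P_\gamma=P_\gamma f(T)$. This only needs $P_\gamma\in\{T\}'$ and the two annihilation identities of Proposition \ref{proposition idempotent producto}, so the commutation claims can be skipped. Second, and more important, your Step 3 has no counterpart in the paper and is actually needed. The paper's proof integrates $f$ over $\partial([a,b]\times[c,d])$ and evaluates $f(\lambda_n)$ for $n\in N_{F^c}$. As written, it therefore only covers $f$ holomorphic near the whole rectangle, not on an arbitrary open $U\supseteq F$.

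Step 3 should be simplified, though. You claim that for poles in $\inte(\tau)\setminus F$, ``Step 2 applies with $f(T)$ replaced by $f_j(T\mid_{\ran P_\gamma})$ directly.'' That is not justified, because Step 2 rests on a Dunford integral over a contour in $\rho(T)$ enclosing all of $\sigma(T)$. You need no poles at all:
\begin{itemize}
\item $\C\setminus F=\exte(\gamma)$ is connected, so there is a compact neighbourhood $L\subset U$ of $F$ with connected complement. For example, take the polynomially convex hull of $\{z:\mathrm{dist}(z,F)\le\varepsilon\}$ for small $\varepsilon$.
\item Runge's theorem then gives polynomials $p_j\to f$ uniformly on $L$, and Step 2 applies to each $p_j$.
\item Theorem \ref{Teorema CF}(v), used with $U$ replaced by the interior of $L$, gives $(p_j)_\gamma(T)\to f_\gamma(T)$ in norm.
\item The map $g\mapsto R_g$ is linear with $\|R_g\|\le\sup_F|g|+C'\sup_\gamma|g|$, by the estimate behind \eqref{acotacion f(T)}. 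Hence $R_{p_j}\to R_f$.
\end{itemize}

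There is also a sign slip in Step 1. For $\lambda_j\in\inte(\gamma)$ and $z\notin F$, the integral equals $f(\lambda_j)/(\lambda_j-z)$, not $f(\lambda_j)/(z-\lambda_j)$. This matches the identity checked in the paper for $f\equiv 1$.
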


\begin{proof}
    Recall that $f_\gamma(T) =  f(T\mid_{\ran(P_\gamma)})P_\gamma$. As $P_\gamma$ is the projection associated with the plain spectral cut along $\gamma$, it follows that $P_\gamma \in \biconm{T}.$ Denote by $A$ the operator given by
    $$
    A x= \sum_{n\in N_F} f(\lambda_n)x_ne_n + \sumn \sum_{k=1}^\infty \left( \frac{1}{2\pi i} \int_{\gamma} f(\xi) \frac{ \sumj \frac{x_j}{\lambda_j-\xi}\left(\summ \overline{\beta_j^{(m)}}a_{\gamma,k,m}(\xi)\right)}{\lambda_n-\xi}        d\xi    \right)\al_n^{(k)}e_n,
    $$
    for $x=\sumn x_ne_n \in H$.

    As in Theorem \ref{teorema definición f(T)}, $A$ is a bounded operator on $H$ commuting with $T$ and satisfying $\ran(A)\subseteq \Hint$. Hence $AP_\gamma=P_\gamma A=A$. Let $\tau$ be the admissible cycle consisting of $\partial([a,b]\times[c,d])$, positively oriented, together with $\gamma$ with negative orientation, and define
    $$
    B x= \sum_{n\in N_{F^c}} f(\lambda_n)x_ne_n + \sumn \sum_{k=1}^\infty \left( \frac{1}{2\pi i} \int_{\tau} f(\xi) \frac{ \sumj \frac{x_j}{\lambda_j-\xi}\left(\summ \overline{\beta_j^{(m)}}a_{\gamma,k,m}(\xi)\right)}{\lambda_n-\xi}        d\xi    \right)\al_n^{(k)}e_n,
    $$
for $x=\sumn x_ne_n \in H$. Again, $B$ is a bounded operator commuting with $T$ and satisfying $\ran(B)\subseteq \Hext$. Hence $BP_\gamma=P_\gamma B=0$. Moreover, since
    $$
    f(T\mid_{\ran(P_\gamma)})x = \frac{1}{2\pi i} \int_{\partial ([a,b]\times [c,d])}f(z) (z I-T)^{-1}x dz,
    $$
    for every $x\in H$, it follows that $(A+B)\mid_{\ran(P_\gamma)}=f(T\mid_{\ran(P_\gamma)})$, due to the expression for $(zI-T)^{-1}$ obtained in the proof of \cite[Lemma 4.3]{GG3}.

    Finally, we have,
    $$
    f_\gamma(T) = f_\gamma(T\mid_{\ran(P_\gamma)}) P_\gamma = (A+B)\mid_{\ran(P_\gamma)} P_\gamma = AP_\gamma + BP_\gamma  = A,
    $$
    which concludes the proof.
\end{proof}

\end{document}